\theoremstyle{definition}
\newtheorem{definition}{Definition}[section]
\theoremstyle{plain}
\newtheorem{theorem}[definition]{Theorem}
\newtheorem{lemma}[definition]{Lemma}
\newtheorem{remark}[definition]{Remark}
\newtheorem{note}[definition]{Note}
\newtheorem{example}[definition]{Example}
\theoremstyle{remark}
\begin{document}

\title[From plat closure to standard closure of braids and vice-versa]{Passing from plat closure to standard closure of braids in $\mathbb{R}^3$, in handlebodies and in thickened surfaces}

\author[P. Cavicchioli]{Paolo Cavicchioli}
\address{Dipartimento di Matematica,
Università di Bologna, Piazza di Porta S. Donato, 5, 40126, Bologna, Italy. }
\email{paolo.cavicchioli@unibo.it}
\urladdr{https://www.unibo.it/sitoweb/paolo.cavicchioli}

\author[S. Lambropoulou]{Sofia Lambropoulou}
\address{School of Applied Mathematical and Physical Sciences, 
National Technical University of Athens, 
Zografou Campus, 9 Iroon Polytechneiou st., 15772 Athens, Greece.}
\email{sofia@math.ntua.gr}
\urladdr{http://www.math.ntua.gr/~sofia}

\thanks{This research received support from the National Technical University of Athens, the University of Modena and Reggio Emilia and the University of Parma, as well as an STMS travel grant from the COST Action CA17139 - EUTOPIA, all of which we acknowledge thankfully.}

\keywords{standard closure of braids, plat closure of braids, classical knots and links, knots and links in the handlebody, knots and links in thickened surfaces, algorithms and complexity.}

\subjclass[2010]{57K10, 57K12, 57-08, 57-04}



\begin{abstract}
Given a knot or link in the form of  plat closure of a braid, we describe an algorithm to obtain a braid representing the same knot or link with the standard closure, and vice-versa. We analyze the three cases of knots and links: in \(\mathbb{R}^3\), in handlebodies and in thickened surfaces. We show that the algorithm is quadratic in the number of crossings and loop generators of the braid when passing from plat to standard closure, while it is linear when passing from standard to plat closure. 
\end{abstract}

\maketitle

\section*{Introduction} 

By classical results of Brunn and Alexander  \cite{brunn1892uber,alexander1923lemma}, any oriented knot or link in \(\mathbb{R}^3\) is isotopy equivalent to the standard closure of a braid in some braid group \(\mathcal{B}_{n}\). Furthermore, by a theorem of Markov \cite{markov1935freie}, conjugation in the braid groups and the stabilization move generate an equivalence relation in the set of all braids, such that equivalence classes of braids are in bijective correspondence with isotopy classes of oriented knots and links (from now on links). 

The Alexander and Markov theorems were used for the first time in 1984 by V.F.R.~Jones for constructing topological invariants for links described through standard closures of braids, via the surjections of braid group algebras to Templerley-Lieb and Hecke algebras and via linear traces on these algebras supporting the Markov property (see \cite{jones1987hecke} and references therein). 

The Jones construction has been since successfully adapted to other knot algebras and other algebraic/topological settings (see, for example, \cite{turaev1988yang} for the case of Yang-Baxter operators, \cite{wenzl1993braids} for invariants of 3-manifold via local traces, \cite{lambropoulou1994solid, lambropoulou1999knot} for the solid torus using the B-type Hecke algebras, or \cite{kauffman1998centrality} for the case of Hopf algebras, or again \cite{chlouveraki2020identifying, goundaroulis2017framization, flores2018framization} for invariants of framed and classical links from the Yokonuma-Hecke algebras and their Templerley-Lieb-type quotients). 
Moreover, by considering mixed link diagrams in  the 3-sphere, we have the extension of the Alexander and Markov theorems  for links in closed, connected, orientable (from now on c.c.o.) 3-manifolds described via integral or rational topological surgery \cite{lambropoulou1997markov, lambropoulou2006algebraic, diamantis2015braid}, as well as for links in a handlebody \cite{haring2002knot}. 

\begin{figure}[h!]
    \centering
    \includegraphics[width = .85\textwidth]{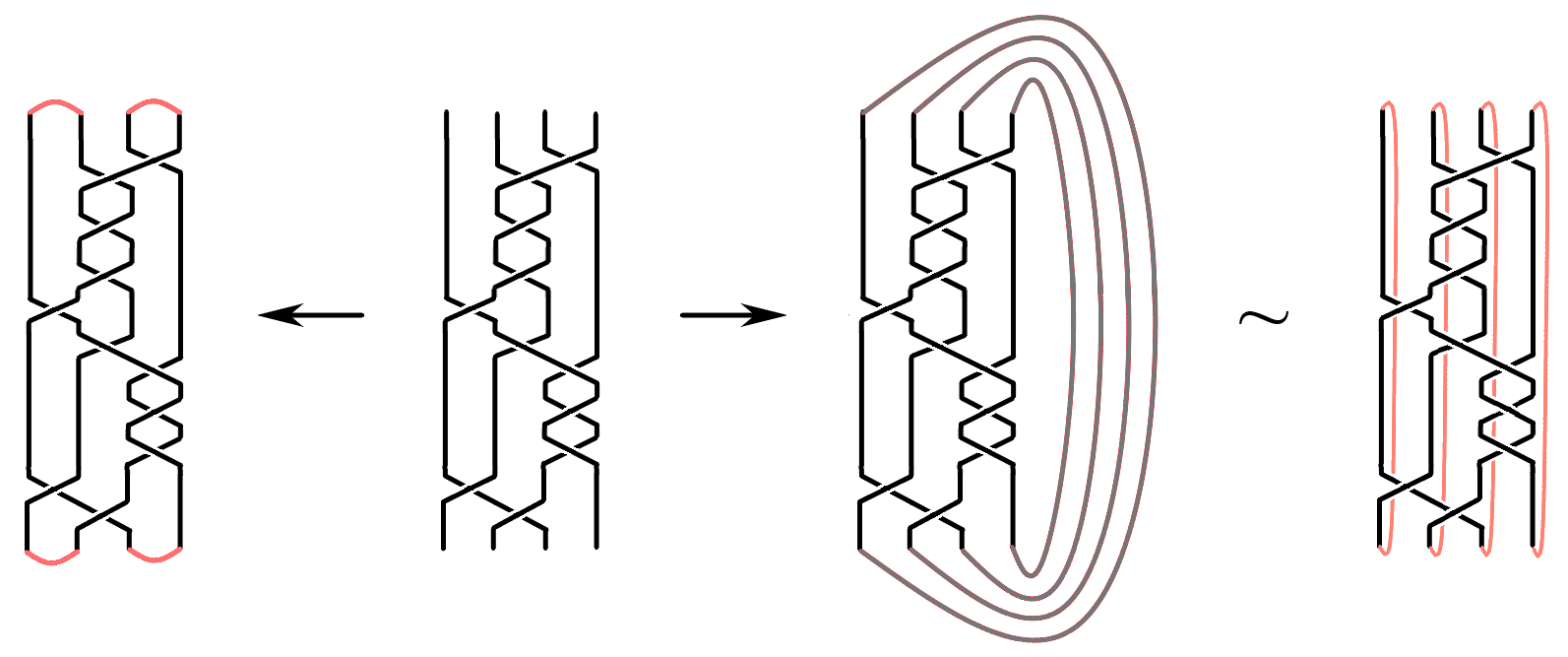}
    \caption{A classical braid closed in the plat  closure and in the  standard closure (with its isotopy to sliding the closing arcs to the back of the braid).}
    \label{fig:both_closure}
\end{figure}

The representation of links as plat closures of braids with an even number of strands (see Figure~\ref{fig:both_closure}) goes back to the works of Hilden \cite{hilden1975generators} and Birman \cite{birman1976stable}, who proved that any link in $\mathbb{R}^3$ may be represented as the plat closure of a braid. For example, all rational knots and links can be represented by a braid with 4 strands, closed via plat closure, leaving the last strand as the identity strand \cite{bankwitz1934viergeflechte}. Hilden \cite{hilden1975generators} and Birman \cite{birman1976stable} also  described a set of moves that connect braids having isotopic plat closures, a result analogous to the classical Markov equivalence. 

The plat closure of braids is more natural for constructing quantum invariants of knots and links and of c.c.o. 3-manifolds, since for 3-manifolds one needs to discard orientation. Then, the construction of a quantum invariant on a knot diagram is done by ‘reading off’ the value on the Morse diagram: one assigns linear maps, based on a solution to the Yang-Baxter equation, on the horizontal levels of the diagram (\cite{turaev1988yang}, \cite{reshetikhin1991invariants}, see also \cite{Garnerone2007QuantumAB} for a plat example). 

A lot of work has been done using the plat representation. For instance, in \cite{bigelow2002homological} Bigelow studied the possibility of computing the Jones polynomial of a link in terms of the action of a braid, representing the link as plat closure, over a homological pairing defined on a covering of the configuration space of \(n\) points into the \(2n\)-punctured disc. This result was analyzed in recent years also from a computational point of view \cite{Garnerone2007QuantumAB, jordan2008estimating} and in order to compute other link invariants, such as the Conway potential function \cite{yun2011braid} or other polynomial invariants \cite{garnerone2006quantum} and equivalence classes of links \cite{cavicchioli2021algorithmic}. 

From the above we derive that it would be very meaningful and useful to have in hands an algorithm for passing from one type of closure to the other and vice versa. Indeed, the first goal of this article is to give an algorithmic way to pass from the plat closure representative of a link in \(\mathbb{R}^3\) to its representation as the standard closure of a braid and vice-versa. See Section~\ref{section:main_result}. The passage from standard closure to plat closure is straightforward, following the isotopy depicted in the rightmost part of Fig.~\ref{fig:both_closure}. 
 The  most difficult is the passage from plat to standard closure. For this we introduce the \textit{\(S\)-move} for braiding upward oriented arcs in crossings, view Fig.~\ref{fig:s_move_intro}. The \textit{\(S\)-move}  is a combination of Birman's stabilization move to the case of oriented plats and Lambropoulou's braiding move (\cite{LaPhD}, see also \cite{lambropoulou1997markov}) for an alternative proof of the Alexander theorem and for extending the result to other 3-manifolds. 
\begin{figure}[h!]
    \centering
    \includegraphics[width = .3\textwidth]{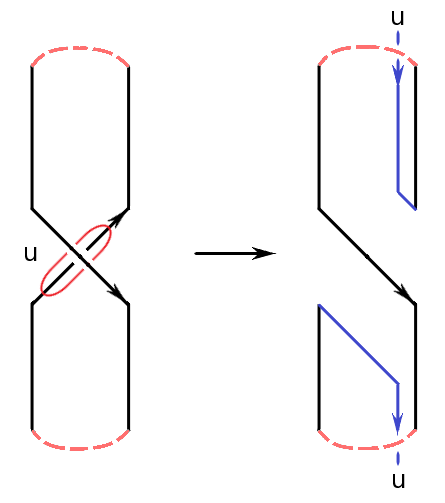}
    \caption{An abstraction of the \(S\)-move. }
    \label{fig:s_move_intro}
\end{figure}
\noindent For both directions we give closed algebraic representations, which are used in the implementation of the algorithm solving the problem.

Furthermore, the  study of links in handlebodies, thickened surfaces and c.c.o. 3-manifolds is  a fertile soil of research, with many researchers generalizing the results obtained in \(\mathbb{R}^3\) in the more general setting of 3-manifolds. 

We recall that a c.c.o. 3-manifold can be constructed via surgery along a framed link in $S^3$ and also via `gluing’ of two handlebodies (Heegaard decomposition). The surgery description of a c.c.o. 3-manifold is naturally associated to the standard closure of braids, while the handlebody decomposition is naturally associated to the plat closure (because representation of links via plat closures of braids in c.c.o. 3-manifolds uses braids in thickened surfaces as the starting point \cite{bellingeri2012hilden, cattabriga2018markov}, and a thickened surface can be viewed as the interface of the two handlebodies). Further, using Heegaard decompositions, in \cite{doll1993generalization} Doll introduced the notion of $(g,b)$-decomposition or generalized bridge decomposition for  links in a c.c.o. \(3\)-manifold, opening the way to the study of links in 3-manifolds via surface braid groups and their plat closures \cite{bellingeri2012hilden, cattabriga2018markov}. 

Because of the above, in the second part of the paper we extend our algorithms of transitioning from the one closure of braids to the other closure  to the cases of knots and links in handlebodies and thickened surfaces. See Sections~\ref{Handlebody} and~\ref{thickened}. We are especially interested in this last case, as another goal would be to extend our algorithms to the case of c.c.o. 3-manifolds and analyze also the action of this translation in terms of braid equivalences for isotopic knots and links. 

We also produce a C++ program that implements our algorithms; the program has linear computational complexity on the number of crossings and loop generators when passing from standard to plat closure, while it has quadratic computational complexity on the number of crossings when passing from plat to standard closure. 

The paper is structured as follows. In Section \ref{section:main_result} we present concisely our main results. 
In Section~\ref{Preliminaries} we  recall some notions and results about standard closure of braids and plat closure of braids. 
In Section \ref{from_standard} we describe the first part of the main result, passing from the representation with standard closure to plat closure. 
In Section \ref{construction} some useful constructions, needed for the proof of the main result, are described. 
Section \ref{main_result} contains the proof of the main result of the article, that is the algorithm to pass from the representation with plat closure to the one using standard closure in the setting of links in \(\mathbb{R}^3\). 
In Sections \ref{Handlebody} and \ref{thickened} the analogous results are constructed for links in handlebodies and thickened surfaces. 
Finally, in Section \ref{Algorithm} we present the main steps of the C++ program which reproduces the main results. 

Our results could apply to enhance the computation of link invariants, such as the Jones polynomial, for knots and links in 3-manifolds, and that, in general, they could have important implications in the fields of applied and computational topology.

\section{Main results}\label{section:main_result}

If we take an element of the braid group on \(2n\) strands, \(\mathcal{B}_{2n}\), and close it via the standard closure, we will  obtain a link, which in general will not be isotopic to the link that we would obtain using the plat closure. As stated in the Introduction, each representation  has its own advantages in theory and applications, so it would be useful to understand how to pass from one representation to the other. This is precisely the subject of this paper. Our result is, concisely, the following:

\begin{theorem}\label{teo_main_generale}
Given a link \(L\), in \(\mathbb{R}^3\) (or \(S^3\)), in a handlebody \(H_g\) or in a thickened c.c.o. surface of genus \(g\), \(N(\Sigma_g)\), represented by a braid \(B\) with plat closure, it is possible algorithmically to generate a braid \(\beta\) which represents an equivalent link but with standard closure and vice-versa. The algorithm from plat closure to standard closure has a computational complexity of \(O(N^2)\) where \(N\) is the number of crossings present in the braid \(B\), while it has a computational complexity of \(O(M)\), where \(M\) is the number of crossings present in the braid \(\beta\), when going from standard to plat closure. 
\end{theorem}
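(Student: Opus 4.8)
The plan is to prove the theorem constructively, establishing each of the two directions (standard-to-plat and plat-to-standard) separately and in each of the three ambient settings ($\mathbb{R}^3$, handlebodies, thickened surfaces), then tracking the complexity of the resulting algorithms. First I would handle the easy direction, passing from standard to plat closure. Given a braid $\beta \in \mathcal{B}_m$ presented as a word in the Artin generators, I would realize the isotopy illustrated on the right of Figure~\ref{fig:both_closure}: the standard closing arcs are slid around to the back of the braid box, and this sliding is absorbed into the braid by appending a fixed pattern of crossings that connects adjacent strand-pairs, thereby converting the closure type without changing the link type. Since each closing arc contributes a bounded number of generators and there are $O(m)$ strands, the resulting word has length $O(M)$ in the original crossing count $M$, giving the claimed linear complexity. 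I would write down the explicit algebraic substitution on words so that the bound is transparent.

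The substantive work is the reverse direction, plat-to-standard. Here I would orient the plat-closed diagram and observe that, after orientation, some arcs of the braid run ``downward'' in a way incompatible with a standard braided form; the goal is to re-braid these pieces so that every strand is coherently oriented and the diagram becomes a genuine closed braid. The engine for this is the \emph{$S$-move} announced in the Introduction (Figure~\ref{fig:s_move_intro}), which combines Birman's stabilization for oriented plats with Lambropoulou's braiding move. Concretely, I would scan the diagram for each crossing (or each maximal upward-oriented arc) that violates the braiding condition and apply an $S$-move to convert the offending arc into standard braid form at the cost of introducing new strands and a controlled number of crossings. Proving correctness amounts to showing that (i) each $S$-move preserves the isotopy class of the represented link, and (ii) the procedure terminates with a fully braided diagram; I would argue termination via a monotone complexity measure (e.g.\ the number of non-braided crossings strictly decreases, or a lexicographic potential on the Morse data of the plat decreases with each move).

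For the complexity bound in this direction, I expect the quadratic factor to arise because each application of the $S$-move can introduce additional crossings that themselves must be examined and possibly braided: with $N$ initial crossings, the braiding of a single arc may interact with $O(N)$ others, and there are $O(N)$ arcs to process, yielding $O(N^2)$ total elementary operations. I would make this precise by giving the explicit algebraic description of the $S$-move as a word-substitution rule and bounding, at each of the $O(N)$ stages, the number of generators touched by $O(N)$.

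The main obstacle will be the correctness and termination of the plat-to-standard algorithm: I must verify that the $S$-move, as defined on oriented diagrams, always yields a valid closed-braid word and that no infinite cascade of new non-braided crossings can occur. I would address this by choosing a strict well-ordering on the diagram's configuration that the $S$-move provably decreases, so that termination follows and the number of stages is bounded by $O(N)$. For the handlebody and thickened-surface cases I would then add the fixed ``loop'' (or surface) generators to the braid alphabet, check that the $S$-move interacts correctly with these generators (they behave as additional identity-like strands with respect to orientation), and observe that the same complexity analysis goes through with $N$ now counting crossings among both the Artin and the loop generators, as stated in Theorem~\ref{teo_main_generale}.
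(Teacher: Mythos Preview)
Your overall strategy matches the paper's: the standard-to-plat direction is the isotopy of Figure~\ref{fig:both_closure} implemented as a word substitution, and the plat-to-standard direction orients the plat and applies $S$-moves to eliminate wrongly oriented arcs. However, two points of your plan diverge from what actually happens in the paper and should be corrected.

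First, your account of the $O(N^2)$ bound is based on the wrong mechanism. You write that ``each application of the $S$-move can introduce additional crossings that themselves must be examined and possibly braided,'' and you therefore worry about termination and propose a well-ordering argument. In the paper's construction this never occurs: the $S$-move is defined precisely so that the new strands it creates are oriented downward and lie entirely over or entirely under the rest of the diagram, hence every new crossing is already of type $a$ and requires no further braiding (see the remark after Definition~\ref{def:s_move}). Termination is thus immediate; one $S$-move per non-$a$ crossing suffices, and no cascade can form. The quadratic cost arises elsewhere: when several $S$-moves are performed in the same column, the over/under data (the signs $\delta_i$ in the explicit formulae $C_a,C_b,C_c,C_d$) of each new strand depend on all the other crossings in that column, so writing out the replacement word for one crossing requires inspecting up to $N-1$ others. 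In the worst case (all crossings in a single column) this gives $N(N-1)$ checks.

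Second, for the handlebody and thickened-surface cases, the loop generators are not quite ``identity-like strands'' that the $S$-move passes over for free. A loop generator on an upward-oriented strand must first be \emph{twisted} (Figures~\ref{fig:twist_a} and~\ref{fig:twist_b}) so that its orientation reverses and its sign flips; this twist produces a new $d$-type crossing in the $0$-th column, which is then eliminated by an $S$-move. The paper shows this additional processing is linear in the number of loop generators, so the overall complexity remains governed by the $\sigma_i$ crossings; but you should not expect it to be automatic.

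For the easy direction, note also that the linear bound comes from the per-crossing substitution $\sigma_j^{k}\mapsto \sigma_{2j-1}^{-1}\sigma_{2j}^{k}\sigma_{2j-1}$, not from counting closing arcs: the number of strands $m$ is independent of the number of crossings $M$, so your phrasing ``there are $O(m)$ strands, the resulting word has length $O(M)$'' does not by itself justify the bound.
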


As we shall see, given a link represented by a braid in standard closure in any setting, it is straightforward to produce a braid in plat form representing the same isotopy class of the link. It is more elaborate to find the opposite one, passing from plat to standard closure of a braid. In what follows we will prove  Theorem~\ref{teo_main_generale} separately in every topological setting. We start from the easy way round, then, defining some useful constructions, we obtain the results for the three settings of \(\mathbb{R}^3\), handlebodies and thickened surfaces in sections \ref{main_result}, \ref{Handlebody} and \ref{thickened} respectively. 

We will usually employ capital letters for braids in plat form and small Greek letters for braids with standard closure.

\section{Preliminaries} \label{Preliminaries}

\subsection{Standard closure of braids}

Let \(\mathcal{B}_{n}\) be the Artin braid group \cite{artin1947theory}. It is defined as the fundamental group of the unordered configuration space of \(n\) points on the real plane, i.e. a group whose elements are equivalence classes of \(n\) paths \(\Psi = (\psi_1, \dots, \psi_n), \psi_i : [0,1] \rightarrow \mathbb{R}^3\), going from a set of \(n\) points of the plane to itself, and never intersecting (\(\psi_i(t) \neq \psi_j(t), \forall t \in [0,1], i,j\in {1, \dots, n}, \ i<j\)). The $\psi_i$'s are the {\it strands} of the braid. The middle illustration of Fig.~\ref{fig:both_closure}
 shows an example of a braid in \(\mathcal{B}_{4}\). 
 
    In the classical presentation, \(\mathcal{B}_{n}\) is generated by elementary braids \(\sigma_i, \ i\in {1, \dots, n-1}\), which exchange strands \(i\) and \(i+1\) as in Fig.~\ref{fig:braid_sigma_i}, with relations: 
\begin{align*}
   \sigma_i \sigma_j & = \sigma_j \sigma_i, \qquad \text{if } |i-j|\geq 2\\
   \sigma_i \sigma_{i+1} \sigma_i & = \sigma_{i+1} \sigma_i \sigma_{i+1}. \\
\end{align*}
For further information concerning the braid group, the reader could refer to \cite{birman2005braids, kassel2008braid}. 

\begin{figure}[h!]
    \centering
    \includegraphics[width = .55\textwidth]{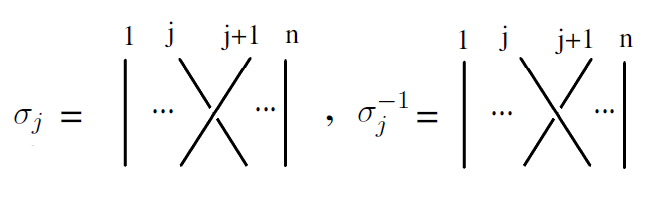}
    \caption{The \(\sigma_j\) and \(\sigma_j^{-1}\) generators.}
    \label{fig:braid_sigma_i}
\end{figure}



In order for a braid to represent a link we have to `close' it, for example identifying the starting set of points on the plane with itself. This operation can be viewed as connecting the top endpoints to their corresponding bottom endpoints with simple unlinked arcs, like in the right-hand illustration of Fig.~\ref{fig:both_closure}. This is called \emph{standard closure} of a braid. It is important to note that this closure naturally gives rise to an oriented link: indeed, if we consider the strands of the braid to be oriented downward, then all closing arcs will be oriented upward and the whole orientation is coherent. Conversely, by results of H.~Brunn \cite{brunn1892uber} and J.W.~Alexander \cite{alexander1923lemma}: 
\begin{theorem}[The Alexander theorem]
Every oriented link in \(\mathbb{R}^3\) can be represented as the standard closure of a braid in some \(\mathcal{B}_{n}\). 
\end{theorem}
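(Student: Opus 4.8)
The plan is to argue by the classical braiding approach, in the spirit of Alexander's original threading argument and of the braiding move of Lambropoulou that the paper invokes. First I would fix a generic planar diagram $D$ of the oriented link $L$ together with a point $O$ in the complement of $D$, serving as the projection of a braid axis (the axis being the line through $O$ perpendicular to the plane of the diagram). Using polar coordinates $(r,\theta)$ centered at $O$, I would call an arc of $D$ an \emph{up-arc} if, traversing it in the direction induced by the orientation of $L$, the angular coordinate $\theta$ is locally decreasing, and a \emph{down-arc} otherwise. The key observation is that if $D$ has no up-arcs — that is, every strand winds monotonically in one angular direction around $O$ — then cutting the plane along a ray emanating from $O$ unrolls $D$ into a braid $\beta\in\mathcal{B}_n$, where $n$ is the number of intersections of $D$ with the cutting ray, and the standard closure of $\beta$ recovers $L$. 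Thus the entire problem reduces to eliminating the up-arcs.

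Second, I would remove up-arcs one at a time. Given an up-arc $a$, I would first subdivide it so that its interior contains no crossings, and then replace $a$ by sliding it across the axis: the arc is lifted off the projection plane, dragged over the point $O$, and rejoined to its endpoints going the ``long way'' around $O$ in the correct angular direction. This is exactly the content of Lambropoulou's braiding move, and geometrically it is an ambient isotopy of $L$ in $\mathbb{R}^3$, since the arc can be routed either entirely above or entirely below the strands it passes over, using the third dimension so that no forbidden intersections are created and the crossing data is preserved. After finitely many such moves the diagram is free of up-arcs and the first paragraph applies.

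Third, I would have to dispatch the two points that make the argument nontrivial. The first is \textbf{termination}: subdividing an up-arc to isolate its crossings could in principle introduce new up-arcs, so the induction must be carried on a quantity that genuinely decreases — typically one counts the up-arcs and always resolves an outermost one, arguing that correcting it never reintroduces arcs already fixed. The second, and the real content, is checking that each braiding move is realized by an honest isotopy of $L$ rather than a mere diagrammatic rearrangement: one must verify that the over/under routing of the thrown arc is globally consistent with the rest of the diagram.

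I expect this last point to be the main obstacle, namely organizing the up-arc removal so that it simultaneously terminates and is visibly an isotopy of $L$, since the bookkeeping of which strands each up-arc must pass over, together with the monovariant guaranteeing that the number of up-arcs decreases, is exactly where Alexander's and Lambropoulou's arguments demand care. By contrast, the passage between a braided diagram and an element of $\mathcal{B}_n$, and back to $L$ via the standard closure, is a direct unrolling along the cutting ray and requires no further work.
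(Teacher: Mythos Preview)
The paper does not actually prove this theorem: it is stated in the Preliminaries section as a classical result of Brunn and Alexander, with a citation but no argument, so there is no ``paper's own proof'' to compare against. Your outline follows the standard route (Alexander's threading argument, in the form of Lambropoulou's braiding move that the paper cites elsewhere), and as a high-level plan it is correct.

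That said, since you framed this as a proposal rather than a finished proof, let me flag where your sketch would need tightening if you were to write it out in full. Your termination paragraph is too vague: ``always resolve an outermost one'' is not quite the right monovariant, and in Lambropoulou's treatment one does not subdivide up-arcs to be crossing-free but rather arranges that each up-arc lies entirely over or entirely under the rest of the diagram (by first subdividing at crossings), so that the throwing move can be done uniformly above or below without any global consistency check. With that preparation the ``real content'' you worry about in your third paragraph essentially disappears: each up-arc is replaced by a pair of down-arcs that are manifestly entirely over or entirely under, so the isotopy is local and evident, and the number of up-arcs strictly decreases with no new ones created. Your version, where you first make up-arcs crossing-free and only then throw them, forces you to decide over/under at the moment of throwing, which is why you perceive a global obstruction that the cleaner setup avoids.
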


Of course, different braids (also with a different number of strands) can be closed to give the same (oriented) link up to isotopy. A theorem by A.A.~Markov \cite{markov1935freie} states that  two braids have isotopic closures if and only if they can be connected by a finite sequence of conjugations and stabilization moves ($
\beta \in \mathcal{B}_{n} \sim \beta \sigma_{n}^{\pm{1}} \in \mathcal{B}_{n+1}$).  
In \cite{lambropoulou1997markov} another version of the Markov theorem is proved, with only one equivalence move: the \(L\)-move. 

The Markov theorem opened the possibility of studying the problem of knot and link classification via the Artin braid groups, and this possibility was exploited successfully by V.F.R.~Jones with the discovery of the homonymous Jones polynomial.




\subsection{Plat closure of braids}

One can also close a braid in a different way. Suppose that the braid has an even number of strands, i.e. it belongs to some \(\mathcal{B}_{2n}\). One can close the open ends by connecting adjacent top and bottom endpoints using simple, unlinked arcs (see left-hand illustration of Fig.~\ref{fig:both_closure}). This procedure is called \textit{plat closure} of a braid, see \cite{hilden1975generators}, \cite{birman1976stable} and references therein. 

It is important to note that the plat closure of a braid is not compatible with the natural orientation of the braid, as it was the case with the standard closure, since the short closing arcs would connect two downward strands. So, the result of  the plat closure of a braid is an unoriented knot or link, which will be referred to as \textit{plat}.

In \cite{birman1976stable} J.~Birman proves the analogue of the Alexander theorem for the plat closure, that is: 

\begin{theorem}[Birman] \label{platbraiding}
Every link can be represented as the plat closure of a braid in some \(\mathcal{B}_{2n}\). 
\end{theorem}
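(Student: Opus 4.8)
The plan is to put the link into \emph{bridge position} with respect to a height function and then read a plat presentation off the resulting diagram. First I would isotope $L$ inside $\mathbb{R}^3$ (or $S^3$) so that the restriction to $L$ of the vertical coordinate $z$ is a Morse function. Since $L$ is a disjoint union of circles, the only critical points of $z|_L$ are local maxima and local minima, and along each component these alternate; in particular the total number of maxima equals the total number of minima, say $n$. The target configuration is one in which all $n$ maxima lie on a common top level and all $n$ minima on a common bottom level, so that the part of $L$ caught between the two levels is free of critical points.

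To reach that configuration I would perform the \emph{rearrangement}: pull every maximum straight up to the level $z=1$ and push every minimum straight down to the level $z=0$. The point is that any arc of $L$ joining two critical points that are consecutive along a component is already monotone in $z$ (it runs from a maximum down to the next minimum, with no critical point in between), so raising the maxima and lowering the minima merely stretches these arcs without introducing new local extrema and without destroying monotonicity. After this move every arc lying in the open slab $0<z<1$ is strictly monotone in $z$, hence transverse to each horizontal level, so the slab contains an honest braid on $2n$ strands; the top level carries $n$ caps (the former maxima) and the bottom level carries $n$ cups (the former minima).

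It then remains to bring the caps and cups into the standard plat pattern, in which a cap joins \emph{adjacent} endpoints. A cap whose two endpoints sit at non-adjacent positions $i<j$ can be slid so that the endpoint at $j$ moves left to position $i+1$, dragging its strand across the intervening ones; this only inserts extra crossings into the braid below and keeps the slab a braid. Performing this for every cap and every cup arranges the caps over the pairs $(1,2),(3,4),\dots$ and likewise for the cups, which is exactly the plat form of Fig.~\ref{fig:both_closure}. Thus $L$ is exhibited as the plat closure of a braid $B\in\mathcal{B}_{2n}$. (Alternatively one could start from the Alexander theorem, realizing $L$ as the standard closure of a braid, and turn the standard closing arcs into cups and caps by the same sliding isotopy.)

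The step I expect to require the most care is the rearrangement of the critical levels, namely the verification that all maxima can be raised above all minima without ever creating a new critical point of $z|_L$ or forcing a strand to meet a level non-transversally. This is the Morse-theoretic heart of the argument; once it is in place, the standardization of the cup/cap pattern and the count of how many crossings the sliding introduces are routine bookkeeping.
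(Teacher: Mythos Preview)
Your proposal is correct and follows essentially the same approach as the paper's sketch: pull every local maximum to the top and every local minimum to the bottom so that the region in between is a braid on $2n$ strands. The paper phrases this in terms of a planar diagram with no horizontal arcs rather than a Morse height function in $\mathbb{R}^3$, but the idea is identical, and it likewise notes the alternative route through the Alexander theorem (Theorem~\ref{alternativeplatbraiding}) that you mention at the end.
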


The idea of the proof is simple: in a projection plane equipped with top to bottom direction, consider a knot or link diagram with no horizontal arcs. Then pull by an isotopy every local maximum to the top and every local minimum to the bottom. The result will be a plat. See also  Theorem~\ref{alternativeplatbraiding} for an alternative proof.

We further point out that, one can extend the notion of plat closure also to braids with an {\it odd number of strands} as follows: 

\begin{definition}\label{odd_extension_R3} \rm 
Let \(\beta \in \mathcal{B}_{2n-1}\). We consider \({\beta}'\) to be the image of \(\beta\) under the natural embedding $\phi_{2n-1} : \mathcal{B}_{2n-1} \hookrightarrow \mathcal{B}_{2n}$, and we define the plat closure of \(\beta\) as the plat closure of \({\beta}'\) with the procedure described above. In practice, we add an extra strand in the far right of the braid \(\beta\). 
\end{definition} 
\noindent In the above definition, one could consider, analogously, the embedding $\phi_{i}$ of \(\mathcal{B}_{2n-1}\) into \(\mathcal{B}_{2n}\), whereby an extra free strand is added in the braid between the $i$th and the $(i+1)$st strands. This extra strand could run equally entirely over or entirely under the rest of the braid, as the two resulting plats would be isotopic, see Fig.~\ref{fig:pass_in_front}. We point out, though, that the plats resulting from different embedding maps will, in general, be non-isotopic.  

Since its importance in the equivalence theorem of links represented by plat closure of braids,  we  recall a subgroup of the braid group, the {\it Hilden braid subgroup}  \cite{hilden1975generators}, \(K_{2n}\), defined  as the subgroup of \(\mathcal{B}_{2n}\) generated by equivalence classes of homeomorphisms of \(\mathbb{R}^3_+\) leaving the set of closure arcs invariant on its boundary. 
In \cite{hilden1975generators} Hilden proves that \(K_{2n}\) is finitely generated, and in \cite{birman1976stable} Birman finds the presentation with the smallest number of generators and in terms of elements of $\mathcal{B}_{2n}$: 
 $$
 \{\sigma_1; \ \sigma_2 \sigma_1^2 \sigma_2; \ \sigma_{2i} \sigma_{2i-1} \sigma_{2i+1} \sigma_{2i}, \ 1\leq i \leq n-1\}.
 $$

\noindent Furthermore, Birman also proves in \cite{birman1976stable} an analogous result to the Markov theorem, concerning equivalence between plat closures of braids, namely two braids, with \(2n\) and \(2m\) strands, represent equivalent links via plat closure if there exist \(t \geq \max(n,m)\) such that the two elements: \[B_i' = B_i \sigma_{2n_i} \sigma_{2n_i+2} \dots \sigma_{2m-2} \in \mathcal{B}_{2m}, \qquad m > t, \> i = 1, 2, \> \] are in the same double coset of \(\mathcal{B}_{2m}\) modulo the Hilden subgroup \(K_{2m}\). 

\section{The \texorpdfstring{$\mathbb{R}^3$}{TEXT} case}

\subsection{From standard closure to plat closure of braids} \label{from_standard}

Let \(L\) be a link in \(\mathbb{R}^3\) and \(\beta \in \mathcal{B}_{m}\) be a braid which represents \(L\) via standard closure. We want to produce a new braid \(B \in \mathcal{B}_{2n}\) which represents the same link but via the plat closure. It is easy to produce \(B\) from \(\beta\): in fact, it can be done by pulling every \(i\)-th closing arc by isotopy  to the left and behind the braid up to the right of the \(i\)-th strand, becoming the \((2i-1)\)-th and \(2i\)-th strands respectively in the plat representation. Fig.~\ref{fig:L-C} is an abstraction of the transformation, while the last two illustrations of  Fig.~\ref{fig:both_closure} demonstrate an example of our trick. 

\begin{figure}[h!]
    \centering
    \includegraphics[width = .65\textwidth]{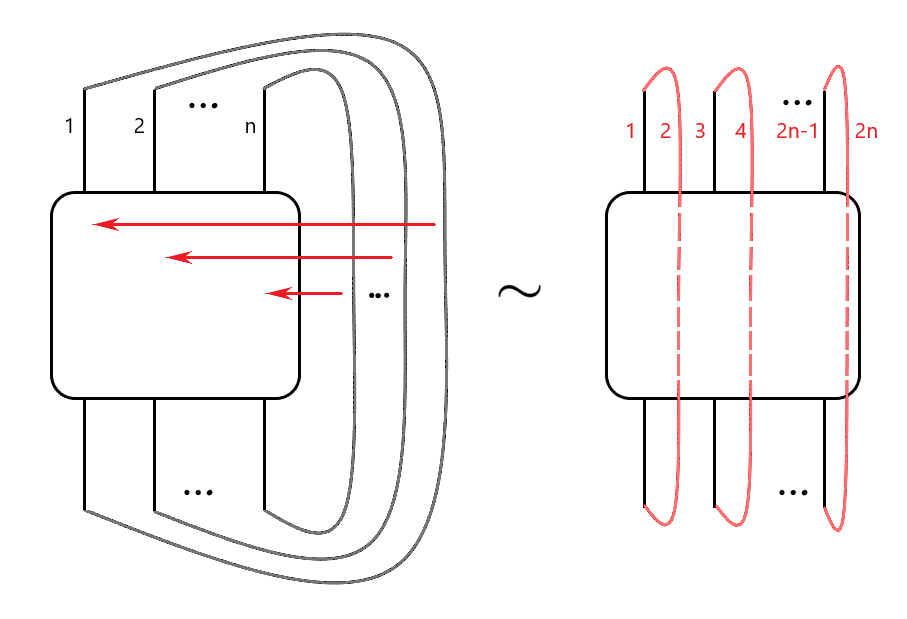}
    \caption{Passing from standard closure to the canonical plat form.}
    \label{fig:L-C}
\end{figure}


Note that, given the standard orientation of a classical braid, all the new strands from the closure are oriented upward (the red arcs in Fig.~\ref{fig:L-C}). 

In algebraic terms, it is possible to obtain the transformation replacing each \(\sigma_j^k\) by: 
\begin{equation}\label{eqn:transformation}
    \sigma_j^{k} \rightarrow \sigma_{2j-1}^{-1} \sigma_{2j}^k \sigma_{2j-1}, \qquad k = \pm 1.
\end{equation}
Note also that, in the process described above, a closing arc could be pulled equally well {\it over} the rest of the braid. 
We shall now define a useful notion: 

\begin{definition}\label{def:canonical_plat_form}
We will say that a braid \(B\) representing a link via plat closure is in \emph{canonical plat form} if every pair of even numbered endpoints are joined by a simple arc running entirely underneath the rest of the plat diagram, as abstracted in Fig.~\ref{fig:L-C}, and if \(B\) admits an orientation for which  these arcs are oriented upwards. Recall the example of the rightmost illustration in Fig.~\ref{fig:both_closure}.
\end{definition}
We can now state the following, which comprises an alternative proof of Theorem~\ref{platbraiding} and even sharpens it: 

\begin{theorem}\label{alternativeplatbraiding}
Every link \(L\) is isotopic to a closed braid in canonical plat form. Moreover, if \(\beta\) is a braid representing \(L\) via standard closure, the algorithm from standard closure to plat closure has a computational complexity of \(O(N)\), where \(N\) is the number of crossings in the braid \(\beta\).
\end{theorem}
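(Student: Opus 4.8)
The plan is to prove existence and the complexity bound together, since the algorithm itself manufactures the witnessing braid. First I would invoke the Alexander theorem to write $L$ as the standard closure of some $\beta \in \mathcal{B}_m$, with $\beta = \sigma_{j_1}^{k_1}\cdots\sigma_{j_N}^{k_N}$ a word of length $N$ in the generators (each $k_i = \pm 1$), so that $N$ is the number of crossings. The geometric content is already furnished by Figure~\ref{fig:L-C}: pulling each $i$-th closing arc to the left and behind the braid, up to the right of the $i$-th strand, is an ambient isotopy of $\mathbb{R}^3$ carrying the standard-closure diagram of $L$ to the plat diagram of a new braid $B \in \mathcal{B}_{2m}$. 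Note that $\mathcal{B}_{2m}$ has an even number of strands, so the plat closure is well defined. Since the transformation is realized by an isotopy, the plat closure of $B$ is isotopic to $L$; the first assertion then follows once we check that $B$ is in canonical plat form.

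For that verification (Definition~\ref{def:canonical_plat_form}), I would track the relabeling: the $i$-th strand of $\beta$ becomes the odd strand $2i-1$ of $B$, while the $i$-th closing arc becomes the even strand $2i$. The even strands are precisely the former closing arcs, which by construction run entirely behind the rest of the diagram; and since in the standard closure the braid strands are oriented downward and the closing arcs upward, the orientation induced on the plat is coherent and makes exactly these even arcs run upward, giving the orientation required by the definition. It remains to see that the substitution \eqref{eqn:transformation} faithfully encodes this picture. This is a short diagram-chase: tracking the three positions $2j-1, 2j, 2j+1$ through $\sigma_{2j-1}^{-1}\sigma_{2j}^{k}\sigma_{2j-1}$ shows that the two real strands at $2j-1$ and $2j+1$ are exchanged by the single crossing $\sigma_{2j}^{k}$ of sign $k$, while the intervening closing arc at $2j$ is shuttled aside and returned to its position, passing underneath throughout; the conjugating factors $\sigma_{2j-1}^{-1}$ and $\sigma_{2j-1}$ carry precisely the over/under data that place the closing arc beneath. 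Thus each original crossing $\sigma_j^k$ is reproduced between the correct pair of real strands without disturbing the underneath-running even strands, so $B$ is in canonical plat form and its plat closure equals $L$.

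The complexity bound is then immediate. The algorithm performs a single left-to-right pass over the word $\beta$, replacing each of the $N$ generators $\sigma_{j}^{k}$ by the length-three word of \eqref{eqn:transformation}; each replacement is constant work and the relabeling $j \mapsto 2j-1, 2j$ is computed on the fly. The output braid $B$ therefore has word length $3N$, and the whole procedure runs in $O(N)$ time, as claimed.

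The main obstacle is not the complexity count, which is routine, but making the verification in the second paragraph rigorous: one must confirm that the purely algebraic substitution \eqref{eqn:transformation} reproduces the geometric isotopy of Figure~\ref{fig:L-C} with the correct crossing signs, that it leaves the even (closing-arc) strands running entirely underneath, and that the inherited orientation is exactly the one demanded by canonical plat form. This is a careful bookkeeping argument over the local strand configuration rather than a computation, and it is where the precise over/under conventions implicit in \eqref{eqn:transformation} must be pinned down.
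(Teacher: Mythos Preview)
Your proof is correct and follows essentially the same route as the paper: invoke the Alexander theorem, apply the isotopy of Figure~\ref{fig:L-C} to obtain a braid in $\mathcal{B}_{2m}$, verify from the construction that the even-numbered strands (the former closing arcs) run entirely underneath and are oriented upward so that the result is in canonical plat form, and observe that the substitution~\eqref{eqn:transformation} is applied once per crossing to give linear complexity. Your second paragraph spells out the local verification of~\eqref{eqn:transformation} more explicitly than the paper does, but this is an elaboration of the same argument rather than a different one.
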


\begin{proof} This is easily proven. We first assign an orientation to each link component of \(L\) and apply the Alexander theorem. Let \(\beta \in \mathcal{B}_n\) be a resulting braid with standard closure isotopic to the (oriented) link \(L\). Then we apply the isotopy illustrated in Fig.~\ref{fig:L-C}, which results in the  braid, say \(B \in \mathcal{B}_{2n}\), closed in canonical plat form.   Furthermore, since  \(\beta\)  has a natural orientation from top to bottom, all the red arcs in Fig.~\ref{fig:L-C} are oriented upwards and connect precisely the corresponding even numbered endpoints of \(B\). Hence, the orientation of the plat braid \(B\) is not the top to bottom orientation but the one inherited from the link. 
From the above, and dropping the orientation for \(L\), we can always turn a link into a plat in canonical form, according to Definition~\ref{def:canonical_plat_form}.

For this direction, from standard closure to plat closure,  equation (\ref{eqn:transformation}) gives the precise algebraic transformation for each braid generator. Note that, since the substitution is done once for each crossing, the algorithmic complexity is linear in the number of crossings.
\end{proof}


Furthermore we have: 
\begin{lemma}\label{prop:canon_plat}
A braid \(B \in \mathcal{B}_{2n}\) in canonical plat form is equivalent to a braid \(B' \in \mathcal{B}_{2n}\) in plat form admitting an orientation for which every pair of endpoints \((2i-1, 2i), i\in {1, \dots, n}\) there exists at least a simple arc running entirely above or entirely below the rest of the braid, and oriented upward, which joins the corresponding pair of endpoints. 
\end{lemma}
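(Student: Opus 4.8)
The plan is to reduce the statement to a single local interchange and then iterate. By Definition~\ref{def:canonical_plat_form}, the braid $B$ is such that each of the $n$ closing arcs joining the even-numbered endpoints runs entirely \emph{underneath} the rest of the diagram, and $B$ carries an orientation making all of these arcs point upward. The target braid $B'$ differs only in that, for each index $i$, the corresponding arc is permitted to run entirely above \emph{or} entirely below the rest of the braid. Thus it suffices to prove that switching one such arc from running underneath to running over produces an isotopic plat carrying the same upward orientation; applying this independently to whichever subset of the $n$ arcs one wishes then yields $B'$ in at most $n$ steps.

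For the single-arc move I would fix one closing arc $\alpha$ joining a pair of even-numbered endpoints and running monotonically beneath every strand of the braid, as abstracted in Fig.~\ref{fig:L-C}. Since $\alpha$ is a simple, unknotted arc whose endpoints are pinned on the boundary and which passes under the braid without linking it, one can perform an ambient isotopy of $\mathbb{R}^3$ (equivalently $S^3$) fixing the endpoints and the rest of the diagram, pushing $\alpha$ off the back of the projection plane and bringing it around to the front, so that it re-emerges running entirely over the braid. This is precisely the over/under interchange already indicated in Fig.~\ref{fig:pass_in_front} and in the remark following equation~(\ref{eqn:transformation}). The isotopy neither reverses any strand nor disturbs the plat-closure arcs at top and bottom, so the upward orientation of $\alpha$, and hence of the whole plat, is preserved.

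Iterating the move over the chosen indices transforms $B$ into a plat $B'$ in which, for every pair $(2i-1,2i)$ of endpoints, there is a simple arc running entirely above or entirely below the rest of the braid and oriented upward, which is exactly the asserted form; the equivalence of the two plats is witnessed by the composition of these ambient isotopies.

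The step I expect to be the main obstacle is making rigorous that each arc $\alpha$ is genuinely unlinked from the remainder of the diagram, so that the swing-around isotopy is actually available. Concretely, one must check that $\alpha$, together with a path running just behind the braid between its two endpoints, bounds an embedded disk disjoint from the interior of the braid; this embedded disk is what guarantees that $\alpha$ can be slid through the third dimension from under to over without creating or destroying crossings with the braid strands. Once this picture is in place the move is routine, but it is the point where the monotone, unlinked nature of the canonical closing arcs must be invoked explicitly.
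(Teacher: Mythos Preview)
Your argument reproduces one of the two local moves in the paper's proof, namely the over/under interchange of a free arc (this is exactly the content of Fig.~\ref{fig:pass_in_front}), and your justification of that move via an ambient isotopy through the back of the projection plane is fine. The gap is that you have misread the description of the target braid $B'$ and, as a consequence, you never invoke the second move the paper uses, illustrated in Fig.~\ref{fig:pass_top_bottom}.

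In your reading, the free arcs of $B'$ still join the \emph{even} endpoints (top $2i$ to bottom $2i$), the only new freedom being that each such arc may now lie over rather than under the rest of the diagram. The lemma, however, speaks of the \emph{pair} of positions $(2i-1,2i)$: the free upward arc associated with the $i$th pair is allowed to sit at the odd slot $2i-1$ just as well as at the even slot $2i$. The paper's proof accordingly also cites the move of Fig.~\ref{fig:pass_top_bottom}, which slides a free arc between an odd position and the adjacent even one by absorbing it into the two plat-closure caps. This is not a cosmetic point: in the proof of Theorem~\ref{th:main} the $S$-moves manufacture new free strands at whatever column the relevant crossing happened to live in, so there is no reason for those strands to land only at even positions; Lemma~\ref{prop:canon_plat} is invoked precisely to absorb this positional freedom, and for that both moves are required. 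Once you add the odd/even slide to your toolkit, your iterative scheme (treat the arcs independently and compose the local isotopies) goes through and coincides with the paper's argument.
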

Since \(B\) is in canonical plat form, following the procedure illustrated in Fig.~\ref{fig:pass_in_front} and Fig.~\ref{fig:pass_top_bottom} the proof of Lemma \ref{prop:canon_plat} is straightforward. Note that this is possible both if the even arc is entirely over (left) or entirely under (right) the rest of the braid.


\begin{figure}[h!]
    \centering
    \includegraphics[width = .9\textwidth]{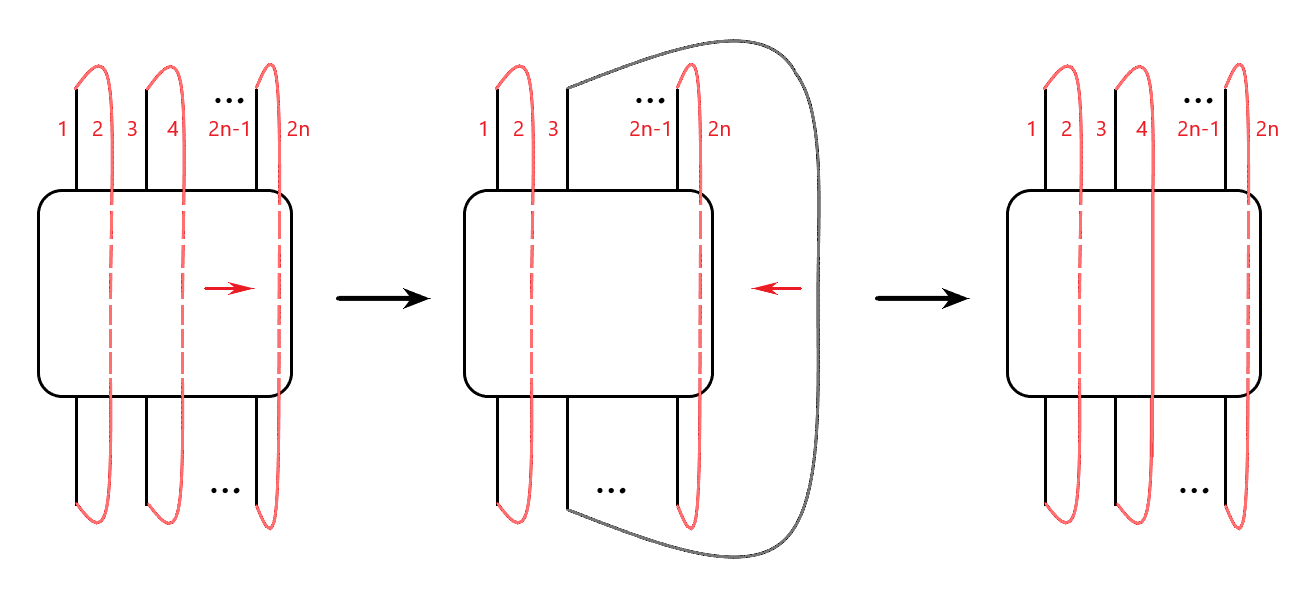}
    \caption{Passing one under arc of the canonical form into an over arc.}
    \label{fig:pass_in_front}
\end{figure}

\begin{figure}[h!]
    \centering
    \includegraphics[width = .9\textwidth]{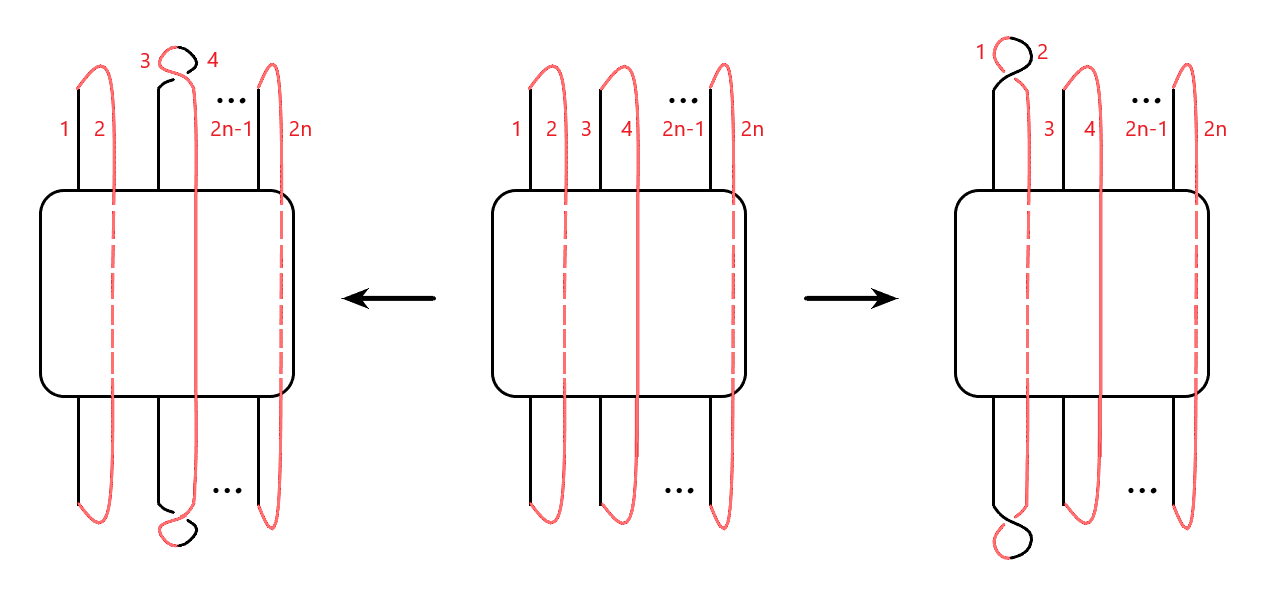}
    \caption{Passing one odd arc into an even arc, either entirely over (left) or entirely under (right) the rest of the braid.}
    \label{fig:pass_top_bottom}
\end{figure}

\subsection{From plat closure to standard closure of braids} \label{construction}

Let \(L\) be a link and \(B\in \mathcal{B}_{2n}\) a braid representing \(L\) via plat closure. Our aim is to prove the following. 

\begin{theorem}\label{th:main}
Given a link \(L\) represented by a braid \(B\in \mathcal{B}_{2n}\) with plat closure, it is possible  to generate algorithmically a braid \(\beta\) which represents an isotopic link but with standard closure. The algorithm from plat closure to standard closure has a computational complexity of \(O(N^2)\) where \(N\) is the number of crossings present in the braid \(B\). 
\end{theorem}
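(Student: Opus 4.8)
The plan is to regard the plat closure of $B$ as an oriented link diagram and to braid it by a sequence of local moves, using the canonical plat form to keep the combinatorics under control. First I would orient every component of $L$ and, invoking Theorem~\ref{alternativeplatbraiding} together with Lemma~\ref{prop:canon_plat}, bring $B$ to canonical plat form, so that the closing arcs joining the pairs $(2i-1,2i)$ are simple arcs running entirely behind (or, where convenient, in front of) the rest of the diagram. The decisive structural observation is that inside the braid box $B$ the strands never turn back: each of the $2n$ strands is monotone from top to bottom, so the orientation induced by $L$ makes every such strand \emph{entirely} upward or \emph{entirely} downward, and all orientation reversals are confined to the $n$ caps and $n$ cups. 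Thus the downward strands are already correctly braided, and the only obstruction to the diagram being the standard closure of a braid is the collection of upward-oriented strands together with the turnbacks at the closing arcs.

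Next I would eliminate these upward arcs one at a time by the $S$-move of Fig.~\ref{fig:s_move_intro}, which is the adaptation to oriented plats of Birman's stabilization combined with Lambropoulou's braiding move: an upward arc carrying crossings is cut and re-routed around the braid axis, passed either over or under the rest of the diagram according to the freedom supplied by Lemma~\ref{prop:canon_plat}, and thereby converted into downward-oriented strands at the cost of a stabilization and some new crossings. I would apply these moves in a fixed order, sweeping the upward arcs say from top to bottom, and argue by induction that re-routing one upward arc never manufactures a new upward arc, so that a suitable complexity strictly decreases and the process terminates. At that point the diagram is, by construction, a genuine braid $\beta$ on $m \ge 2n$ strands, all oriented downward, whose standard closure is isotopic to $L$; correctness of the isotopy holds because each $S$-move is a link isotopy (a stabilization followed by a planar braiding move), exactly as in the $L$-move proof of the Alexander theorem.

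For the complexity I would argue as follows. Closed components carrying no crossing are split unknots that are dispatched trivially, so the relevant number of strands is itself $O(N)$, and the number of upward arcs that must be braided is $O(N)$. When the $j$-th $S$-move carries an upward arc around the diagram it crosses all strands currently present between it and the axis, creating $O(2n+j)$ new crossings; since $2n + j = O(N)$ throughout, summing $\sum_{j=1}^{O(N)} O(n+j) = O(N^2)$ shows that the total number of crossings produced, and hence the running time of the algorithm, is $O(N^2)$.

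I expect the main obstacle to be the inductive braiding argument of the second step. One must define the $S$-move precisely on oriented plats, prove the monotonicity property that re-routing an upward arc strictly reduces the number of upward arcs (guaranteeing termination), and verify that the final object is an honest braid whose standard closure recovers $L$. Once this is in place the crossing count is routine, but obtaining a clean $O(N^2)$ rather than a larger polynomial hinges on organizing the $S$-moves so that each one interacts with the current diagram only a linear number of times.
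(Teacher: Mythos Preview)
Your high-level plan---orient the plat, then eliminate the upward arcs by $S$-moves until only type-$a$ crossings remain---is exactly the paper's strategy. However, your first step is miscast. You propose to ``bring $B$ to canonical plat form'' by invoking Theorem~\ref{alternativeplatbraiding} and Lemma~\ref{prop:canon_plat}. But canonical plat form (Definition~\ref{def:canonical_plat_form}) is not a statement about the small caps and cups of the plat closure; it requires that each arc joining a pair of even-numbered endpoints \emph{through the braid} runs entirely under the rest of the diagram. A braid in canonical plat form is already, up to reading off the odd strands, a standard-closure braid---so achieving it is the goal, not a preprocessing step. Moreover, the proof of Theorem~\ref{alternativeplatbraiding} passes through the Alexander theorem, so invoking it here is circular. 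In the paper, canonical plat form is what one \emph{arrives at} after performing all the $S$-moves, and Lemma~\ref{prop:canon_plat} is then used to recognise the result as a standard closure.

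On the $S$-moves themselves, your description (``an upward arc carrying crossings is cut and re-routed around the braid axis'') suggests one move per upward strand. The paper instead works crossing by crossing: every $b$-, $c$-, or $d$-type crossing receives its own $S$-move, and the outcome in each column is recorded by explicit words $C_a,C_b,C_c,C_d$ (equations~(\ref{eqn_C_a})--(\ref{eqn_C_d})), together with separate formulae for how the new strands thread through the top and bottom plat arcs. This per-crossing bookkeeping is what guarantees that no new up-arcs are created and makes the termination argument immediate, whereas your inductive claim (``re-routing one upward arc never manufactures a new upward arc'') would need to be justified carefully if a single move is meant to carry a whole multi-crossing strand around. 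Your $O(N^2)$ count, based on $O(N)$ moves each producing $O(N)$ new crossings, gives the right order; the paper obtains the same bound by counting the sign-checks needed within each column (at most $c_i(c_i-1)$ in column $i$, worst case $N(N-1)$ when all crossings lie in one column).
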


Our strategy for proving the direction from plat closure to standard closure is to generate from \(B\) a plat \(B'\) in canonical plat form with isotopic plat closure, and this will occupy the rest of this section. We shall first give some useful constructions. 

First, we give an orientation to the plat closure of \(B\in \mathcal{B}_{2n}\). 
  As a standard procedure, we give downward orientation to the leftmost strand of \(B\). Then we orient subsequent strands by following this orientation also in the plat closure arcs. If any strand of \(B\) lacks orientation (in the case of \(B\) representing a link with at least 2 components) then we take the first strand from the left with no orientation (it will be an odd strand) and give it a downward orientation. Of course, different plat orientations could result in different oriented link classes. Here we are interested only in the unoriented result of the algorithm, which will be always isotopy equivalent to the unoriented link class of the plat closure of \(B\).

Then we focus on the crossings. We divide them in four categories as illustrated in Fig.~\ref{fig:crossings}, where in the region of the red dot there is an over or an under crossing. We want to create a braid with all crossing of type \(a\). For this reason we shall apply \(S\)-moves to every crossing of type \(b, c, d\). 

\begin{figure}[h!]
    \centering
    \includegraphics[width = .7\textwidth]{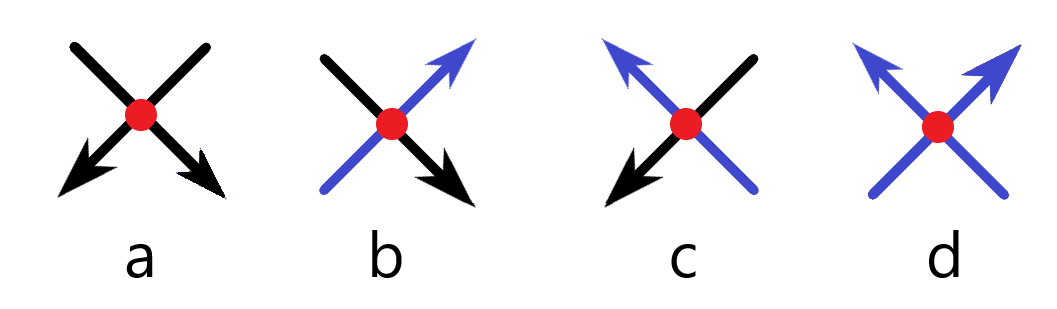}
    \caption{The four possible crossings. }
    \label{fig:crossings}
\end{figure}

\subsubsection{The \texorpdfstring{$S$}{TEXT}-move}

In \cite{birman1976stable} Birman introduced the \textit{stabilization move} as a move between braids in plat form with isotopic plat closure, see Fig.~\ref{fig:stabiliz_birman}. In the figure we see a small arc of the braid included in the disk, \([b_j', b_j, b_j'']\), to be isotopically replaced by three consecutive arcs, \([b_j', b_j^*, b_j^\#, b_j'']\), forming two spikes which reach the top and bottom part of the braid respectively. Here we use an open-ended version of a special case of the stabilization move; this shall be called ``\(S\)-move'', more precisely: 

\begin{figure}[h!]
    \centering
    \includegraphics[width = .55\textwidth]{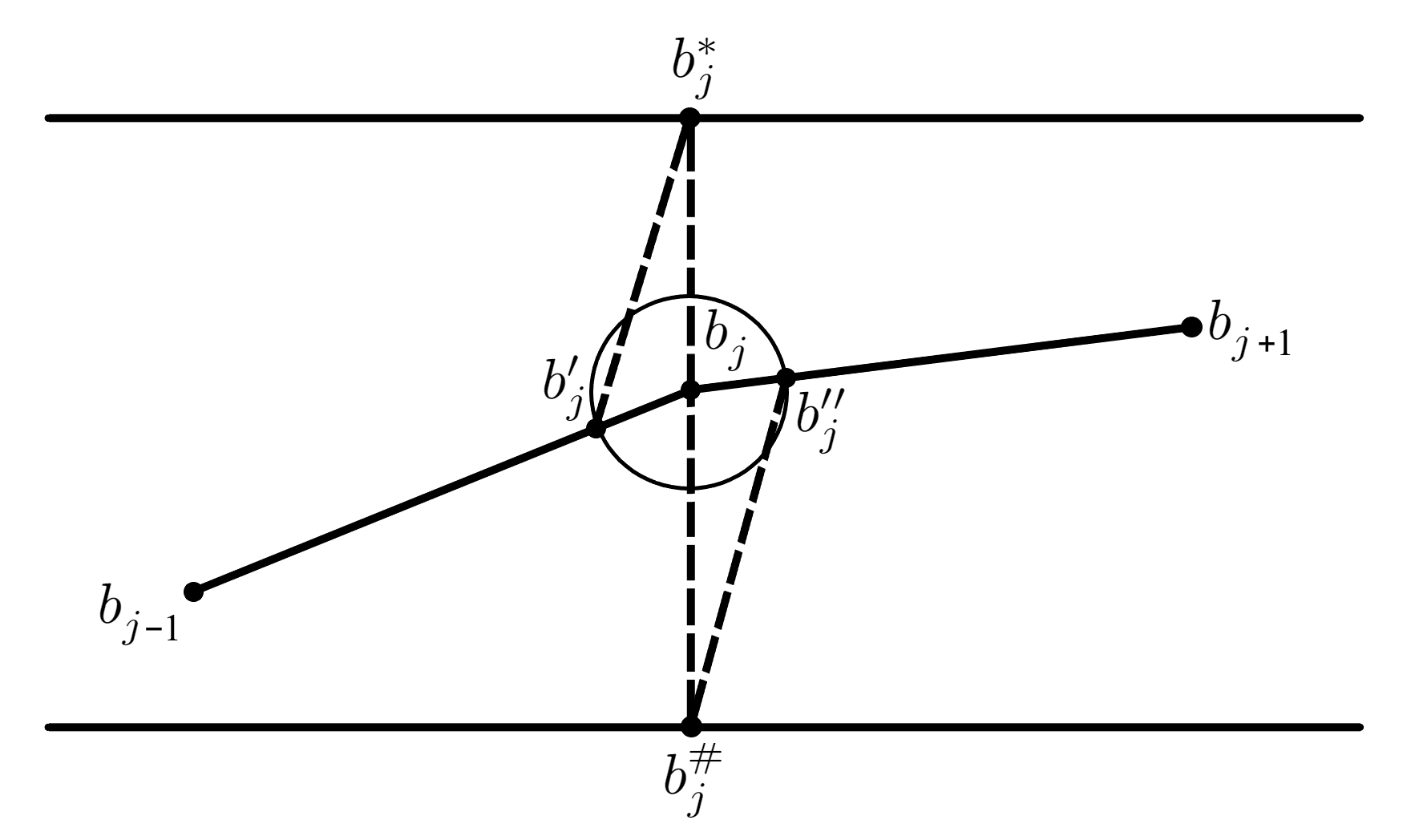}
    \caption{An abstraction of Birman's stabilization move.}
    \label{fig:stabiliz_birman}
\end{figure}

\begin{definition}[\(S\)-move]\label{def:s_move}
The \textit{\(S\)-move} is an adaptation of the stabilization move in the case of oriented plats, with the following features (view Fig.~\ref{fig:s_move}): 
\begin{itemize}
    \item it is applied on up-arcs; 
    \item the two isotopy spikes take place both entirely underneath or entirely above the rest of the diagram according to whether it is an under or over up-arc; 
    \item it eliminates the up-arc and produces two open-ended down-arcs which make a pair of corresponding braid strands. 
\end{itemize}
\end{definition} 
\begin{figure}[h!]
    \centering
    \includegraphics[width = .75\textwidth]{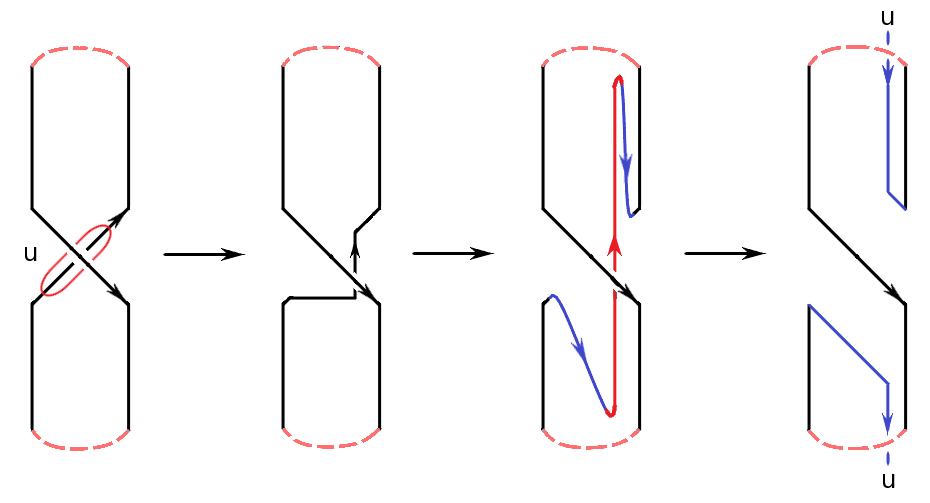}
    \caption{The \(S\)-move  with its intermediate steps. }
    \label{fig:s_move}
\end{figure}
\begin{remark} \rm 
It is worth pointing out that  the \(S\)-move is precisely the braiding move defined in  \cite{LaPhD} (see also \cite{lambropoulou1997markov}) for an alternative proof of the Alexander theorem and for extending the result to other 3-manifolds. 
\end{remark}
Take, for example, a \(b,c\) or \(d\)-type of crossing and consider a small piece of the up going strand (in case \(d\) we do the operation for both strands one after the other). Fig.~\ref{fig:s_move} illustrates the case of a type \(b\) crossing where the up-arc is an `under' arc. The \(S\)-move starts with isotoping this arc across a small sliding triangle so that it is replaced by the horizontal and vertical sides of the triangle. Then a stabilization move is applied to the vertical side of the triangle, so the two isotopy spikes take place both entirely underneath or entirely above the rest of the diagram. In Fig.~\ref{fig:s_move} this is depicted by the addition of two blue and one red arcs. The \(S\)-move is completed by omitting the red arc from the representation. 
In Fig.~\ref{fig:s_move_cases} are represented the three possible cases of crossings. In both figures, as well in subsequent figures, we have indicated with dust-grey lines the arcs that may appear in the plat closure. Notice that there could be instances where there are no closure arcs (when the left hand side strand is an even numbered one). 

\begin{figure}[h!]
    \centering
    \includegraphics[width = \textwidth]{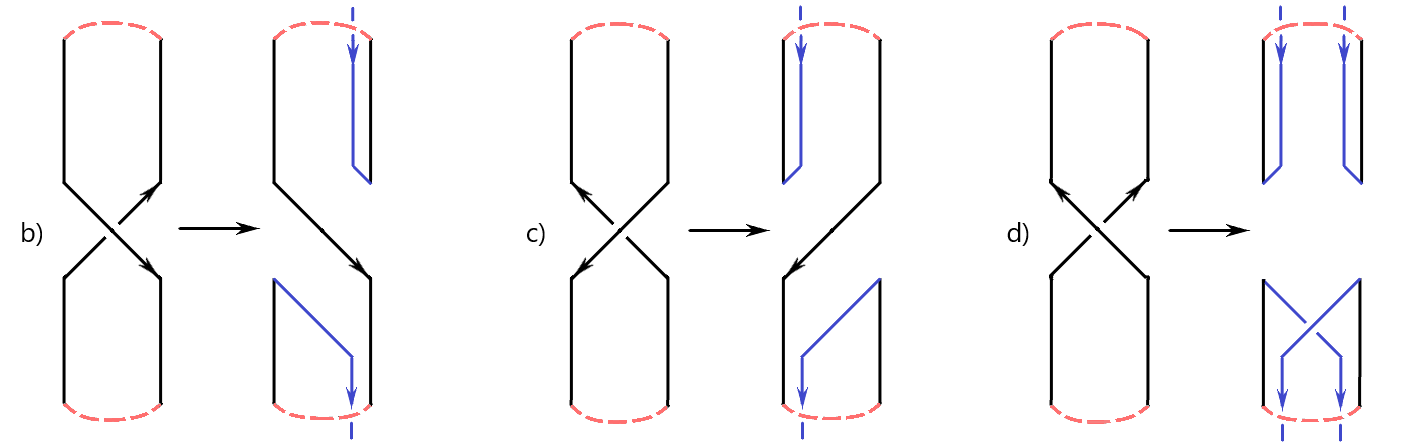}
    \caption{The three possible outcomes of a \(S\)-move. }
    \label{fig:s_move_cases}
\end{figure}

\begin{remark} \rm 
The elimination of the up-arc in Definition~\ref{def:s_move} is  crucial so that the application of an \(S\)-move will not create other \(b, c\) or \(d\)-type crossings. 
\end{remark}

\subsubsection{Managing crossings on the same column} 

If we have different \(S\)-moves applied on the same column, we do them in such a way that the new strands coming from the same type of crossings never intersect each other. An example can be seen in Fig.~\ref{fig:s_move_column}: the red strands coming from the \(c\)-type crossings are all on the left, one after the other, while the blue strands coming from the \(b\)-type crossings are all on the right in a similar manner. \(d\)-type crossings generate one left and one right strand. It is important to note that, once we have the sequence of all the crossings (with labelled types), we can give the univocal sequence of the new crossings generated by the algorithm. 

More precisely, suppose that we have, in the column \(i\), a total number of \(H\) type \(c\) and \(d\) crossings and a total number of  \(K\) type \(b\) and \(d\) crossings (the type \(d\) crossings are counted doubly because each one gives rise to a blue and a red strand). After an \(a, b, c\) or \(d\)-type crossing, say \(\bar{\sigma_i}\), is dealt with, we will have a situation like in Fig.~\ref{fig:type}. Here, we can see how all different cases present different outcomes for the starting and ending original strands defining the column~\(i\). The sequences of the final crossings can be summarized in these explicit formulae: 

\begin{flalign}
C_a(\bar{\sigma_i}) = & (\sigma_{i}^{\delta_i} \sigma_{i+1}^{\delta_{i+1}} \dots \sigma_{i+H-1}^{\delta_{i+H-1}})   (\sigma_{i+H+K}^{\delta_{i+H+K}} \sigma_{i+H+K-1}^{\delta_{i+H+K-1}} \dots \sigma_{i+H+1}^{\delta_{i+H+1}})   \bar{\sigma}_{i+H} \label{eqn_C_a} \\[1ex] 
\nonumber & \quad (\sigma_{i+H-1}^{-\delta_{i+H-1}} \sigma_{i+H-2}^{-\delta_{i+H-2}} \dots \sigma_{i}^{-\delta_{i}}) (\sigma_{i+H+1}^{-\delta_{i+H+1}} \sigma_{i+H+2}^{-\delta_{i+H+2}} \dots \sigma_{i+H+K}^{-\delta_{i+H+K}})\\[2ex]  
C_b(\bar{\sigma_i}) = & (\sigma_{i}^{\delta_{i}} \sigma_{i+1}^{\delta_{i+1}} \dots \sigma_{i+H-1}^{\delta_{i+H-1}})   (\sigma_{i+H}^{\delta_{i+H}} \sigma_{i+H+1}^{\delta_{i+H+1}} \dots \sigma_{i+H+K-1}^{\delta_{i+H+K-1}})   (\sigma_{i-1}^{\delta_{i-1}} \sigma_{i}^{\delta_{i}} \dots \sigma_{i+H-2}^{\delta_{i+H-2}}) \label{eqn_C_b} \\[2ex] 
C_c(\bar{\sigma_i}) = & (\sigma_{i+H+K}^{\delta_{i+H+K}} \sigma_{i+H+K-1}^{\delta_{i+H+K-1}} \dots \sigma_{i+H+1}^{\delta_{i+H+1}})   (\sigma_{i+H}^{\delta_{i+H}} \sigma_{i+H-1}^{\delta_{i+H-1}} \dots \sigma_{i+1}^{\delta_{i+1}}) \label{eqn_C_c} \\[1ex] 
\nonumber & \quad (\sigma_{i+H+K+1}^{\delta_{i+H+K+1}} \sigma_{i+H+K}^{\delta_{i+H+K}} \dots \sigma_{i+H+2}^{\delta_{i+H+2}}) \\[2ex] 
C_d(\bar{\sigma_i}) = & (\sigma_{i}^{\delta_{i}} \sigma_{i+1}^{\delta_{i+1}} \dots \sigma_{i+H-1}^{\delta_{i+H-1}})(\sigma_{i+H+K}^{\delta_{i+H+K}} \sigma_{i+H+K-1}^{\delta_{i+H+K-1}} \dots \sigma_{i+H+1}^{\delta_{i+H+1}})   \bar{\sigma}_{i+H}^{-1} \label{eqn_C_d}
\end{flalign}
with \(\delta_{i-1}, \delta_i, \delta_{i+1}, \dots, \delta_{i+H+K} \in \{+1, -1\}\) depending on the \(S\)-moves. \\

When dealing with this transformation is easy to check that we actually create some local minima or maxima along our braid. Checking all possibilities of subsequent \(a,b,c\) and \(d\)-type crossings, according to the orientation of the strands, the situation can be brought to a standard braid with a local isotopy move or a Reidemeister I move. In Fig.~\ref{fig:all_interesting_cases} there are some interesting cases. Note that, in fact, these are the only cases where Reidemeister I will appear, indeed when we consider braids with multiple columns, the process will eliminate all up arcs in the boundary of the columns with the process we describe. If we have a Reidemeister I move it is important to note that we have to eliminate the related crossing, following this scheme: 
\begin{itemize}
    \item two \(d\)-type crossings one after the other on the same column: remove \(\sigma_i^{\delta_i}\) and \(\sigma_{i+H+K}^{\delta_{i+H+K}}\) from the first \(C_d\); 
    \item a \(b\)-type crossing followed by a \(c\)-type crossing on the same column: remove \(\sigma_{i-1}^{\delta_{i-1}}\) from \(C_b\); 
    \item a \(c\)-type crossing followed by a \(b\)-type crossing on the same column: remove \(\sigma_{i+H+2}^{\delta_{i+H+2}}\) from \(C_c\); 
\end{itemize}

\begin{figure}[h!]
\centering
\begin{subfigure}{.45\textwidth}
  \centering
\includegraphics[width=\textwidth]{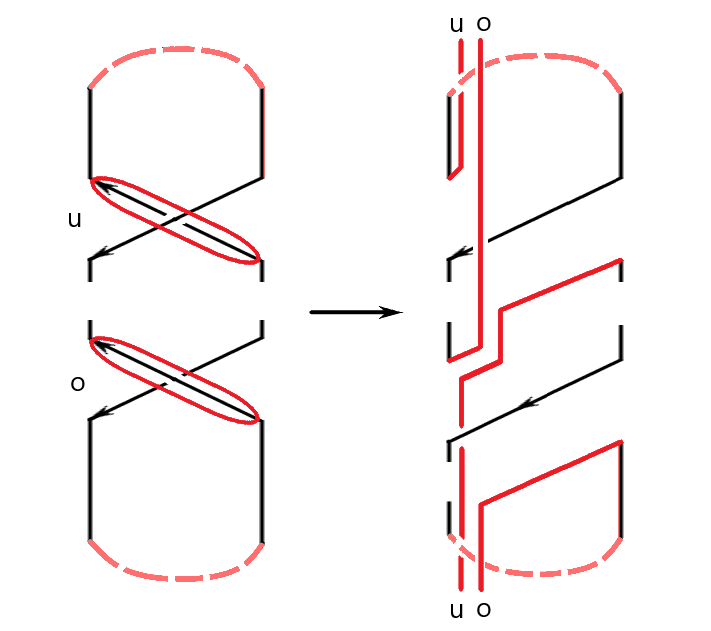}
\end{subfigure}
\begin{subfigure}{.45\textwidth}
  \centering
\includegraphics[width=\textwidth]{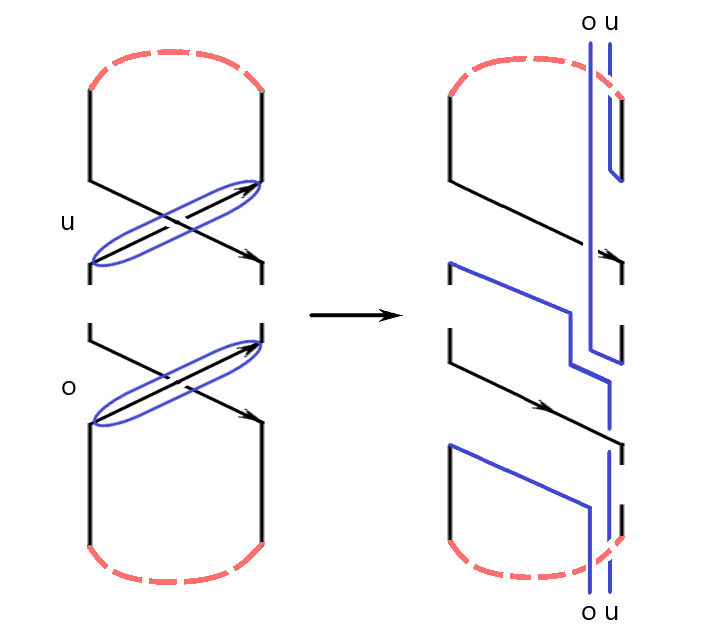}
\end{subfigure}
    \caption{The \(S\)-move applied on different crossings of same types on the same column (\(c\)-type and \(b\)-type respectively), which create only left or only right new strands. }
\end{figure}

\begin{figure}[h!]
    \centering
    \includegraphics[width = .5\textwidth]{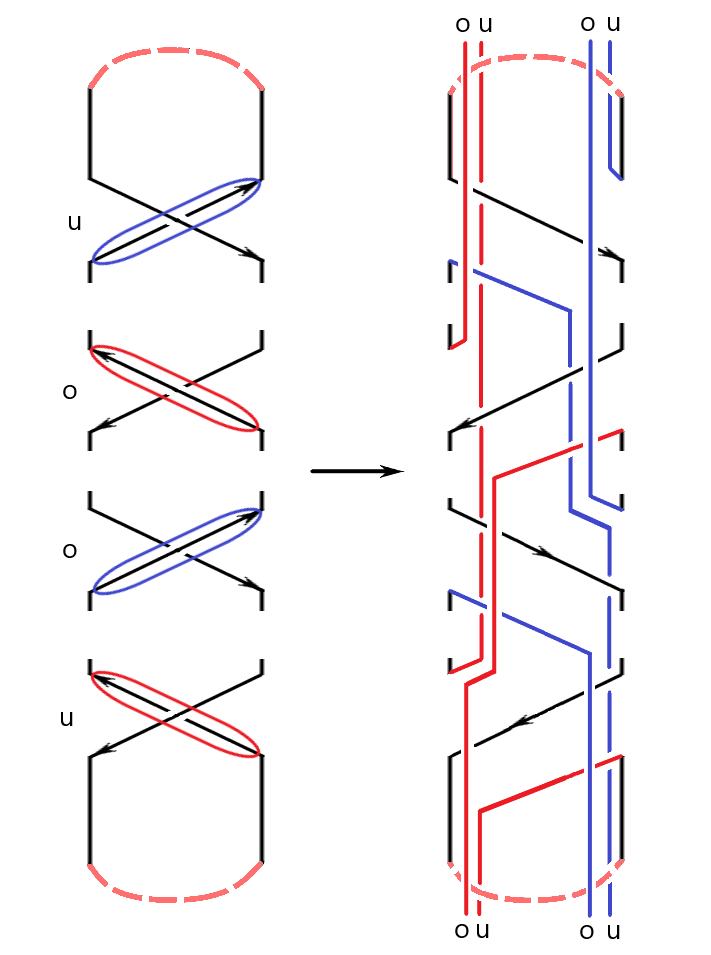}
    \caption{The \(S\)-move applied on different crossings of types \(c\) and \(b\) on the same column. }
    \label{fig:s_move_column}
\end{figure}

\begin{figure}[ht!]
\centering
\begin{subfigure}{.3\textwidth}
  \centering
\includegraphics[width=\textwidth]{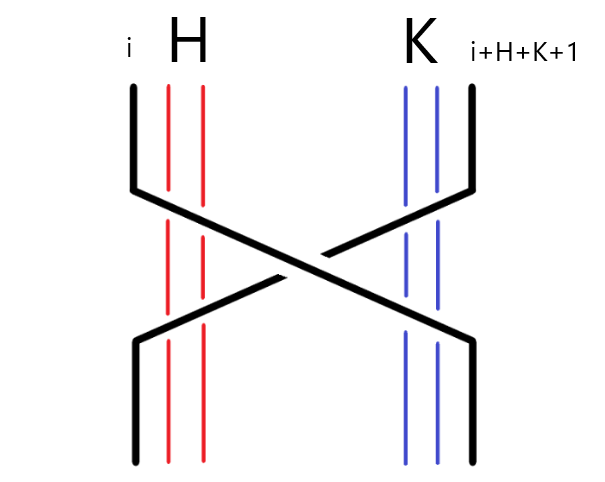}
  \caption{\(a\)-type}
\end{subfigure}
\begin{subfigure}{.3\textwidth}
  \centering
\includegraphics[width=\textwidth]{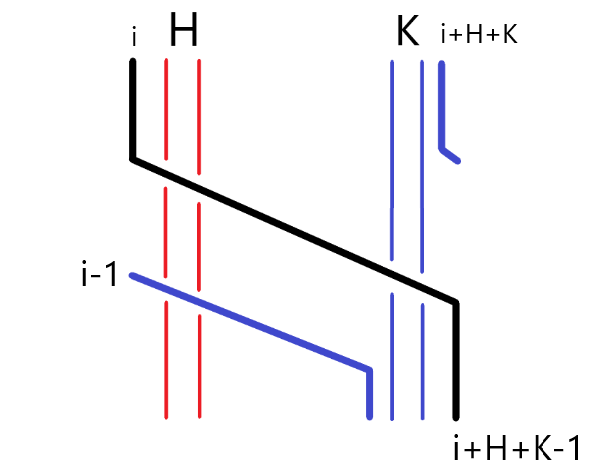}
  \caption{\(b\)-type}
\end{subfigure}

\begin{subfigure}{.3\textwidth}
  \centering
\includegraphics[width=\textwidth]{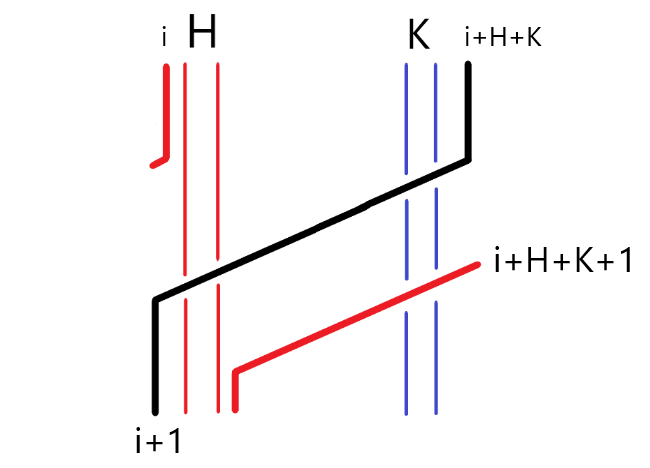}
  \caption{\(c\)-type}
\end{subfigure}
\begin{subfigure}{.3\textwidth}
  \centering
\includegraphics[width=\textwidth]{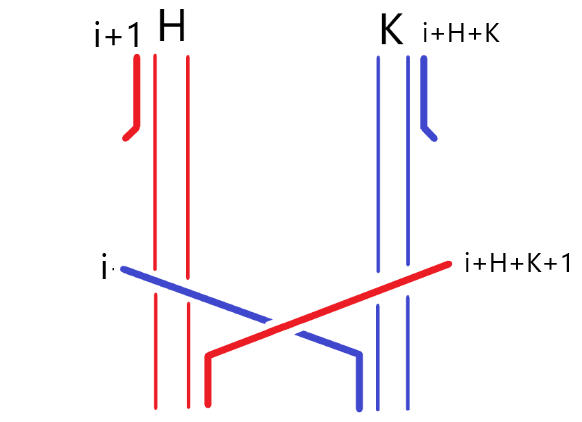}
  \caption{\(d\)-type}
\end{subfigure}
    \caption{The four possible outcomes after all the \(S\)-moves in a column. }
    \label{fig:type}
\end{figure}

\begin{figure}[ht!]
    \centering
    \includegraphics[width = \textwidth]{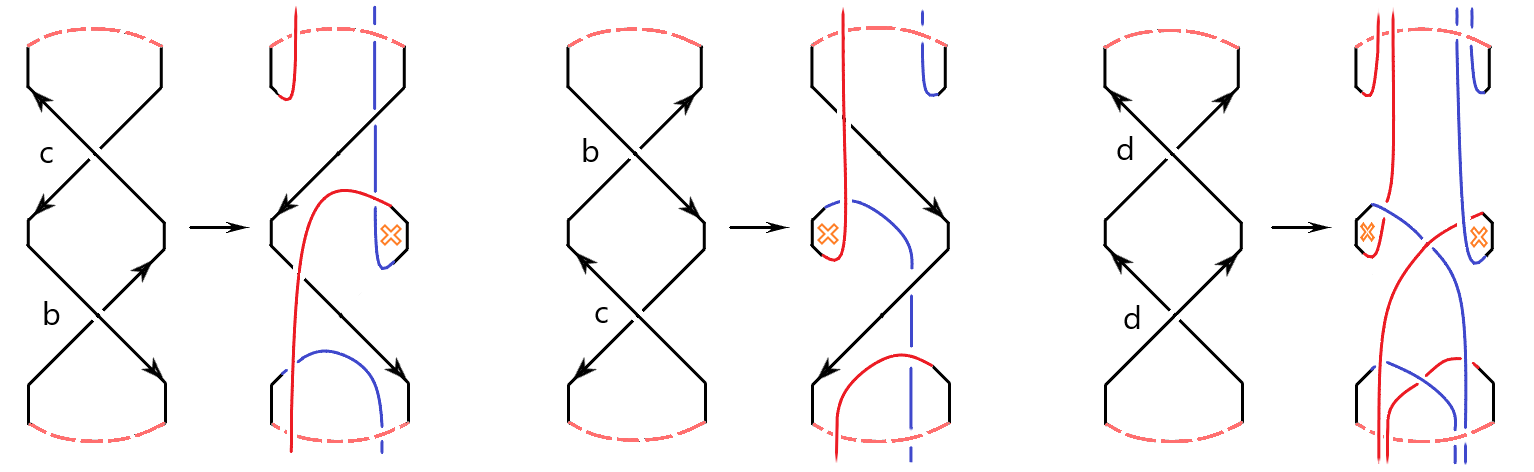}
    \caption{Some cases where the Reidemeister I move is needed to bring the braid into standard position. }
    \label{fig:all_interesting_cases}
\end{figure}

\subsubsection{Top part and bottom part management}\label{top_bottom_management}
Now that we have dealt with all the possibilities in the braid, we have to address the problem of the intersections of the new strands with the plat closure arcs of the braid. In fact the new strands and these intersections are formed with applications of \(S\)-moves, since the new strands extend higher than the top plat closure and lower than the bottom plat closure. 

For this reason, we have some possibilities occurring, depending on the orientation of the top (resp. bottom) plat closure arc and the possible crossings present right after (resp. above) it. 

Let us consider the top part first. Observing that only the odd labelled columns contain the plat closure arcs, we will only focus on these, because  we will not add crossings in the top part while performing the \(S\)-move along the even columns. We will address the odd column that we are working on with label \(i\). 
We have two possible orientations for the plat closure arc, from left to right and from right to left, and we will analyze them separately. 

With orientation from right to left, it is easy to see that, if there are crossings in columns \(i\) or \(i+1\), the first that joins the up arc of the closure arc one needs to agree to the orientation: type \(b\) or \(d\) in column \(i\), type \(c\) or \(d\) in column \(i+1\). 

Of course, in the first case, we will need to perform a \(S\)-move on the crossing, obtaining the situation depicted in Fig.~\ref{fig:top_part_1}. Considering the number of type \(c\) and \(d\) crossings as before, it is straightforward to check that in this case the new crossings form the word: 
\begin{equation}\label{eqn_t1}
T_{L, i} = \sigma_{i+H+K-2} \sigma_{i+H+K-3} \dots \sigma_{i+1} \sigma_i    
\end{equation}
with signs depending on the \(S\)-moves. It is important to note that, as we saw in the previous paragraph, we can apply a Reidemeister I move in the top right part to get the braid into a standard position: we would have also a \(\sigma_{i+H+K-1}\) but we have removed it. 

In the same way, for the \(i+1\) case we get the construction in Fig.~\ref{fig:top_part_2} and obtain the expression: 
\begin{equation}\label{eqn_t2}
T_{L, i+1} = \sigma_{i+H+K-1} \sigma_{i+H+K-2} \dots \sigma_{i+1} \sigma_i
\end{equation}
with signs depending on the \(S\)-moves, and no Reidemeister moves. 

With orientation from left to right, in the same way as before, we can see that if there are crossings in columns \(i\) or \(i-1\), the first one joining the up-arc of the closure arc needs to agree to the orientation: type \(b\) or \(d\) in column \(i-1\), type \(c\) or \(d\) in column \(i\). Analogously, following the construction in Fig.~\ref{fig:top_part_3}-\ref{fig:top_part_4}, we obtain the two expressions: 
\begin{equation}\label{eqn_t3}
T_{R, i-1} = \sigma_{i} \sigma_{i+1} \dots \sigma_{i+H+K-2} \sigma_{i+H+K-1}
\end{equation}
\begin{equation}\label{eqn_t4}
T_{R, i} = \sigma_{i} \sigma_{i+1} \dots \sigma_{i+H+K-1} \sigma_{i+H+K-2}
\end{equation}
with signs depending on the \(S\)-moves and a Reidemeister I move in the second case, which changes the number of the strings and leads to the final expression. 

\bigbreak

Now we consider the bottom part. Following the same process, we have the two possible orientations and four possible cases in total for the crossings: 
right to left, crossing of type \(c\) or \(d\) in column \(i-1\), Fig.~\ref{fig:bottom_part_1}; we can check how in column \(i-1\) nothing changes from the standard \(c\) or \(d\) case, but in column \(i\) we have 
\begin{equation}\label{eqn_b1}
B_{L, i-1} = \sigma_{i+H+K-1} \sigma_{i+H+K-2} \dots \sigma_{i}
\end{equation}

Right to left, crossing of type \(b\) or \(d\) in column \(i\), Fig.~\ref{fig:bottom_part_2}; following the picture, we can perform the Reidemeister 2 move on the left side (since all the crossings are coupled) eliminating the crossings on the left part and obtaining \begin{equation}\label{eqn_b2}
B_{L, i} = \sigma_{i+H} \sigma_{i+H+1} \dots \sigma_{i+H+K-2}
\end{equation}
with signs depending on the \(S\)-moves. 

Left to right, crossing of type \(c\) or \(d\) in column \(i\), Fig.~\ref{fig:bottom_part_3}; as before, we perform the Reidemeister 2 move but on the right side, obtaining 
\begin{equation}\label{eqn_b3}
B_{R, i} = \sigma_{i} \sigma_{i+1} \dots \sigma_{i+H-2}
\end{equation}
with signs depending on the \(S\)-moves. 

Left to right, crossing of type \(b\) or \(d\) in column \(i+1\), Fig.~\ref{fig:bottom_part_4}; as before, in column \(i+1\) nothing changes from the standard \(b\) or \(d\) case, adding in column \(i\) 
\begin{equation}\label{eqn_b4}
B_{R, i+1} = \sigma_{i} \sigma_{i+1} \dots \sigma_{i+H+K-1}
\end{equation}
with signs depending on the \(S\)-moves. 

\begin{figure}[h!]
\centering

\begin{subfigure}{.7\textwidth}
  \centering
  \includegraphics[width=\textwidth]{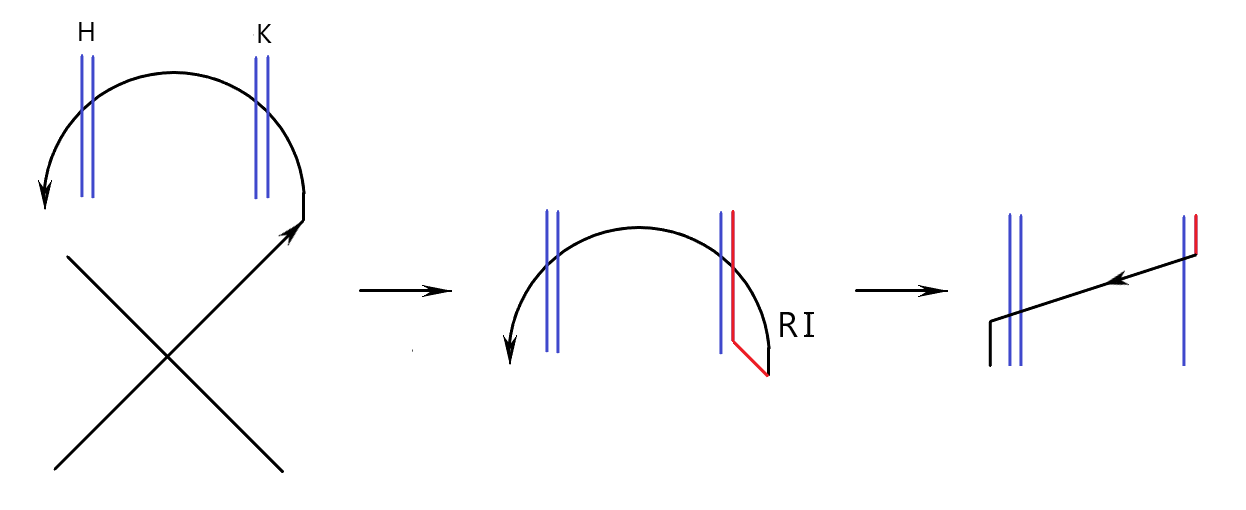}
  \caption{Top part with orientation right to left, first case;}
  \label{fig:top_part_1}
\end{subfigure}

\begin{subfigure}{.9\textwidth}
  \centering
  \includegraphics[width=\textwidth]{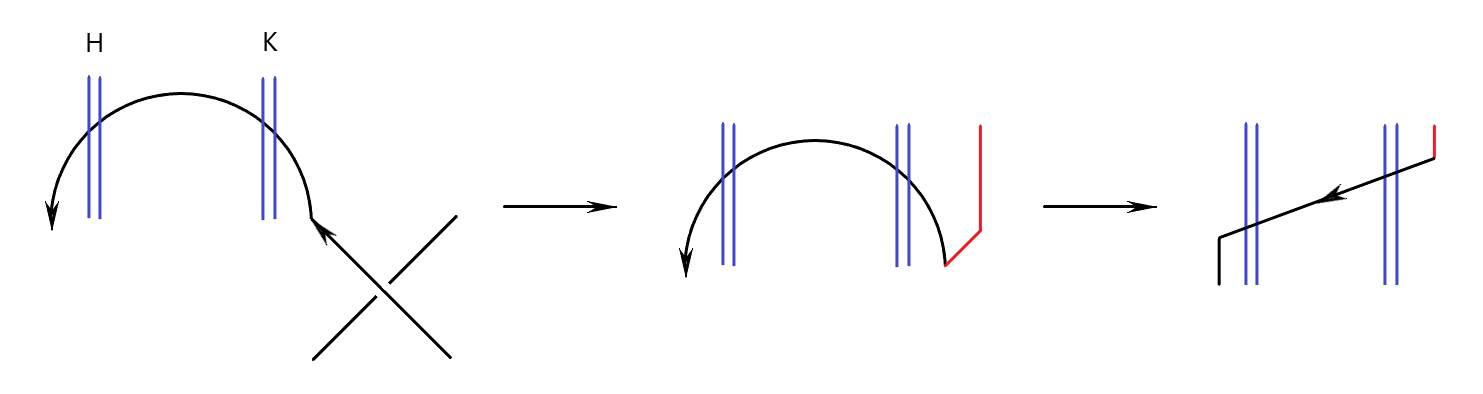}
  \caption{Top part with orientation right to left, second case;}
  \label{fig:top_part_2}
\end{subfigure}

\begin{subfigure}{.9\textwidth}
  \centering
  \includegraphics[width=\textwidth]{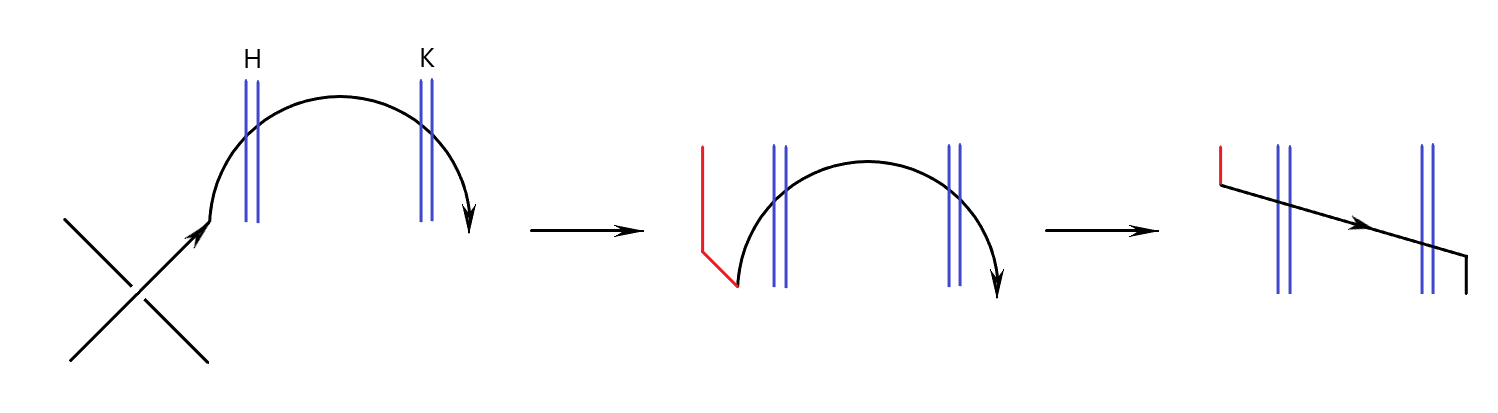}
  \caption{Top part with orientation left to right, first case;}
  \label{fig:top_part_3}
\end{subfigure}

\begin{subfigure}{.7\textwidth}
  \centering
  \includegraphics[width=\textwidth]{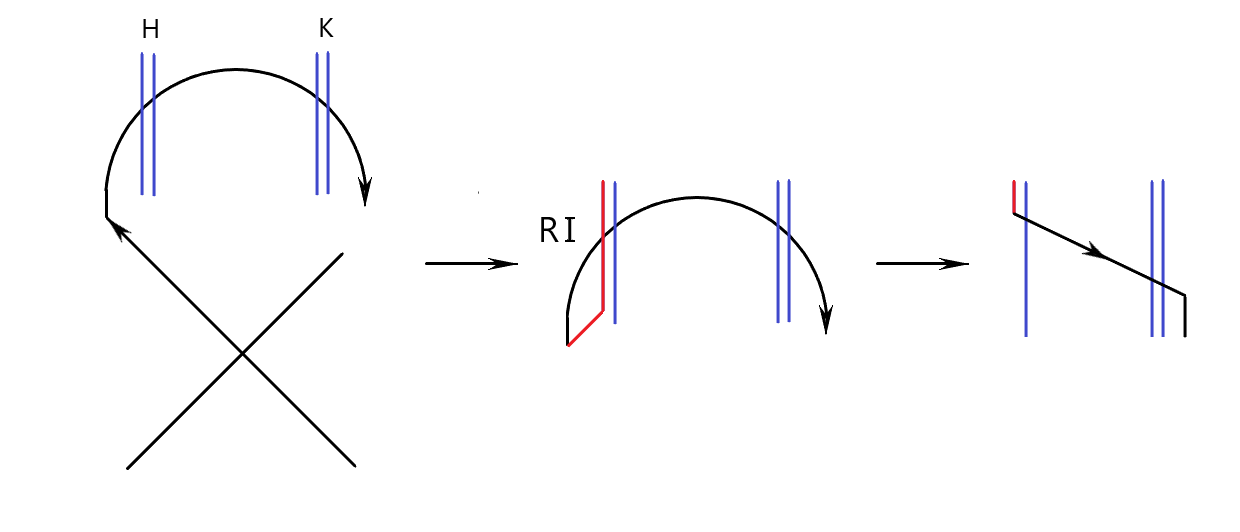}
  \caption{Top part with orientation left to right, second case;}
  \label{fig:top_part_4}
\end{subfigure}
\caption{Management of the top plat closure arcs. }
\end{figure}

\begin{figure}[h!]
\centering

\begin{subfigure}{.9\textwidth}
  \centering
  \includegraphics[width=\textwidth]{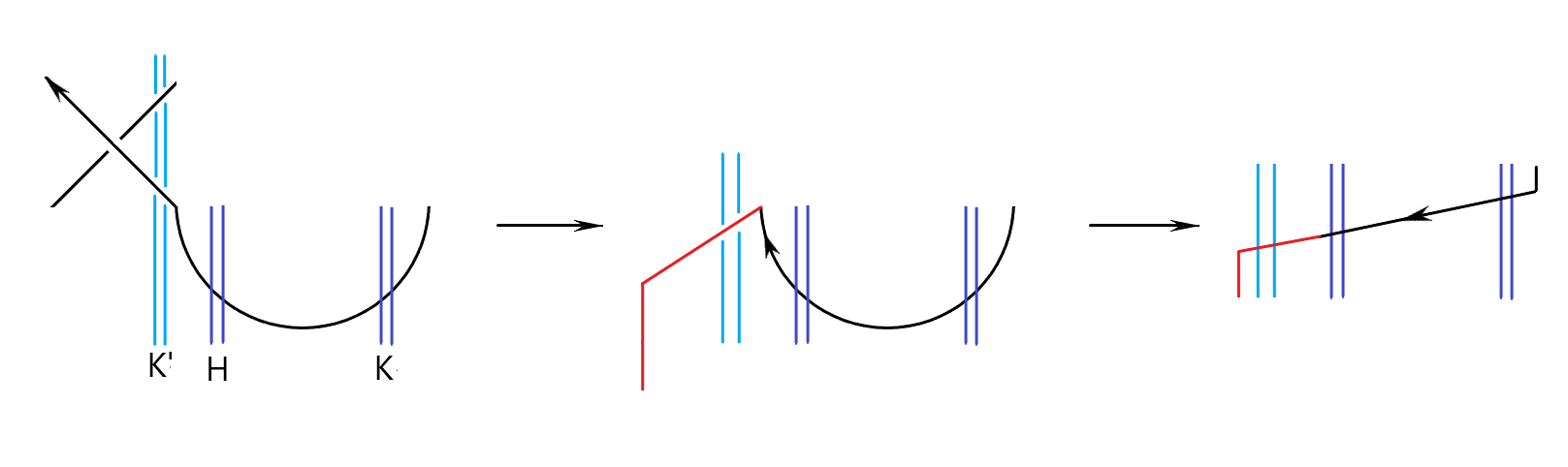}
  \caption{Bottom part with orientation right to left, first case;}
  \label{fig:bottom_part_1}
\end{subfigure}

\begin{subfigure}{.7\textwidth}
  \centering
  \includegraphics[width=\textwidth]{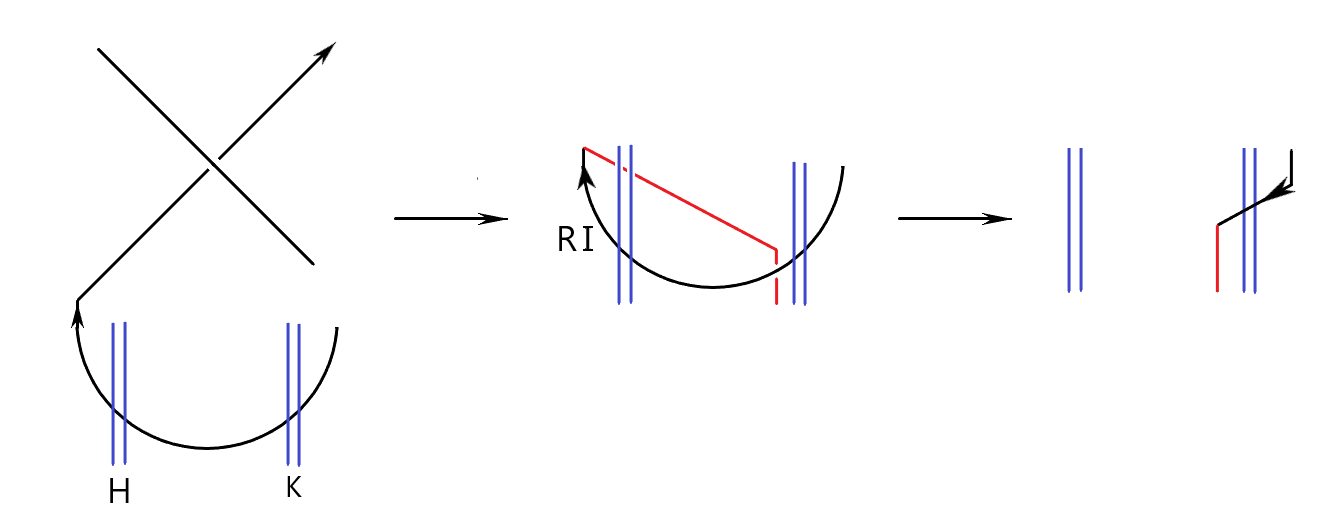}
  \caption{Bottom part with orientation right to left, second case;}
  \label{fig:bottom_part_2}
\end{subfigure}

\begin{subfigure}{.8\textwidth}
  \centering
  \includegraphics[width=\textwidth]{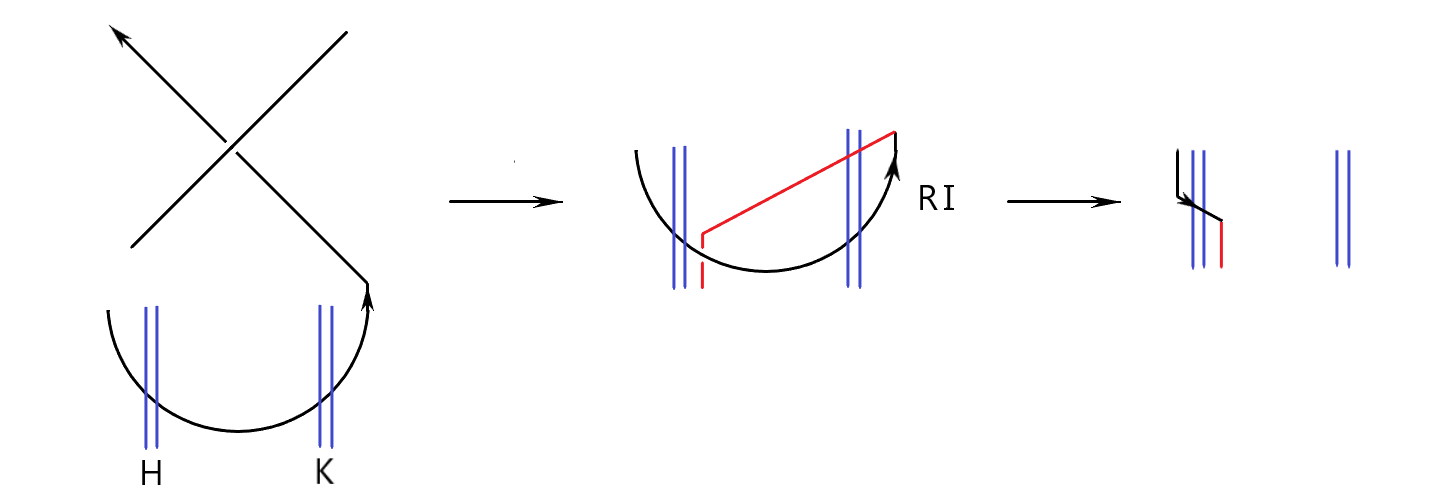}
  \caption{Bottom part with orientation left to right, first case;}
  \label{fig:bottom_part_3}
\end{subfigure}

\begin{subfigure}{.9\textwidth}
  \centering
  \includegraphics[width=\textwidth]{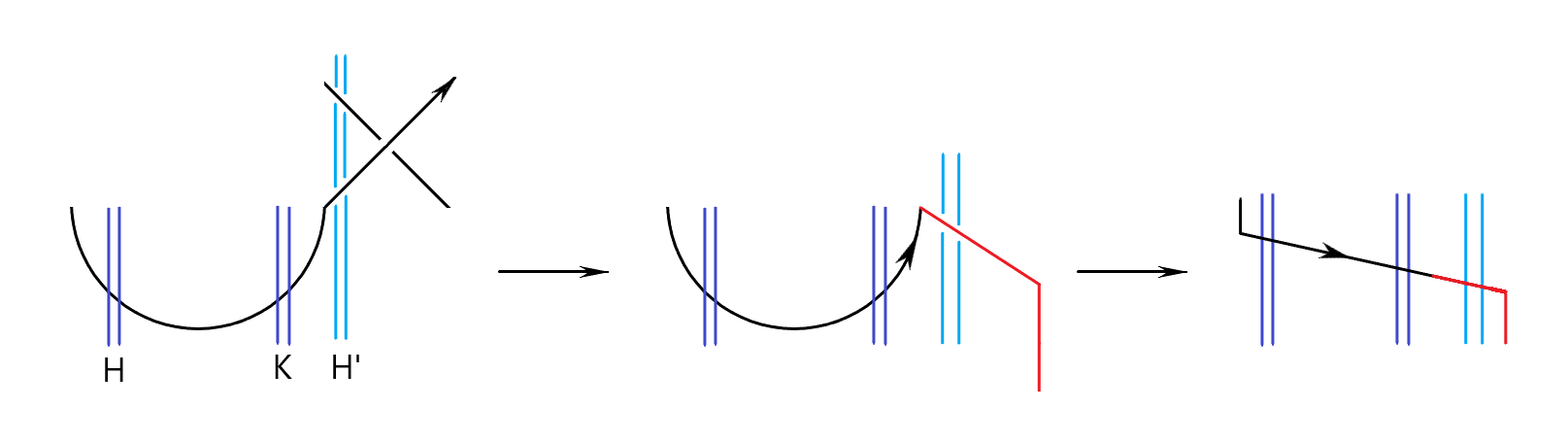}
  \caption{Bottom part with orientation left to right, second case;}
  \label{fig:bottom_part_4}
\end{subfigure}
\caption{Management of the bottom plat closure arcs.}
\end{figure}

\subsubsection{Trivial cases}

Until now we always considered the case of a column with at least one crossing. If we have no crossings in two subsequent columns then we will have between them a strand which goes straight from top to bottom. If it has a upward orientation then it can be considered as one of the closures in the standard closure, putting it aside. 

Moreover, if we have three consecutive empty columns, the first one of which is odd labeled, then we will have an extra trivial component to our link. 


\subsubsection{Proof of Theorem \ref{th:main}} \label{main_result}
We are ready to prove the main theorem of the section: 
\begin{proof}
Following the procedure of last section, give an orientation to the braid and label each crossing following Fig.~\ref{fig:crossings}. After this procedure, if any of the strands oriented upwards is entirely over or entirely under the rest of the braid, following Lemma \ref{prop:canon_plat} it can be considered as one of the closures in the standard closure, putting it aside.

We now perform the \(S\)-move on every \(b,c,d\)-type crossing, obtaining only \(a\)-type crossings. If we have a total of \(n_a, n_b, n_c\) and \(n_d\) crossings of \(a,b,c,d\)-type, then performing all the \(S\)-moves on the crossings of \(b,c,d\)-type, we add a total of \(n_b + n_c + 2\cdot n_d\) new strands. If there are trivial cases, where there are no crossings in two subsequent columns, then move the free strand above the braid. 
Now all the new strands and the ones coming from the trivial cases lay entirely above or underneath the braid. 
Using the result from Lemma \ref{prop:canon_plat}, we have as a result that the link represented by the newly generated braid with classical closure is equivalent to $L$. 

The operations made are all substitutions on the crossings and are done at most once every crossing. When we do the substitution we need to check all the other crossings on the same column in order to write correctly the \(\delta_i\)s of equations (\ref{eqn_C_a}), (\ref{eqn_C_b}), (\ref{eqn_C_c}), (\ref{eqn_C_d}). Suppose we have \(c_i\) crossings in the column \(i\), with \(N = \sum_{i = 1}^n c_i\). The total number of controls needed in column \(i\) are \(c_i \times (c_i-1)\), which gives a total of \(\sum_{i = 1}^n (c_i^2 - c_i)\) controls for the entire braid. 
In the worst case scenario we have only one column containing all the crossings, so the the number of controls needed is \(N \times (N-1) = N^2 - N\). We can deduce that the algorithm is quadratic in the number of the crossings. 
\end{proof}

\begin{example}\rm
In Fig.~\ref{fig:example_plat_standard_R3} there is depicted the passing from a braid representing a link via plat closure to a braid representing the same link but via standard closure. First we give an orientation to the link, then we apply all the \(S\)-moves. Thereafter we remove all the redundant kinks and obtain the final braid, which represents the same link class but via standard closure.
\end{example}

\begin{figure}[h!]
    \centering
    \includegraphics[width = \textwidth]{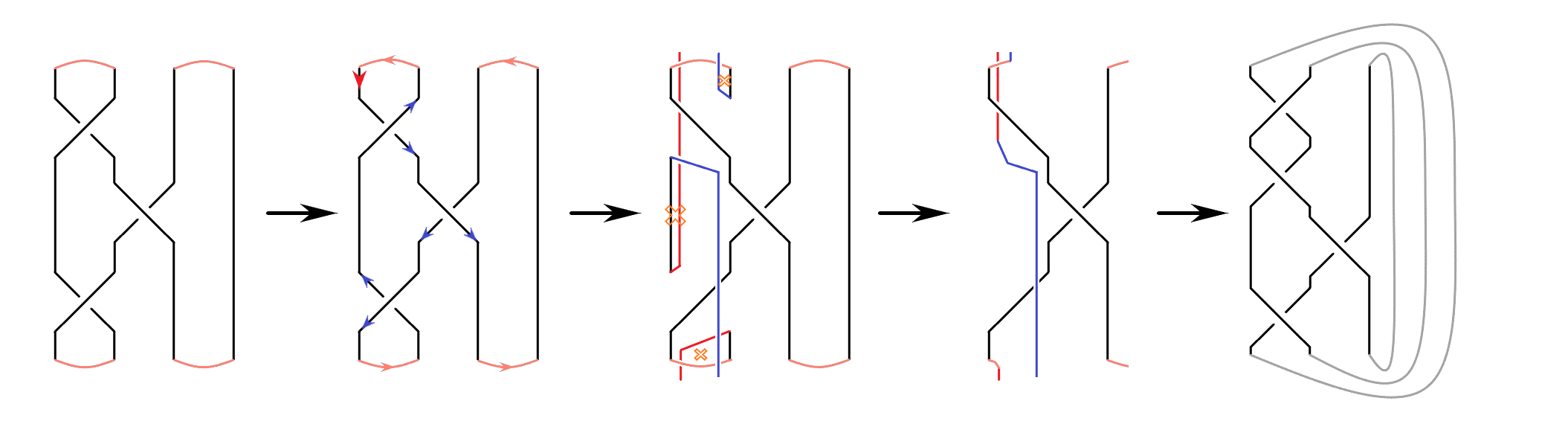}
    \caption{An example of passing from a braid in plat closure to one in standard closure. }
    \label{fig:example_plat_standard_R3}
\end{figure}

\section{The handlebody case} \label{Handlebody}

Let \(H_g\) be the abstract handlebody of genus \(g\). \(H_g\) can be constructed from a 3-ball with $g$ 1-handles attached. Equivalently, it can be constructed as the  product space of the $g$-punctured disc, $D_g$, cross the interval. See Fig.~\ref{fig:standard_handles}.  Namely,
 \(H_g = D_g \times [0,1]\).
 
\begin{figure}[h!]
    \centering
    \includegraphics[width = .9\textwidth]{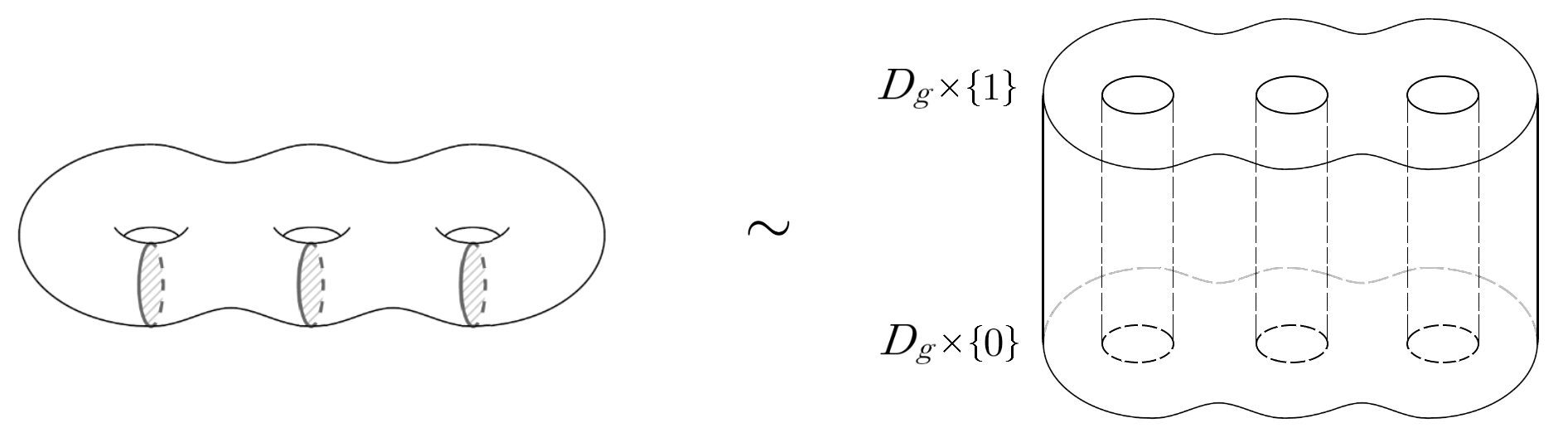}
    \caption{Abstract illustrations  of a handlebody of genus 3. }
    \label{fig:standard_handles}
\end{figure}
    
\noindent Hence, the `decompactified' \(H_g\) may be viewed as embedded in  \(\mathbb{R}^3\) and as the complement of the identity braid \(I_g\) on \(g\) indefinitely extended strands. For viewing \(H_g\) in \(S^3\) we consider all strands of \(I_g\)  meeting at the point at infinity, see Fig.~\ref{fig:mixed_tangle}(a). Then a thickening of the resulting graph is the complementary handlebody of \(H_g\) in \(S^3\). 

Let now \(L\) be a link in \(H_g\). Then, using the above representation and fixing \(I_g\) pointwise, \(L\) may be represented unambiguously  by the mixed tangle (or {\it mixed link}) \(I_g  \cup L\), see Fig.~\ref{fig:mixed_tangle}(b). $I_g$ forms the so-called {\it fixed part} or {\it fixed subbraid} of the mixed link, while $L$ forms the so-called {\it moving part} of the mixed link (see \cite{lambropoulou1997markov,haring2002knot}).

\subsection{Mixed braid groups}\label{section:mixed_braid}

It is shown in \cite{haring2002knot} that  a mixed link may be isotoped to the standard closure of a mixed braid, providing an analogue of the Alexander theorem for knots and links in $H_g$. Furthermore, a corresponding mixed braid equivalence is proved.

A {\it mixed braid} on $n$ moving strands, denoted $I_g \cup B$, is an element of the Artin braid group \(\mathcal{B}_{g+n}\) which contains the identity braid on \(g\) strands,  $I_g$, as a fixed subbraid (see abstraction in Fig.~\ref{fig:mixed_tangle}(c)). By removing $I_g$  we are left with the {\it moving subbraid} $B$ on $n$ strands. In the picture of $H_g = D_g \times [0,1]$, the top $n$ endpoints of the braid $B$ in $H_g$ lie in $D_g \times \{0\}$ and the corresponding bottom \(n\) endpoints of $B$ lie in $D_g \times \{1\}$. 

\begin{figure}[h!]
    \centering
    \includegraphics[width = .9\textwidth]{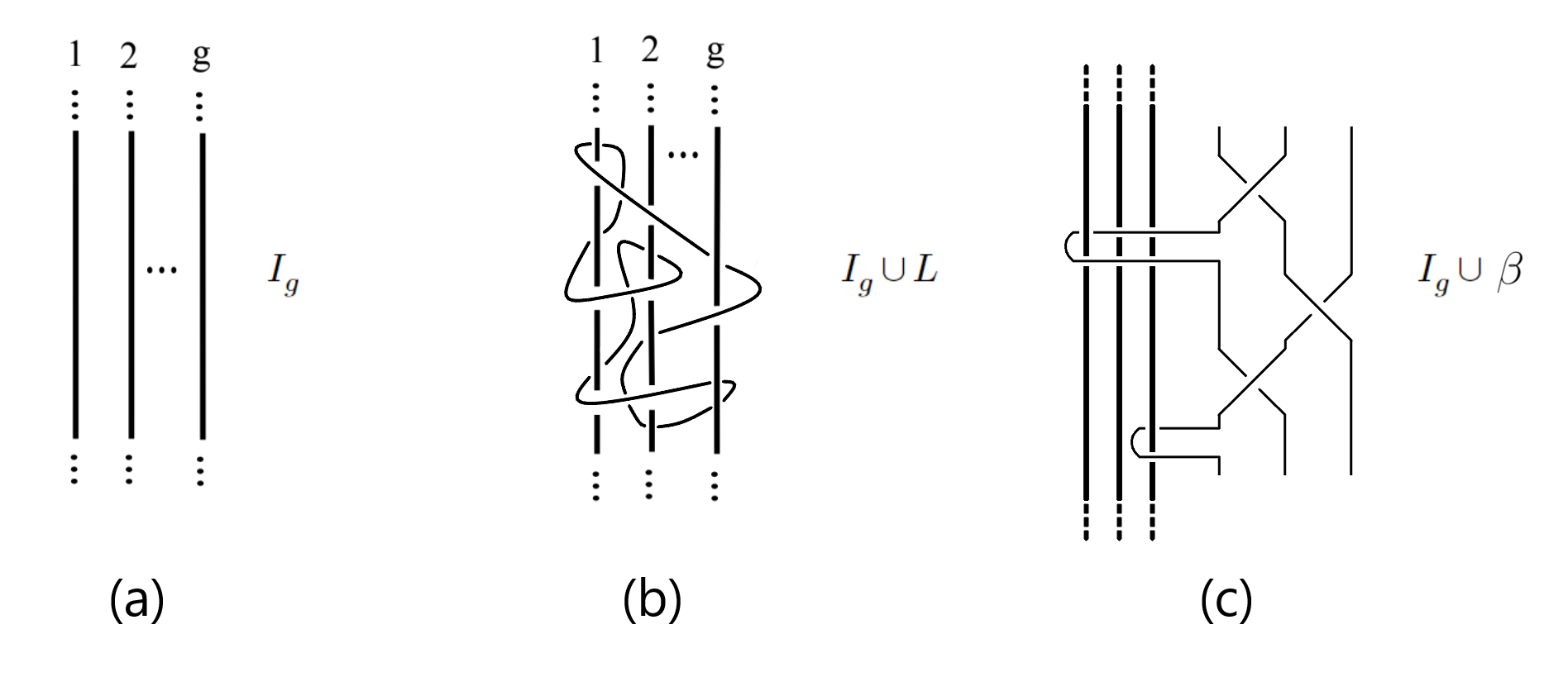}
    \caption{(a) representing \(H_g\) in \(S^3\); (b) a mixed link; 
    (C) a mixed braid.}
    \label{fig:mixed_tangle}
\end{figure}

Furthermore, it is established in \cite{haring2002knot} that the \emph{mixed braid groups}  \(\mathcal{B}_{g,n}\) \cite{lambropoulou2000braid}, $n\in \mathbb{N}$, comprise the algebraic counterpart of knots and links in \(H_g\). \(\mathcal{B}_{g,n}\) is the subgroup of all elements of \(\mathcal{B}_{g+n}\) for which, by removing the last \(n\) strands  we are left with the identity braid  $I_g$. Of course, if we take \(g = 0\), we obtain the standard definition of the Artin braid group in \(\mathbb{R}^3\), \(\mathcal{B}_n\). Elements of \(\mathcal{B}_{g,n}\) are called {\it  mixed braids}. 

In \cite{lambropoulou2000braid}, it is proved that \(\mathcal{B}_{g,n}\) has a presentation with generators: the elementary crossings among moving strands  $\sigma_1, \dots, \sigma_{n-1}$ (see Fig.~\ref{fig:generat_L}) and  the {\it loop generators } $\alpha_1, \dots, \alpha_g$, where \(\alpha_i\) represents a right-handed looping of the first moving strand around the $i$-th fixed strand, as depicted  in Fig.~\ref{fig:generat_L}, where also the inverse of a loop generator is depicted. Further, the generators satisfy the relations: 
\begin{equation} 
\begin{array}{rcll}
\sigma_k \sigma_j &=& \sigma_j \sigma_k & |k-j|>1, \\ 
\sigma_k \sigma_{k+1} \sigma_k &=& \sigma_{k+1} \sigma_k \sigma_{k+1} & 1\leq k\leq n-1,  \\
\alpha_r \sigma_k &=& \sigma_k \alpha_r  & k\geq 2, \  1\leq r \leq g, \\
\alpha_r \sigma_1 \alpha_r \sigma_1 &=& \sigma_1 \alpha_r \sigma_1 \alpha_r & 1 \leq r \leq g, \\
\alpha_r (\sigma_1 \alpha_s \sigma_1^{-1}) &=& (\sigma_1 \alpha_s \sigma_1^{-1}) \alpha_r & s < r.
\end{array}
\end{equation}


\begin{figure}[h!]
    \centering
    \includegraphics[width = .8\textwidth]{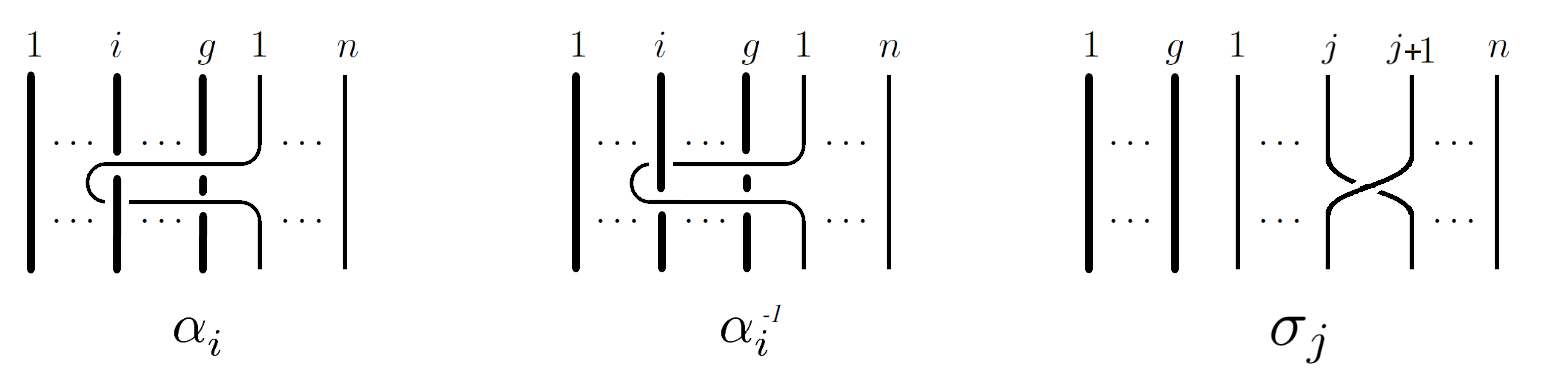}
    \caption{The generators of \(\mathcal{B}_{g,n}\).}
    \label{fig:generat_L}
\end{figure}

\begin{note}\label{note_notation_alpa} \rm 
It should be mentioned that in \cite{lambropoulou2000braid} and consequently in \cite{haring2002knot} and \cite{lambropoulou2006algebraic} the loop generators  \(\alpha_i\) are denoted by \(a_i\). In this paper we prefer to avoid the notation \(a_i\), as this is used in the next section for denoting a type of loop generators of the surface braid group.
\end{note}

\subsection{The standard closure and the plat closure for mixed braids}

\begin{definition}
Let \(\beta \in \mathcal{B}_{g,n}\) be a  mixed braid.  The {\it standard closure} of \(\beta\) is an oriented mixed link defined by joining with simple unlinked arcs each pair of corresponding endpoints of the moving braid strands. See thin grey arcs in Fig.~\ref{fig:mixed_tangle_2}(a). 
\end{definition}

Conversely, an oriented mixed link can be braided to a mixed braid with standard closure \cite{haring2002knot} (an analogue of the classical Alexander theorem for knots and links in $H_g$). Moreover, mixed link isotopy is translated in \cite{haring2002knot} into a mixed braid equivalence  in $\bigcup_n{\mathcal{B}_{g,n}}$ (an analogue of the classical Markov theorem). For details the reader may consult \cite{haring2002knot}. 

\begin{figure}[h!]
    \centering
    \includegraphics[width = .85\textwidth]{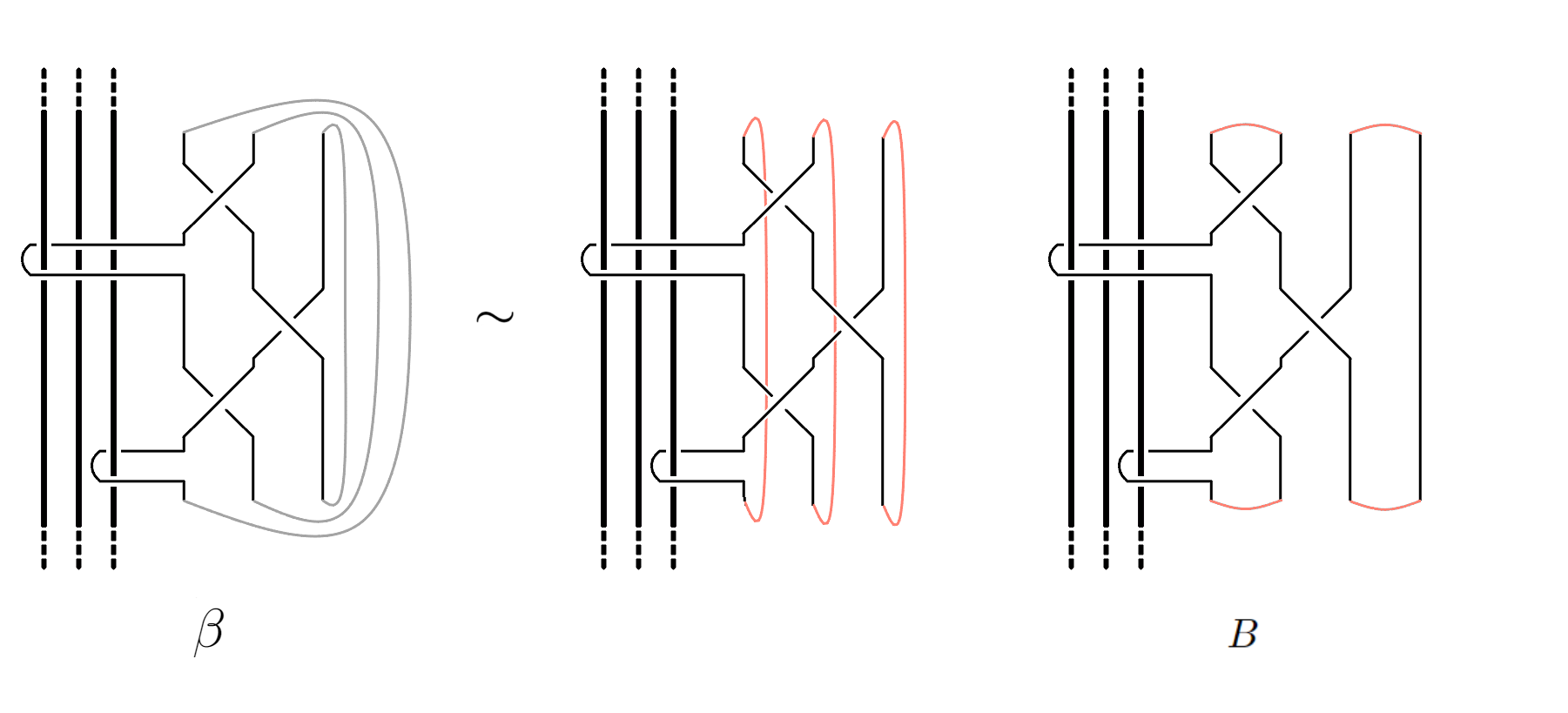}
    \caption{A mixed braid: with standard closure ($\beta$); with plat closure ($B$).}
    \label{fig:mixed_tangle_2}
\end{figure}

\begin{definition} \label{mixedplat}
Let $B \in \mathcal{B}_{g,2n}$ be a mixed braid with an even number of moving strands. The {\it plat closure} of $B$ is a mixed link defined as follows:  we first number the set of top (resp. bottom) endpoints from left to right, with numbers $1, 2, \ldots, 2n$ (resp. $1', 2', \dots, 2n'$). To each pair $(2i-1, 2i)$ (resp. $({2i-1}', 2i')$) we use a simple arc, say $\gamma_i$ (resp.$\gamma_i'$), as joining arc for the pair of endpoints. See thin grey arcs in Fig.~\ref{fig:mixed_tangle_2}(b). The resulting unoriented mixed link shall be referred to as {\it mixed plat}. 
\end{definition}

Note that the notion of plat closure in the definition above is well-defined, by recalling that $B $ is an element in $\mathcal{B}_{g+2n}$ and adapting the definition of plat closure for a classical braid. Note further that Definition~\ref{mixedplat} could be extended also in the case of an {\it odd } number of moving strands, using the same rationale as in $S^3$; recall discussion after Definition~\ref{odd_extension_R3}. 

\smallbreak
Although it is shown in \cite{haring2002knot} that any link \( L \) in the handlebody \(H_g\) can be represented by a mixed braid with isotopic standard closure, for the plat closure case we have to adapt the braiding algorithm of \cite{birman1976stable, cattabriga2018markov} to the handlebody setting. As it turns out, \(L\) can be  braided to a mixed braid \(B\) in some \(\mathcal{B}_{g,2n}\), representing the link via plat closure. This is presented in a sequel paper to this one (as well as the corresponding plat equivalence)  \cite{handlebody}. We will also prove the braiding here as a corollary, after giving a transformation from standard  to plat closure.

\subsection{From standard closure to plat closure}

Let now \(B \in \mathcal{B}_{g,2n}\) and \( \beta\in \mathcal{B}_{g,m}\) be two mixed braids representing a link \(L\) in $H_g$ via plat closure and standard closure respectively. The question is how these mixed braids are related. 

The first part of the result in the setting of  $H_g$  is obtained in the same way as in the case of \(\mathbb{R}^3\): 

\begin{theorem}\label{th:stand_plat_h}
Let \(H_g\) be the handlebody of genus \(g\). Given a link \(L\) in \(H_g\) represented by a mixed braid \(\beta\in \mathcal{B}_{g, m}\) with standard closure, it is possible algorithmically to generate a mixed braid \(B \in \mathcal{B}_{g,2m}\) which represents an equivalent link but with plat closure. The algorithm has a computational complexity of \(O(M)\), where \(M\) is the number of crossings present in the braid \(\beta\). 
\end{theorem}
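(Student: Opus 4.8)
The plan is to follow the same strategy as in the proof of Theorem~\ref{alternativeplatbraiding} for the \(\mathbb{R}^3\) case, adapting both the geometric isotopy and the algebraic substitution to the handlebody. First I would orient \(L\) and regard the given mixed braid \(\beta \in \mathcal{B}_{g,m}\) with standard closure inside the decompactified handlebody \(H_g\), viewed as the complement of the fixed subbraid \(I_g\) in \(\mathbb{R}^3\); as in the classical case, orienting the moving strands downward forces all of the moving closing arcs to be oriented upward. Then I would apply the handlebody analogue of the isotopy of Fig.~\ref{fig:L-C}: the closing arc of the \(i\)-th moving strand is pushed behind the braid and placed immediately to the right of that strand, so that the original moving strand and its closing arc become the \((2i-1)\)-th and \((2i)\)-th moving strands (the even one running behind the rest of the diagram) of a mixed braid \(B \in \mathcal{B}_{g,2m}\) in canonical plat form. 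Since the fixed strands all lie to the left of the moving strands and each closing arc is pushed to the right of its partner, this isotopy never crosses the fixed subbraid \(I_g\) and hence takes place entirely within \(H_g\); consequently the mixed plat of \(B\) is isotopic in \(H_g\) to the standard closure of \(\beta\), that is, to \(L\).

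Next I would record the transformation on generators. The crossings among moving strands transform exactly as in \(\mathbb{R}^3\), via equation~(\ref{eqn:transformation}), namely \(\sigma_j^{k} \rightarrow \sigma_{2j-1}^{-1}\sigma_{2j}^{k}\sigma_{2j-1}\) with \(k = \pm 1\), the conjugation by \(\sigma_{2j-1}\) serving precisely to carry the crossing past the interposed even (phantom) strand at position \(2j\). For the loop generators I expect the substitution to be simply \(\alpha_r^{\pm 1} \rightarrow \alpha_r^{\pm 1}\): after doubling, the first moving strand still occupies the first moving position, so looping it around the \(r\)-th fixed strand is unchanged, while the newly created even strand, being the closing arc that runs to the right of the first moving strand and away from the fixed subbraid, picks up no looping. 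A short check on the building block \(\beta = \alpha_r\) confirms that the mixed plat of \(B = \alpha_r\) again encircles the \(r\)-th fixed strand exactly once, as required.

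The step I expect to require the most care is justifying that, unlike the crossing generators, the loop generators need \emph{no} conjugation. The essential point is the asymmetry of the layout: \(\alpha_r\) moves the first moving strand to the \emph{left}, around the fixed strands, whereas the phantom closing-arc strands are created to the \emph{right} of their partners, so the two never interfere and the loop descends verbatim to \(\mathcal{B}_{g,2m}\). Once this is established, the substitution is applied once to each generator of the word representing \(\beta\); each \(\sigma_j^{\pm 1}\) is replaced by a word of length three and each \(\alpha_r^{\pm 1}\) by a single generator, so the length of the resulting word for \(B\) grows only by a constant factor. Hence the procedure is linear in the number of crossings (and loop generators) of \(\beta\), giving the complexity \(O(M)\). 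Finally, combining this construction with the handlebody Alexander theorem of \cite{haring2002knot} yields, as the promised corollary, that every link in \(H_g\) can be represented as the mixed plat of a braid in some \(\mathcal{B}_{g,2n}\).
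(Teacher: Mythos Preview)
Your proposal is correct and follows essentially the same approach as the paper: pull each closing arc behind the moving subbraid to the right of its strand, observe that the fixed subbraid \(I_g\) lies entirely to the left so the isotopy never touches it, and apply equation~(\ref{eqn:transformation}) to each \(\sigma_j^{\pm 1}\). The paper's proof is in fact terser than yours---it simply remarks that the fixed part ``does not interfere, since the closing arcs lie on the right'' and leaves the fate of the loop generators implicit; your explicit verification that \(\alpha_r^{\pm 1} \mapsto \alpha_r^{\pm 1}\) (because the looping goes left while the phantom strands appear to the right) is a welcome clarification rather than a different argument.
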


\begin{proof}
Let  \(L\) be a link in \(H_g\).  Recall that by the results in \cite{haring2002knot}, there exist mixed braids  representing \(L\) via the standard closure. 
Recall also that the mixed braid group \(\mathcal{B}_{g,n}\) is generated by the elementary crossings \(\sigma_j, \ j \in \{1, \dots, n-1\}\) and the loop generators \(\alpha_i,  \ i\in\{1, \dots, g\}\), see Fig.~\ref{fig:generat_L}. For this proof we shall use the techniques employed for proving Theorem~\ref{alternativeplatbraiding} for the case of $\mathbb{R}^3$, and we will expand them for taking care of the loop generators. More precisely, let \( \beta\in \mathcal{B}_{g,m}\) be a mixed braid representing \(L\) via the standard closure. View abstraction in rightmost illustration of Fig.~\ref{fig:mixed_tangle_2}.  We follow the same construction of Section \ref{from_standard}: we take the \(i\)-th closing arc and we let it  pass isotopically underneath the mixed braid till it comes to  the right of the \(i\)-th strand, becoming the \((2i-1)\)-th and  \(2i\)-th moving strands in the plat representation, recall Fig.~\ref{fig:L-C}. Note that, during this process the fixed part of the mixed braid (representing the holes of the handlebody) does not interfere, since the closing arcs lie on the right of the mixed braid. Therefore, the algebraic transformation for the generators $\sigma_j$ is still equation (\ref{eqn:transformation}). Note that, since the substitutions are done once for each crossing, the computational complexity is linear in the number of crossings.
\end{proof}

\subsection{From plat closure to standard closure}

In this section we will describe an algorithm  that, knowing the representation of \(L\) through a braid closed via plat closure, it gives us a representative of \(L\) in the other closure. The operation for $H_g$ is quite similar to the one described in the previous sections for the case of $\mathbb{R}^3$. We only need to, additionally, take care of the loop generators \(\alpha_i\) of the mixed braid. We now state our main result for this section. 

\begin{theorem}\label{teo:handleb_plat_standard}
Let \(H_g\) be a handlebody of genus \(g\). Given a link \(L\) in \(H_g\) represented by a mixed braid \(B\in \mathcal{B}_{g, 2n}\) with plat closure, it is possible algorithmically to generate a mixed braid \(\beta\in \mathcal{B}_{g,m}\) which represents an equivalent link but with standard closure. The algorithm has a computational complexity of \(O(N^2)\) where \(N\) is the number of crossings present in the braid \(B\). 
\end{theorem}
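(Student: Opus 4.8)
The plan is to mirror, almost verbatim, the proof of Theorem~\ref{th:main} for the moving strands, and to add a dedicated treatment of the loop generators \(\alpha_i\). First I would orient the plat closure of the mixed braid \(B \in \mathcal{B}_{g,2n}\) exactly as in Section~\ref{main_result}: give the leftmost moving strand a downward orientation, propagate it through the plat closure arcs, and pick up any remaining components by orienting their first unoriented (odd) moving strand downward. I would then classify every elementary crossing \(\sigma_j\) among the moving strands into one of the four types \(a, b, c, d\) of Fig.~\ref{fig:crossings}, and apply an \(S\)-move to each \(b, c, d\)-type crossing, converting it to \(a\)-type. The column management given by equations (\ref{eqn_C_a})--(\ref{eqn_C_d}), the top- and bottom-part management, and the trivial/empty-column cases all carry over unchanged, because the fixed subbraid \(I_g\) stays pointwise fixed and the plat closure arcs lie to the right of \(B\), so that the isotopies on the moving strands never disturb the holes of the handlebody.

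The genuinely new ingredient is the loop generators. A generator \(\alpha_i\) is a looping of the first moving strand around the \(i\)-th fixed strand, and in a general word of \(\mathcal{B}_{g,2n}\) such loops may be carried by any moving strand via conjugation by powers of \(\sigma_1\). My approach is to treat a loop as data attached to the moving strand carrying it and to transport this data through each \(S\)-move. Concretely, when an up-arc carrying one or more loops is eliminated by an \(S\)-move, its two isotopy spikes run entirely over (or entirely under) the whole diagram, \emph{including} the fixed strands, by Definition~\ref{def:s_move}; hence the looping is slid intact onto the two resulting down-arcs, and I re-express it algebraically using the mixed-braid relations -- the commutations \(\alpha_r \sigma_k = \sigma_k \alpha_r\) for \(k \ge 2\) to move a loop past crossings on non-adjacent strands, and conjugation by \(\sigma_1^{\pm 1}\) to re-index which moving strand carries the loop once new strands have been inserted on the left or right of the column. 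Because \(I_g\) is never moved, no new crossings between the moving part and the fixed part are created, so the output is again a braid in which every up-arc has been removed.

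Having eliminated all up-arcs, the newly created strands and the strands from trivial columns lie entirely above or below the rest of the mixed braid; by the \(H_g\)-analogue of Lemma~\ref{prop:canon_plat} these may be absorbed into standard-closure arcs, yielding a mixed braid \(\beta \in \mathcal{B}_{g,m}\) whose standard closure is the same link \(L\) in \(H_g\). For the complexity, the \(\sigma_j\)-part contributes exactly the count of Theorem~\ref{th:main}: one substitution per \(b,c,d\)-crossing, with \(\sum_{i}(c_i^2 - c_i) \le N^2 - N\) controls to fix the signs \(\delta_i\), hence \(O(N^2)\); transporting the loops adds only a bounded number of conjugations and commutations per loop, i.e.\ a linear overhead, so the total complexity stays \(O(N^2)\). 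I expect the main obstacle to be precisely the algebraic bookkeeping of the loop transport: one must check that the conjugated expression assigned to the two new down-strands faithfully reproduces the original looping around the \(i\)-th fixed strand, for each crossing type and for both over- and under-arcs, and that this rewriting does not secretly reintroduce a \(b, c\) or \(d\)-type crossing that the algorithm would then have to process anew.
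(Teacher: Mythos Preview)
Your treatment of the \(\sigma_j\)-crossings correctly mirrors the paper, but your handling of the loop generators \(\alpha_i\) has a genuine gap. The paper does \emph{not} transport loops through the \(S\)-moves. It observes that the new strands created by an \(S\)-move in column \(j \ge 1\) stay inside that column and never reach the \(\alpha_i\)'s in column~0; after all \(\sigma_j\)-type \(S\)-moves are done, an \(\alpha_i\) may still lie on an upward-oriented strand. The paper then applies a separate \emph{twist} move (Fig.~\ref{fig:twist_a}) that replaces an upward \(\alpha_i^{\pm 1}\) by a downward \(\alpha_i^{\mp 1}\) together with a new \(d\)-type crossing in column~0, and performs a final round of \(S\)-moves in column~0, writing explicit words for the twisted and non-twisted loops.

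Your proposal is missing exactly this orientation-reversing step. The \(S\)-move spikes are vertical---they run to the top and bottom of the braid but stay horizontally within their column---so your assertion that they pass ``over (or under) the whole diagram, including the fixed strands'' misreads Definition~\ref{def:s_move}. More to the point, commuting \(\alpha_r\) past \(\sigma_k\) (\(k\ge 2\)) or conjugating by \(\sigma_1^{\pm 1}\) only re-indexes which moving strand carries a loop; neither relation reverses the direction in which that strand traverses the loop. After your \(\sigma_j\)-processing you will therefore still have upward-oriented \(\alpha_i\)'s and no mechanism to braid them. The obstacle you yourself anticipate at the end is not mere bookkeeping: it is the missing twist of Fig.~\ref{fig:twist_a}, which is the key new ingredient in the handlebody case.
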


\begin{proof}
This way round, from plat closure to standard closure, needs more attention than the previous case. Let \(B\in \mathcal{B}_{g, 2n}\) be a mixed braid representing \(L\) via plat closure. View abstraction in rightmost illustration of Fig.~\ref{fig:mixed_tangle_2}. After giving an orientation to the moving part of our mixed braid, the situation is very similar to the one that we have already described in Section \ref{construction}, but with one difference: now we can  also have loop generators, oriented upward or downward. 

As for the case of \(\mathbb{R}^3\), we identify the space between moving strand \(i\) and moving strand \(i+1\) as the \(i\)-th column. We then call ``0''-th column the one defined between the \(g\)-th fixed strand and the first moving strand of the mixed braid. Now we are ready to apply our transformation for turning \(B\) into an algebraic mixed braid \(\beta\) with equivalent closure.

We first deal with the crossings between moving strands. These are treated exactly as in Theorem \ref{th:main}, since the moving strands are all gathered in the right-hand-side of the mixed braid. Furthermore, as their numbering is the same as for classical braids, formulae (\ref{eqn_C_a}), (\ref{eqn_C_b}), (\ref{eqn_C_c}) and (\ref{eqn_C_d}) apply directly on the columns $1, 2, \ldots, 2n-1$ of the word \(B\in \mathcal{B}_{g, 2n}\). The only point of concern is the first column, from which loop generators may be reaching out to the fixed strands. Yet, according to our algorithm, the new braid strands (red and blue) created by the applications of \(S\)-moves, lie entirely within the columns where the   \(S\)-moves are applied. This means that the loop generators will not be affected by the above formulae. After dealing with all moving crossings of types \(b, c\) and \(d\), we are left with only type-\(a\)  crossings.

\begin{figure}[h!]
    \centering
    \includegraphics[width=.72\textwidth]{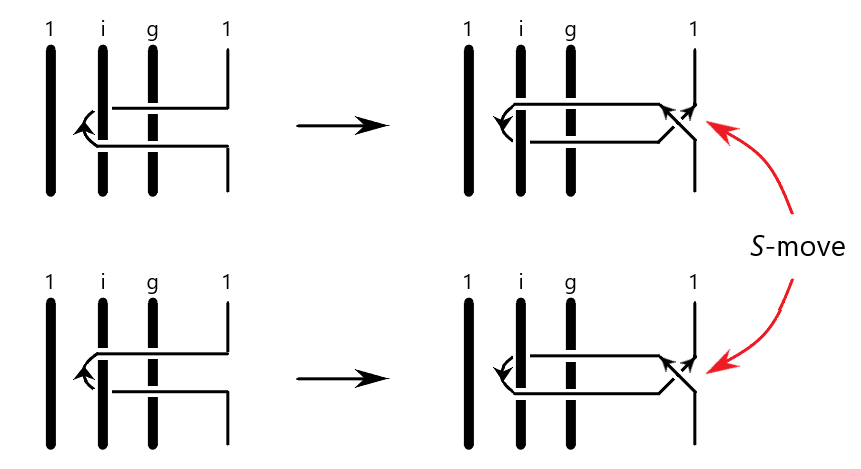}
    \caption{Twisting an \(\alpha_i^k\) oriented upward changes \(k\) and generates a \(d\)-type crossing.}
    \label{fig:twist_a}
\end{figure}

We shall next deal with the loop generators $\alpha_1, \ldots, \alpha_g$ and their inverses. In the final mixed braid with standard closure that we will obtain, all loop generators and their inverses need to be oriented downward. So, if a loop generator (or its inverse) is oriented downward it remains unchanged. If, however, a loop generator or its inverse is oriented upward, we perform a twist following the isotopy depicted in Fig.~\ref{fig:twist_a}, causing a change of its orientation and its type (from unoriented positive to inverse and vice-versa) and creating at the same time  a \(d\)-type moving crossing in the 0-th column. 

The next thing to do is to perform an \(S\)-move for every \(d\)-type moving crossing. Assume we have  \(k\) loop twists and, thus, \(S\)-moves in total. When we focus on the result in the 0-th column, we have (see Fig.~\ref{fig:0_column_S}):
\begin{itemize}
    \item any non-twisted \(\alpha_i^{\pm 1}\) becomes \ \(\sigma_{2k} \sigma_{2k-1} \dots \sigma_{k+1} \sigma_{k}^{-1} \sigma_{k-1}^{-1} \dots \sigma_1^{-1} \alpha_i^{\pm 1} \sigma_1 \sigma_2 \dots \sigma_{k} \sigma_{k+1}^{-1} \sigma_{k+2}^{-1} \dots \sigma_{2k}^{-1}\)
    \item any twisted  \(\alpha_i^{\pm 1}\) becomes \  \(\alpha_i^{\mp 1} \sigma_1 \sigma_2 \dots \sigma_{k-1} \sigma_{2k} \sigma_{2k-1} \dots \sigma_{k+1} \sigma_k\)
\end{itemize}
Of course, if we have two \(S\)-moves one after the other, without any \(\sigma_1\) in the middle, we will have to perform a Reidemeister~I move in order to bring the braid into standard position, as shown in Fig.~\ref{fig:0_column_S}. 

The last thing to do is the top part and bottom part management for the plat closure arcs. Again, as these do not interfere with the fixed strands, they are dealt exactly as described in Section~\ref{top_bottom_management} for the case of \(\mathbb{R}^3\).

Note finally that all the substitutions made on the \(\alpha_i\) generators are done in linear time, because all the \(S\)-moves done in the first column are of the same type. Then, following the line of reasoning of \ref{main_result}, the computational complexity remains quadratic in the number of the \(\sigma_i\) generators.

\begin{figure}[h!]
    \centering
    \includegraphics[width=.6\textwidth]{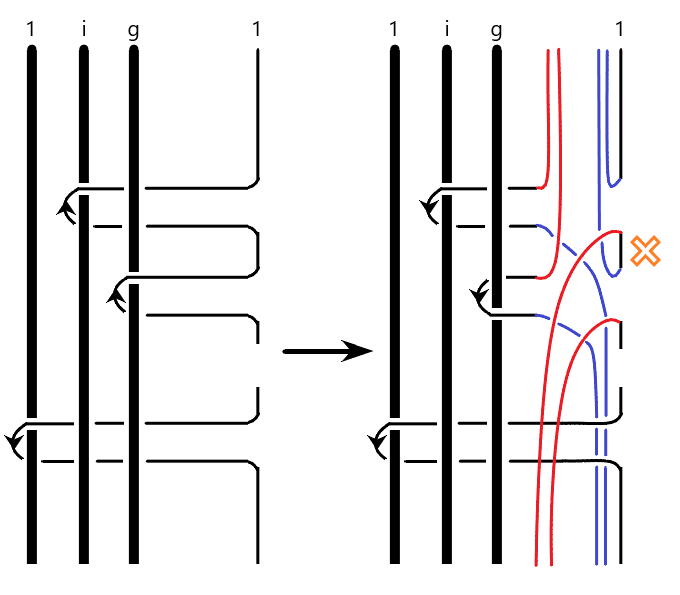}
    \caption{The result after all the twists and \(S\)-moves on the 0-th column. }
    \label{fig:0_column_S}
\end{figure}

\end{proof}

\begin{example} \rm 
In Fig.~\ref{fig:example_plat_standard_hand} we illustrate an example for passing from a mixed braid representing a link via plat closure to a mixed braid representing the same link but via standard closure in the handlebody setting. First we give an orientation to the link, then we apply all the \(S\)-moves. Thereafter we remove all the extra crossings and twist the loop generators oriented upward. We apply the last \(S\)-moves in the 0-th column and remove again all the extra crossings, obtaining the final braid which represents the same link class but via standard closure.
\end{example}

\begin{figure}[h!]
    \centering
    \includegraphics[width = \textwidth]{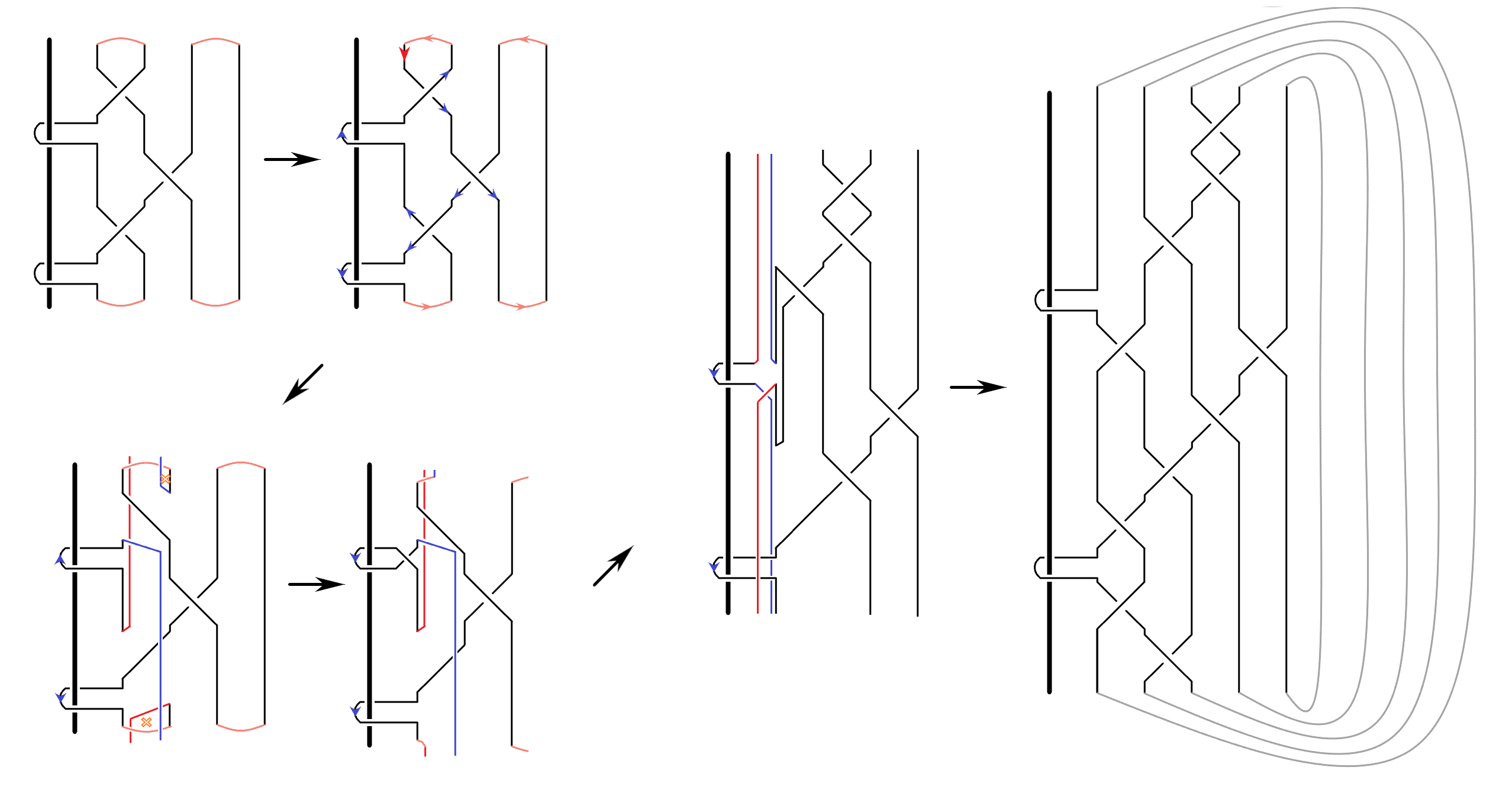}
    \caption{An example of passing from a mixed  braid in plat closure to one in standard closure in the handlebody setting.  }
    \label{fig:example_plat_standard_hand}
\end{figure}

As a corollary of  Theorem~\ref{teo:handleb_plat_standard} and by results in \cite{haring2002knot} (recall Section~\ref{section:mixed_braid}), we have proven the analogue of the Alexander theorem for a link in a handlebody being isotopic to the plat closure of a mixed braid: 
\begin{theorem}
Let \(H_g\) be a genus \(g\) handlebody. Every  link \(L\) in \(H_g\) may be braided to a mixed braid in \(\mathcal{B}_{g,n}\), the plat closure of which is isotopic to \(L\). 
\end{theorem}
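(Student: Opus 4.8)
The plan is to obtain this statement as a short corollary, in exactly the spirit in which Theorem~\ref{alternativeplatbraiding} was deduced in the $\mathbb{R}^3$ case: one composes the \emph{existence} of a standard-closure representative of $L$ with the standard-to-plat transformation already available in the handlebody setting. The only ingredient external to the present section is the handlebody analogue of the Alexander theorem, which supplies some mixed-braid representative of $L$ to start from.

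First I would fix an arbitrary orientation on each component of $L$ and invoke the results of \cite{haring2002knot}, recalled in Section~\ref{section:mixed_braid}: there exists a mixed braid $\beta \in \mathcal{B}_{g,m}$, for some $m$, whose standard closure is isotopic to $L$ in $H_g$. This furnishes the standard-closure representative that will serve as the starting point. I would then feed $\beta$ into Theorem~\ref{th:stand_plat_h}, which algorithmically converts $\beta$ into a mixed braid $B \in \mathcal{B}_{g,2m}$ whose \emph{plat} closure is isotopic to the standard closure of $\beta$, and hence to $L$. Setting $n = 2m$ and forgetting the auxiliary orientation (the plat being unoriented), one obtains a mixed braid in $\mathcal{B}_{g,n}$ whose plat closure is isotopic to $L$, which is exactly the assertion.

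I do not expect a genuine obstacle here, since all the real work is contained in the two results being chained together, $L \to \beta \to B$. The single point that must be recorded---and which is already verified inside the proof of Theorem~\ref{th:stand_plat_h}---is that the standard-to-plat isotopy (sliding each moving closing arc behind, or over, the braid as in Fig.~\ref{fig:L-C} and Fig.~\ref{fig:mixed_tangle_2}) never meets the fixed subbraid $I_g$ encoding the holes of $H_g$: the standard closing arcs lie entirely to the right of all moving strands, so the isotopy takes place within the handlebody and the output $B$ is a bona fide element of $\mathcal{B}_{g,n}$ whose plat closure represents $L$.
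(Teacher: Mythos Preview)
Your proposal is correct and follows the approach the paper itself announces earlier in Section~\ref{Handlebody}: ``We will also prove the braiding here as a corollary, after giving a transformation from standard to plat closure.'' You chain the existence of a standard-closure representative from \cite{haring2002knot} with Theorem~\ref{th:stand_plat_h}, exactly as intended. Note that the paper's one-line justification of the corollary actually cites Theorem~\ref{teo:handleb_plat_standard} (the plat-to-standard direction), which appears to be a slip; the logically correct reference---and the one consistent with the paper's own earlier description---is Theorem~\ref{th:stand_plat_h}, which is precisely what you invoke.
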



\begin{remark} \rm
The loop generators of the mixed braid group lead to a further consideration. Namely, a (classical or mixed) pure braid can be expressed in terms of loop generators (\cite{artin1947theory},\cite{lambropoulou2000braid}) and this can be extended to any (classical or mixed) braid, namely it can be expressed as a composition of a pure braid and a standard permutation braid (see \cite{chow48pure},\cite{lambropoulou2000braid}). So, both in the classical case and the handlebody case, one could use in the algorithm for passing from plat to standard closure the technique of twisting the opposite oriented pure braid loop generators (instead of classical crossings) and handling a controlled number of classical crossings. 
We would like to thank Dimitrios Loulas for providing the idea of using pure braid generators in place of crossings. Even though the number of extra strands that we create during this different algorithm can be slightly optimized than in our standard case, the computational complexity of the algorithm is not improved. For this reason, it could be useful to use this remark when looking for a lower bound for the creation of extra strands while passing from one closure to the other. The above considerations apply also to the thickened surface case discussed next.

\end{remark}

\section{The thickened surface case} \label{thickened}

\subsection{Surface braid groups}

Let  \(\Sigma_g\) be a c.c.o. genus \(g\) surface and let \(\Sigma_g \times [0,1]\) be a thickening of \(\Sigma_g\), view Fig.~\ref{fig:thickened_surface}.

\begin{figure}[h!]
    \centering
    \includegraphics[width = .52\textwidth]{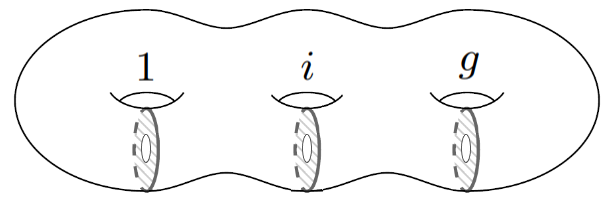}
    \caption{Abstract illustration  of a thickened surface of genus 3. }
    \label{fig:thickened_surface}
\end{figure}

Consider further the set of points \(\mathcal{P} = \{P_1, \dots, P_n\}\)  in \(\Sigma_g\). Following \cite{bellingeri2004presentations}, the \emph{surface braid group} \(\mathcal{B}_{\Sigma_g, n}\) is defined as the fundamental group of the configuration space of \(\mathcal{P}\) in \(\Sigma_g\). This is the group with elements  \(n\)-tuples \(\Psi = (\psi_1, \dots, \psi_n)\) of paths \(\psi_i : [0,1] \rightarrow \Sigma_g\) such that \(\psi_i(0) = P_i, \ \psi_i(1) \in \mathcal{P}, \forall i \in \{1, \dots, n\}\) and \(\psi_1(t), \dots, \psi_n(t)\) are distinct points of \(\Sigma_g, \forall t \in [0,1]\). It is important to note that, by the definition of the surface braid group \(\mathcal{B}_{\Sigma_g, n}\), it is always possible to consider the points in \(\mathcal{P}\) lying in a small disc \(A \subset \Sigma_g\) away from the genus holes (see \cite{bellingeri2004presentations},\cite{cattabriga2018markov}), as illustrated in Fig.~\ref{fig:thick_points}. 

\begin{figure}[h!]
    \centering
    \includegraphics[width = .7\textwidth]{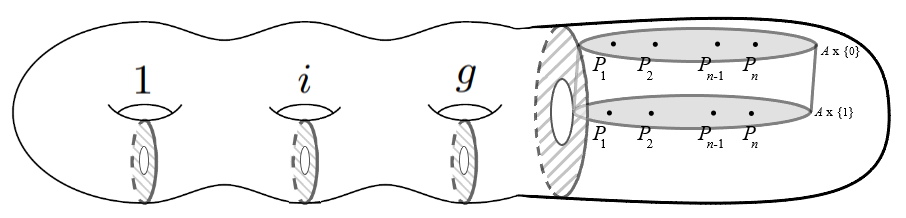}
    \caption{The set of points \(P_1, \dots, P_n\) on the two levels \(\Sigma_g \times \{0\}\) and \(\Sigma_g \times \{1\}\).}
    \label{fig:thick_points}
\end{figure}

In \cite{bellingeri2004presentations} Bellingeri obtains a presentation for the surface braid group \(\mathcal{B}_{\Sigma_g, n}\), as follows: the generators of the group are: 
\begin{itemize}
    \item crossings \(\sigma_j, j = 1, \dots, n-1\), which exchanges points \(P_{j}\) and \(P_{j+1}\), and keeps fixed all other points;   
    \item loop generators \(a_i, i = 1, \dots, g\), which keep fixed all points but \(P_1\) and loop around the longitude of the \(i\)-th genus  hole;  
    \item loop generators \(b_i, i = 1, \dots, g\), which keep fixed all points but \(P_1\) and loop  around the meridian of the \(i\)-th handle.  
\end{itemize}
An illustration of the generators of \(\mathcal{B}_{\Sigma_g, n}\) as elements of the fundamental group of the configuration space of \(\mathcal{P}\) in \(\Sigma_g\), is given in Fig.~\ref{fig:generators_surface}, where we have dropped the notation $P_i$ for the $i$th point. 

\begin{figure}[h!]
    \centering
    \includegraphics[width = .8\textwidth]{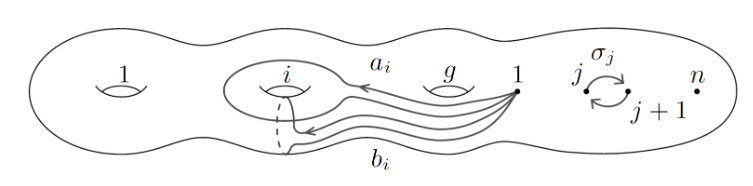}
    \caption{The generators of \(\mathcal{B}_{\Sigma_g,n}\) represented on the surface \(\Sigma_g\). }
    \label{fig:generators_surface}
\end{figure}

A complete set of relations for \(\mathcal{B}_{\Sigma_g,n}\) is the following: 
\begin{itemize}
    \item Braid relations: 
        \begin{align*}
            \sigma_j \sigma_{j+1} \sigma_j & = \sigma_{j+1} \sigma_{j} \sigma_{j+1}, \qquad & j & = 1, \dots, 2n-2 \\
            \sigma_j \sigma_k  & = \sigma_k \sigma_j, \qquad & j,k &= 1, \dots, 2n-1, \>\> |j-k| \geq 2 \\
        \end{align*}
    \item Mixed relations: 
    \begin{align*}
        a_r \sigma_j & = \sigma_j a_r, \qquad & 1 &\leq r \leq g, j\neq 1 \\
        b_r \sigma_j & = \sigma_j b_r, \qquad & 1 &\leq r \leq g, j\neq 1 \\
        \sigma_1^{-1} a_r \sigma_1^{-1} a_r & = a_r \sigma_1^{-1} a_r \sigma_1^{-1}, \qquad & 1 &\leq r\leq g \\
        \sigma_1^{-1} b_r \sigma_1^{-1} b_r & = b_r \sigma_1^{-1} b_r \sigma_1^{-1}, \qquad & 1 &\leq r\leq g \\
        \sigma_1^{-1} a_s \sigma_1 a_r & = a_r \sigma_1^{-1} a_s \sigma_1, \qquad & s &< r  \\
        \sigma_1^{-1} b_s \sigma_1 b_r & = b_r \sigma_1^{-1} b_s \sigma_1, \qquad & s &< r \\
        \sigma_1^{-1} a_s \sigma_1 b_r & = b_r \sigma_1^{-1} a_s \sigma_1, \qquad & s &< r \\ 
        \sigma_1^{-1} b_s \sigma_1 a_r & = a_r \sigma_1^{-1} b_s \sigma_1, \qquad & s &< r \\ 
        \sigma_1^{-1} a_r \sigma_1^{-1} b_r & = b_r \sigma_1^{-1} a_r \sigma_1, \qquad & s &< r \\ 
        [a_1, b_1^{-1}] \dots [a_g, b_g^{-1}] & = \sigma_1 \sigma_2 \dots \sigma_{2n-1}^2 \dots \sigma_2 \sigma_1  \\
    \end{align*} 
\end{itemize} 
where \([a,b^{-1}] = ab^{-1}a^{-1}b\).

\begin{note}\label{note:cylinder_A} \rm
For geometric interpretations of surface braids we consider the lifting of a surface braid \(B\), such that all the \(\sigma_j\) generators are lying  in the cylinder \(A \times I\), as illustrated in Fig.~\ref{fig:thick_points}. In this setting, the crossing generators \(\sigma_j\) are lifted as depicted in Fig.~\ref{fig:generat_L}. The loop generators \(a_i\) are  lifted as depicted in the right-hand side of  Fig.~\ref{fig:catta_lambro_a_i} (see \cite{lambropoulou2000braid}). In the  left-hand side of the figure, the loop generators \(\alpha_i\) of the mixed braid group \(\mathcal{B}_{g,n}\) are illustrated (recall Note \ref{note_notation_alpa}). Finally, the loop generators \(b_i\)  around the handles and their inverses are  represented in \cite{cattabriga2018markov} by abstract arrows, as illustrated in Fig.~\ref{fig:generat_B}. 

The above geometric interpretations allow us to further represent in $\mathbb{R}^3$ surface braids in \(\mathcal{B}_{\Sigma_g, n}\) as augmented classical braids in \(\mathcal{B}_{g+n}\) with the first \(g\) strands fixed, forming the identity braid $I_g$, and the last \(n\) strands moving.  Namely as {\it augmented mixed braids} in the mixed braid group \(\mathcal{B}_{g,n}\) \cite{cattabriga2018markov}. So, a surface braid can be represented as a mixed braid augmented by abstract arrows. As for \(\mathcal{B}_{g,n}\) and the case of handlebody, the fixed subbraid $I_g$ represents topologically the genus holes of \(\Sigma_g\).  An example of a surface braid in this setting is depicted in Fig.~\ref{fig:surface_braid_example}.
\end{note}

\begin{figure}[h!]
    \centering
    \includegraphics[width = .65\textwidth]{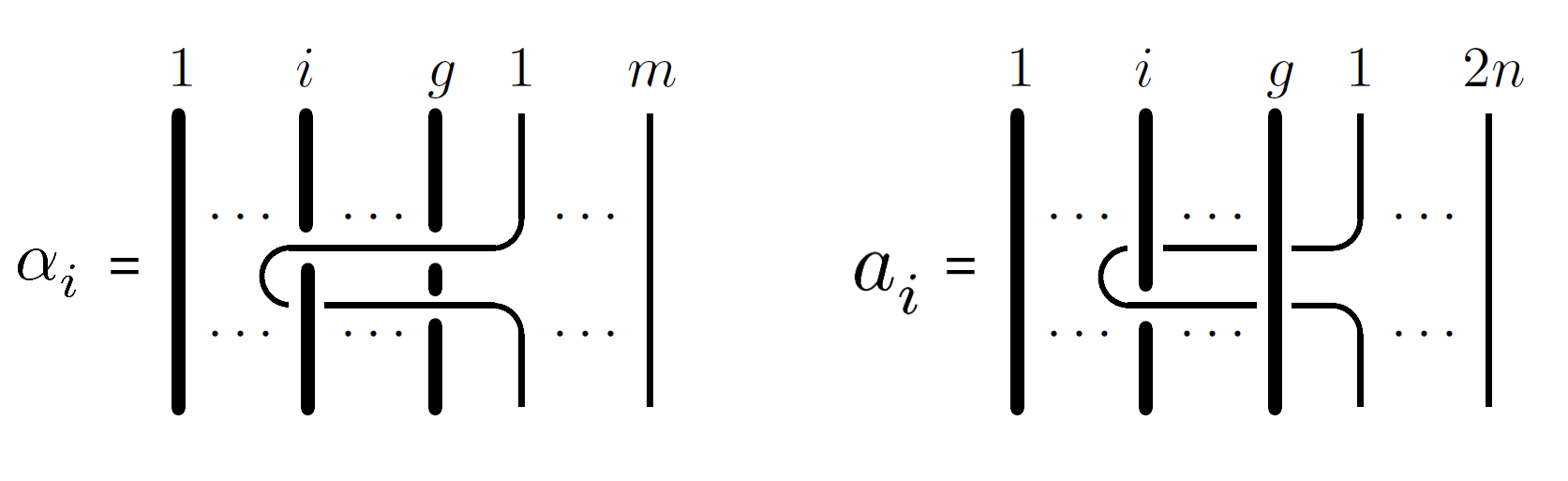}
    \caption{Two different loop generators around the genus: on the left, the ones used in \cite{lambropoulou2000braid} for the handlebody, and on the right the ones used in \cite{cattabriga2018markov} for the thickened surface. }
    \label{fig:catta_lambro_a_i}
\end{figure}

\begin{figure}[h!]
    \centering
    \includegraphics[width = .65\textwidth]{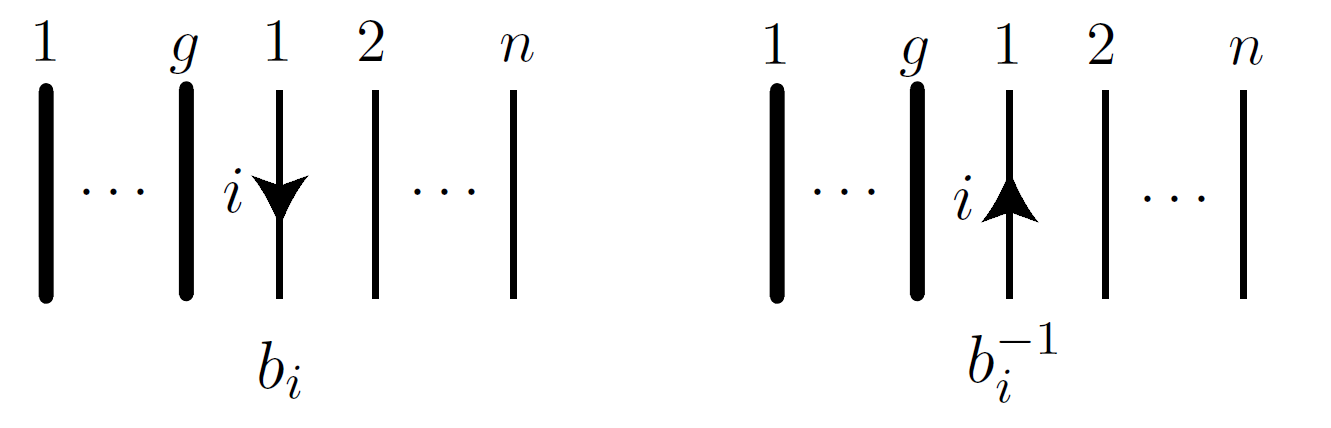}
    \caption{The \(b_i\) generator of \(\mathcal{B}_{\Sigma_g,n}\). Note that the small arrow is added only to clarify the number of the hole and the direction around it.}
    \label{fig:generat_B}
\end{figure}

\begin{figure}[h!]
    \centering
    \includegraphics[width = .4\textwidth]{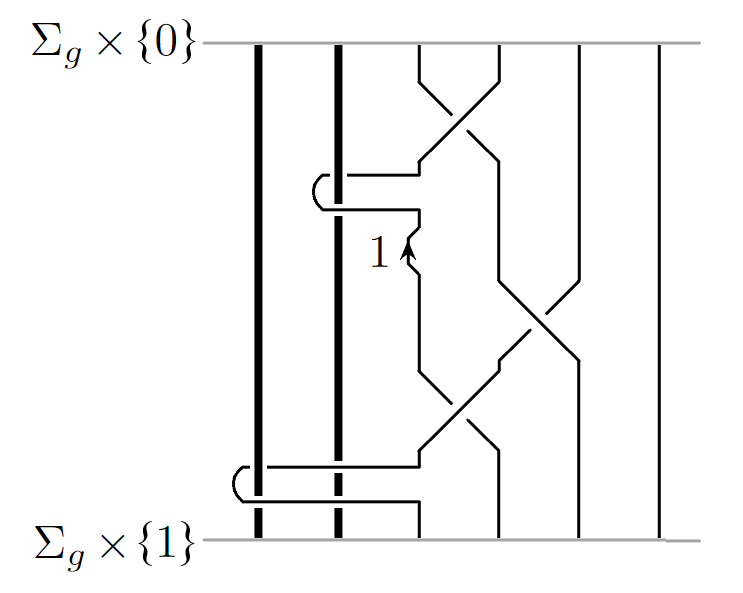}
    \caption{An example of surface braid. }
    \label{fig:surface_braid_example}
\end{figure}

\begin{note} \rm
As we see in Fig.~\ref{fig:catta_lambro_a_i}, the definitions of the  two genus loop generators \(\alpha_i\) in \cite{lambropoulou2000braid} and \(a_i\) in \cite{cattabriga2018markov}  are slightly different. More precisely, they are related via the equation: 
\begin{equation}\label{eqn_alpha_a}
\alpha_i^k = a_g^{-1} a_{g-1}^{-1} \dots a_{i+1}^{-1} a_i^{-k} a_{i+1} \dots a_{g-1} a_g, \qquad k = \pm 1 
\end{equation}
The first step of the conversion from $\alpha_i^k$ to $a_i^{-k}$ in Equation (\ref{eqn_alpha_a})  is depicted in Fig.~\ref{fig:pass_alfa_a}. 
\end{note}

\begin{figure}[h!]
    \centering
    \includegraphics[width=.8\textwidth]{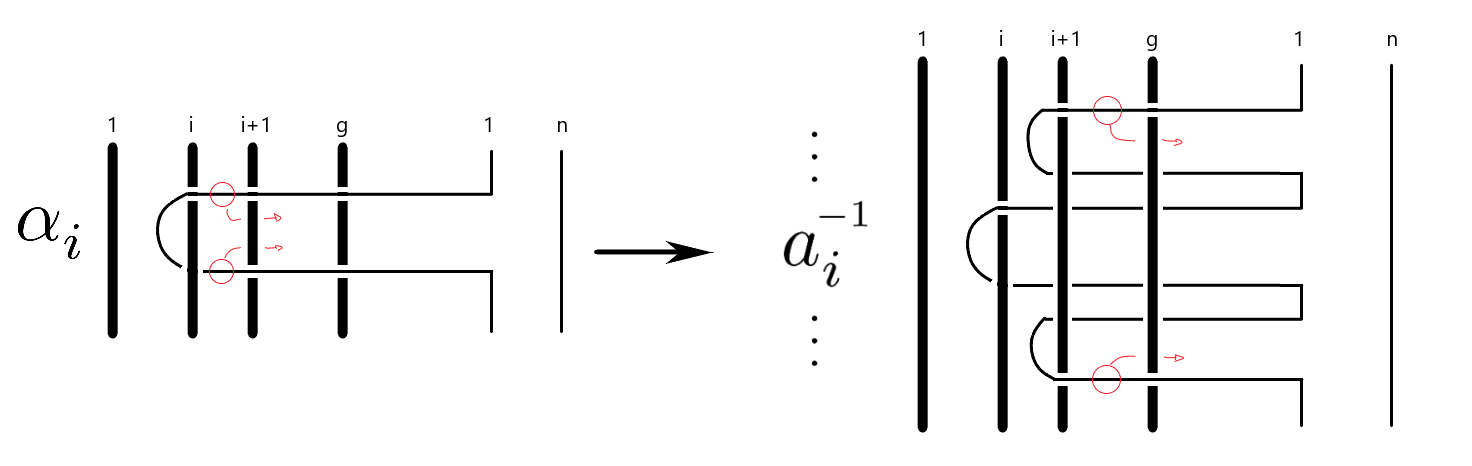}
    \caption{Transformation from \cite{lambropoulou2000braid} to \cite{cattabriga2018markov}. }
    \label{fig:pass_alfa_a}
\end{figure}

\subsection{The plat closure and the standard closure for surface braids} \label{subsec:plat_stand_thick}

\begin{definition}\label{def:plat_closure_thick}
Let \(B\) be an element of some surface braid group \(\mathcal{B}_{\Sigma_g, 2n}\), where \(\mathcal{P}\) has even cardinality \(2n\). {\it The plat closure} of \(B\) is defined, in analogy to the classical case and the handlebody case, by using small simple arcs \(\subset A\) connecting points \(P_{2i-1}\) and \(P_{2i}, \ i= 1, \dots, n\), both at level \(\{0\}\) and \(\{1\}\), see Fig.~\ref{fig:thick_plat_closure_A}. The resulting unoriented surface link shall be referred to as {\it surface plat}.
\end{definition}

Note that, as in the case of mixed braids, the notion of plat closure in the definition above is well-defined, by recalling that $B$ is an augmented element in $\mathcal{B}_{g,2n}$ and thus also in  $\mathcal{B}_{g+2n}$ and adapting the definition of plat closure for a classical braid. Note, further, that Definition~\ref{def:plat_closure_thick} may be extended also in the case of an {\it odd } number of moving strands, using the same rationale as in $S^3$ and in the case of handlebody. Recall discussion after Definition~\ref{mixedplat}.

\begin{figure}[h!]
    \centering
    \includegraphics[width = .7\textwidth]{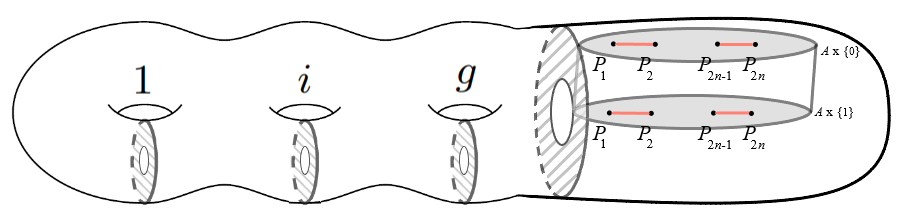}
    \caption{An example of plat closure in our setting. Note that here the number of points is even. }
    \label{fig:thick_plat_closure_A}
\end{figure}


\begin{definition}
Given a braid \(\beta \in \mathcal{B}_{\Sigma_g, m}\), we define {\it the standard closure} of \(\beta\) as the one obtained using simple arcs connecting the corresponding points of \(\mathcal{P}\) at level \(\{0\}\) and at level \(\{1\}\) on the boundary of \(A \times I\), following the red arcs as in Fig.~\ref{fig:standard_closure_thick}.
\end{definition}

Note that, as in the case of mixed braids, the notion of standard closure in the definition above is well-defined, by recalling that $\beta$ is an augmented element in $\mathcal{B}_{g,2n}$ and thus also in  $\mathcal{B}_{g+2n}$ and adapting the definition of standard closure for a classical braid.
 
\begin{figure}[h!]
    \centering
    \includegraphics[width = .7\textwidth]{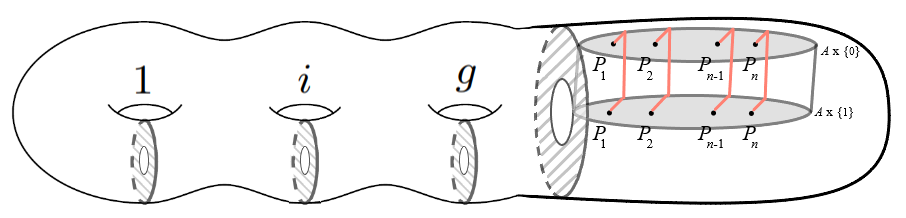}
    \caption{The representation of standard closure in a thickened surface. The red arcs are on the boundary of \(A\times I\).}
    \label{fig:standard_closure_thick}
\end{figure}

A braiding algorithm for unoriented links in thickened surfaces represented as plat closures of surface braids having an even number of moving strands is presented in \cite{cattabriga2018markov}, as well as a plat braid equivalence (analogue of the Markov theorem). The representation of oriented links in thickened surfaces as standard closures of surface braids (as well as their equivalence) is the subject of  \cite{nextone}. We will also prove the braiding here as a corollary, after showing the existence of a transformation from plat to standard closure. 

\subsection{Passing from standard closure to plat closure}

For the passage from standard closure to plat closure of a surface braid, we use the same construction of Section \ref{from_standard}, obtaining the following: 
\begin{theorem}
Let \(\Sigma_g\) be a c.c.o. surface of genus \(g\). Given a link \(L\subset N(\Sigma_g)\), a neighbourhood of \(\Sigma_g\), represented by a braid \(\beta\in \mathcal{B}_{\Sigma_g, m}\) with standard closure, it is algorithmically possible to generate a braid \(B\in \mathcal{B}_{\Sigma_g,2m}\) which represents an isotopic link but with plat closure. The algorithm has a computational complexity of \(O(M)\), where \(M\) is the number of crossings present in the braid \(\beta\).
\end{theorem}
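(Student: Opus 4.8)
The plan is to reduce this theorem to the already-established $\mathbb{R}^3$ result, Theorem~\ref{alternativeplatbraiding}, by exploiting the augmented-mixed-braid picture of surface braids described in Note~\ref{note:cylinder_A}. Since a surface braid $\beta \in \mathcal{B}_{\Sigma_g, m}$ is represented as an augmented mixed braid in $\mathcal{B}_{g,m}$, which in turn embeds in the classical braid group $\mathcal{B}_{g+m}$, the moving strands carrying the $\sigma_j$ crossings are gathered on the right-hand side of the cylinder $A \times I$, entirely away from the genus holes and the loop generators $a_i, b_i$. First I would invoke the braiding algorithm of \cite{cattabriga2018markov} (together with \cite{nextone} for the oriented standard-closure version) to guarantee that $L$ is indeed represented by such a $\beta$ with standard closure, thereby justifying the starting data of the theorem.

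The core of the argument is then to perform precisely the same closing-arc isotopy as in Section~\ref{from_standard}: for each $i$, pull the $i$-th standard closing arc (which lies on the boundary of $A \times I$, following the red arcs of Fig.~\ref{fig:standard_closure_thick}) by an isotopy to the left and behind the braid, until it arrives just to the right of the $i$-th moving strand, where it splits into the $(2i-1)$-th and $2i$-th moving strands of the plat representation. The crucial observation, exactly as in the handlebody proof of Theorem~\ref{th:stand_plat_h}, is that this isotopy is supported entirely within the disc $A$ on the right-hand portion of the diagram, so it never interacts with the fixed subbraid $I_g$ nor with the loop generators $a_i$ and $b_i$. Consequently those generators pass through the transformation untouched, and the substitution rule of Equation~(\ref{eqn:transformation}), namely $\sigma_j^{k} \rightarrow \sigma_{2j-1}^{-1} \sigma_{2j}^k \sigma_{2j-1}$ for $k = \pm 1$, governs the transformation of every crossing generator just as in the classical case. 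The output is a surface braid $B \in \mathcal{B}_{\Sigma_g, 2m}$ whose plat closure is the canonical-plat-form version of $L$, and dropping orientation yields the desired isotopic surface plat.

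For the complexity claim, I would note that the algebraic substitution~(\ref{eqn:transformation}) is applied exactly once per $\sigma_j$ crossing and that the loop generators require no modification, so the number of elementary operations is bounded by a constant times the number $M$ of crossings in $\beta$; hence the complexity is $O(M)$, identical to the $\mathbb{R}^3$ and handlebody bounds.

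I expect the only genuine subtlety, as opposed to the routine verification, to be checking that the closing-arc isotopy respects the surface-braid structure: one must confirm that sliding the arcs behind the moving strands can be realized within $A \times I$ without forcing any strand to wind around a genus hole, i.e.\ without spuriously introducing or altering $a_i$ or $b_i$ generators. This is precisely the point where the geometric normalization of Note~\ref{note:cylinder_A}, placing all points $\mathcal{P}$ and all $\sigma_j$ crossings inside the small disc $A$ away from the handles, does the essential work, and it is the place I would argue most carefully; everything else transfers verbatim from the proofs of Theorems~\ref{alternativeplatbraiding} and~\ref{th:stand_plat_h}.
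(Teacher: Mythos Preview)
Your proposal is correct and follows essentially the same approach as the paper: reduce to the $\mathbb{R}^3$ and handlebody cases via the augmented-mixed-braid representation, observe that the closing-arc isotopy stays entirely within $A \times I$ so the $a_i$ and $b_i$ loop generators are unaffected, apply the substitution~(\ref{eqn:transformation}) once per crossing, and conclude linear complexity. The only superfluous step is your invocation of \cite{cattabriga2018markov} and \cite{nextone} to justify the existence of $\beta$, since the theorem already takes $\beta$ as given input; otherwise your argument matches the paper's proof, with somewhat more explicit attention to the geometric subtlety you flag at the end.
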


\begin{proof}
The proof builds on Theorems~\ref{alternativeplatbraiding} and \ref{th:stand_plat_h} concerning the passing from standard to plat closure for the cases of $\mathbb{R}^3$ and that of the handlebody. Since the moving subbraid of a surface braid can be considered to have the endpoints on the right part of the fixed subbraid, it is always possible to isotope the closing arcs behind the moving strands without interfering with the \(a_i\) and \(b_i\) loop generators. The algebraic transformation is still equation (\ref{eqn:transformation}). Furthermore, since the substitution is done once for every crossing,  the computational time is linear in the number of the \(\sigma_i\) generators. 
\end{proof}

\subsection{Passing from plat closure to standard closure}

For the transformation of a given surface braid representing a link in a thickened surface via plat closure to an isotopic one but via standard closure, we shall follow the same algorithm as in Section~\ref{Handlebody}, after observing that  Note~\ref{note:cylinder_A} enables us to use the same definition of \(S\)-move as in $S^3$, since in this case the move is done locally in the cylinder \(A \times I\). All this is possible because we represent surface braids via augmented mixed braids in $\mathbb{R}^3$, so we can easily generalize the algorithm to take care of the \(b_i\) loop generators. 

\begin{theorem}\label{th:thick_sur_plat}
Let \(\Sigma_g\) be a c.c.o. surface of genus \(g\). Given a link \(L\subset N(\Sigma_g)\), a neighbourhood of \(\Sigma_g\), represented by a braid \(B\in \mathcal{B}_{\Sigma_g, 2n}\) with plat closure, it is algorithmically possible to generate a braid \(\beta\in \mathcal{B}_{\Sigma_g,m}\) which represents an equivalent link but with standard closure. This algorithm has a computational complexity of \(O(N^2)\) where \(N\) is the number of crossings present in the braid \(B\).
\end{theorem}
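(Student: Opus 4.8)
The plan is to reduce this case to the handlebody argument of Theorem~\ref{teo:handleb_plat_standard}, exploiting the representation of a surface braid as an \emph{augmented mixed braid} in $\mathbb{R}^3$ provided by Note~\ref{note:cylinder_A}. First I would represent $B \in \mathcal{B}_{\Sigma_g,2n}$ as an element of $\mathcal{B}_{g,2n}$ (hence of $\mathcal{B}_{g+2n}$) together with the abstract $b_i$ arrows, with all the $\sigma_j$ generators confined to the cylinder $A\times I$. The crucial observation, already flagged in the paragraph preceding the statement, is that the $S$-move of Definition~\ref{def:s_move} is a purely local operation inside $A\times I$; consequently the entire machinery of Section~\ref{Handlebody} applies verbatim to the moving strands and the closure arcs, which all live in $A\times I$ and never meet the genus features. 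I would begin by orienting the moving part of the braid exactly as in the standard procedure, so that every crossing acquires one of the types $a,b,c,d$.

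Next I would dispose of the crossings among the moving strands. Because the moving strands are gathered on the right of the fixed subbraid $I_g$ and carry the same numbering as in the classical case, the $S$-moves on all $b,c,d$-type crossings are governed \emph{unchanged} by the column formulae (\ref{eqn_C_a}), (\ref{eqn_C_b}), (\ref{eqn_C_c}), (\ref{eqn_C_d}), together with the Reidemeister-I corrections tabulated there; after this step only $a$-type moving crossings remain. I would then treat the longitude loop generators $a_i$ precisely as the $\alpha_i$ were treated in Theorem~\ref{teo:handleb_plat_standard}: any $a_i^{\pm1}$ oriented downward is left alone, while any one oriented upward is twisted by the isotopy of Fig.~\ref{fig:twist_a}, flipping its orientation and creating a $d$-type crossing in the $0$-th column, which is then braided away by an $S$-move. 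If a direct treatment is preferred, one may instead first convert each $a_i^{\pm1}$ into the $\alpha_i$ form via Equation~(\ref{eqn_alpha_a}) and quote Theorem~\ref{teo:handleb_plat_standard} as a black box; either route yields the same $0$-th-column word displayed in Fig.~\ref{fig:0_column_S}.

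The genuinely new ingredient, and the step I expect to be the main obstacle, is the handling of the meridian loop generators $b_i$. These are represented only by abstract arrows, so I would need to verify that the same twist-and-$S$-move recipe is legitimate for them: namely that twisting an upward-oriented $b_i^{\pm1}$ reverses its orientation and produces a single $d$-type crossing in the $0$-th column, so that the resulting strands again lie entirely within that column and do not disturb the $a_j$ arrows, the fixed strands, or the already-processed moving crossings. Since every loop generator emanates from the first moving strand and the twist isotopy is supported near that strand and the relevant handle, the arguments interact only through the $0$-th column, where all the created $S$-move strands are of the same type and can be ordered so as not to cross one another, exactly as in Fig.~\ref{fig:0_column_S}. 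Care is needed to confirm that simultaneously reorienting all loop generators (both $a_i$ and $b_i$) downward can be carried out independently, which follows from the locality of each twist.

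Finally, the top- and bottom-part management of the plat closure arcs is identical to Section~\ref{top_bottom_management}, since those arcs and the spikes they meet are contained in $A\times I$ and never interact with the fixed strands. For the complexity, the substitutions on the $a_i$ and $b_i$ generators are all $S$-moves of a single type in the $0$-th column and hence cost linear time in the number of loop generators, while the moving crossings contribute, as in the proof of Theorem~\ref{th:main}, at most $\sum_{i}(c_i^2-c_i)\le N^2-N$ controls; the total is therefore $O(N^2)$ in the number $N$ of crossings, which establishes the claimed bound.
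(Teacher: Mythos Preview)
Your proposal is correct and follows essentially the same approach as the paper: orient the plat, apply the $S$-move machinery to the moving crossings inside $A\times I$ via the column formulae, twist upward-oriented $a_i$ and $b_i$ loop generators to create $d$-type crossings in the $0$-th column and eliminate them with $S$-moves, then perform the top/bottom management and conclude with the same complexity count. The paper carries this out directly with the $a_i$ (without passing through Equation~(\ref{eqn_alpha_a})) and supplies the explicit $0$-th-column words for twisted and non-twisted $a_i$ and $b_i$, but the strategy and the justification are the same as yours.
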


\begin{proof}
After giving an orientation to the surface braid \(B\) in plat form, \(a_i\)s and \(b_i\)s generators will possibly be in an upward going strand. 

In the remaining part of the braid we can apply the algorithm of Theorem \ref{th:main}. Since all the \(S\)-moves are done locally in the cylinder \(A \times I\), it is possible to perform the transformation shown in Fig.~\ref{fig:L-C} (and also the one depicted in Fig.~\ref{fig:pass_in_front}), thus turning the strands containing the \(\sigma_i\) generators from plat to standard closure.

Now we deal with the upward \(a_i\)'s and \(b_i\)'s as shown previously; we will twist the piece containing the \(a_i\) or \(b_i\) generator as in Fig.~\ref{fig:twist_a} and Fig.~\ref{fig:twist_b}. In this way, the generators will change their sign, and a type \(d\) crossing will appear into the 0-th column. After performing the \(S\)-move for every twist done in this way, say \(k\) twists in total, we will have: 
\begin{itemize}
    \item any not twisted \(a_i\) becomes \(\sigma_{2k} \sigma_{2k-1} \dots \sigma_{k+1} \sigma_{k}^{-1} \sigma_{k-1}^{-1} \dots \sigma_1^{-1} a_i \sigma_1 \sigma_2 \dots \sigma_{k} \sigma_{k+1}^{-1} \sigma_{k+2}^{-1} \dots \sigma_{2k}^{-1}\)
    \item any twisted \(a_i\) becomes \(a_i^{-1} \sigma_1 \sigma_2 \dots \sigma_{k-1} \sigma_{2k} \sigma_{2k-1} \dots \sigma_{k+1} \sigma_k\)
    \item any not twisted \(b_i\) becomes \(\sigma_{2k} \sigma_{2k-1} \dots \sigma_{k+1} \sigma_{k}^{-1} \sigma_{k-1}^{-1} \dots \sigma_1^{-1} b_i \sigma_1 \sigma_2 \dots \sigma_{k} \sigma_{k+1}^{-1} \sigma_{k+2}^{-1} \dots \sigma_{2k}^{-1}\)
    \item any twisted \(b_i\) becomes \(b_i^{-1} \sigma_1 \sigma_2 \dots \sigma_{k-1} \sigma_{2k} \sigma_{2k-1} \dots \sigma_{k+1} \sigma_k\)
\end{itemize}
In the same way as before, if we have two \(S\)-moves one after the other, without any \(\sigma_1\) in between, we will have to perform a Reidemeister I move in order to bring the braid into standard position, as shown in Fig.~\ref{fig:0_column_B} and Fig.~\ref{fig:0_column_S}. 

Again note that, since all the substitutions made on the loop generators are done in linear time (the \(S\)-moves done in the first column are of the same type), the computational complexity of the algorithm remains quadratic in the number of the \(\sigma_i\) generators.
\end{proof}

\begin{figure}[h!]
    \centering
    \includegraphics[width=.75\textwidth]{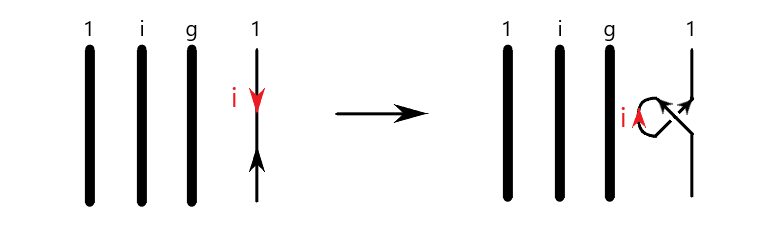}
    \caption{Twisting a \(b_i^k\) belonging to an oriented upward strand changes \(k\) and generates a \(d\)-type crossing. The black arrow marks the orientation of the strand, while the red one marks the direction of \(b_i\) generator.}
    \label{fig:twist_b}
\end{figure}

\begin{figure}[h!]
    \centering
    \includegraphics[width=.31\textwidth]{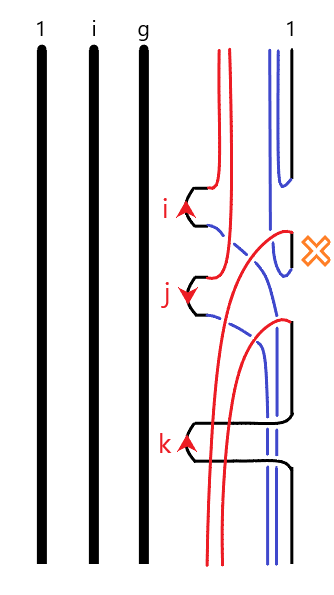}
    \caption{The result after all the twists and \(S\)-moves on the 0-th column over the \(b_i\) generators. The red arrows mark only the direction of the \(b_i\) generators.}
    \label{fig:0_column_B}
\end{figure}

\begin{example}\rm
In Fig.~\ref{fig:example_plat_standard_hand}  is depicted the passing from a braid representing a link via plat closure to a braid representing the same link but via standard closure in the thickened surface setting. First we give an orientation to the link, then we apply all the \(S\)-moves. Thereafter we remove all the extra crossings and twist the loop generators oriented upward. Note that the black arrow represents the sign of the \(b_1\) generator, while the blue one represents its orientation. We apply the last \(S\)-moves in the 0-th column and remove again all the extra crossings obtaining the final braid, which represents the same link class but via standard closure.
\end{example}

\begin{figure}[h!]
    \centering
    \includegraphics[width = \textwidth]{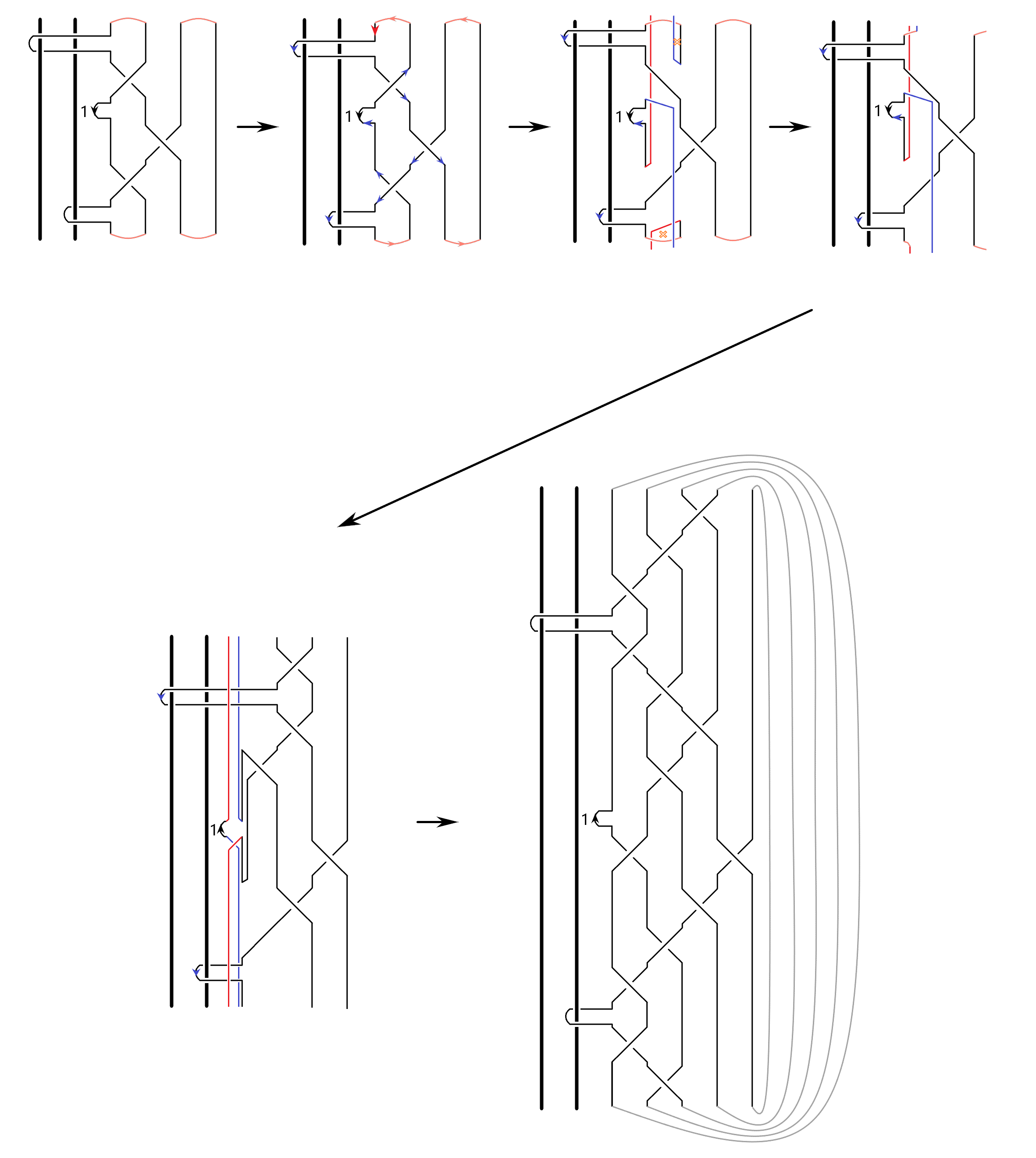}
    \caption{An example of passing from a braid in plat closure to one in standard closure in the thickened surface setting.}
    \label{fig:example_plat_standard_thick}
\end{figure}

As a corollary of  Theorem~\ref{th:thick_sur_plat} and results in  \cite{cattabriga2018markov} (recall the end of Section~\ref{subsec:plat_stand_thick}), we have proven the analogue of the Alexander theorem for an (oriented) link in a thickened surface being isotopic to the standard closure of a surface braid: 
\begin{theorem}
Let \(\Sigma_g\) be a genus \(g\) surface. Every oriented link \(L\) in the thickened surface \(\Sigma_g \times I\) may be braided to a surface braid in \(\mathcal{B}_{\Sigma_g, n}\), the standard closure of which is isotopic to \(L\). 
\end{theorem}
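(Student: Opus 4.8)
The plan is to obtain this as a direct corollary by chaining together two results that are already in hand: the braiding algorithm of Cattabriga--Manfredi \cite{cattabriga2018markov}, which represents an unoriented link in a thickened surface as the plat closure of a surface braid with an even number of moving strands, and Theorem~\ref{th:thick_sur_plat}, which converts such a plat representation into a standard-closure representation. Since the composite of ``braid to a plat'' followed by ``pass from plat to standard'' sends an arbitrary link to the standard closure of a surface braid, the only genuine content beyond quoting these two theorems is keeping track of orientations.

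Concretely, I would proceed as follows. First, forgetting the orientation of $L$, I would invoke \cite{cattabriga2018markov} to produce a surface braid $B \in \mathcal{B}_{\Sigma_g, 2n}$ whose plat closure is isotopic to the underlying unoriented link of $L$; I would record the ambient isotopy $h$ realizing this. Second, I would transport the orientation of $L$ back along $h$ to equip the plat closure of $B$ with an orientation, and then run the plat-to-standard algorithm of Theorem~\ref{th:thick_sur_plat} on this \emph{oriented} plat. That theorem yields a surface braid $\beta \in \mathcal{B}_{\Sigma_g, m}$ whose standard closure is isotopic to the oriented plat closure of $B$; composing with $h$ shows that the standard closure of $\beta$ is isotopic to $L$, which is exactly the assertion.

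The hard part, and essentially the only subtlety, is the orientation bookkeeping: plat closures are intrinsically unoriented, so one must check that the orientation fed into the plat-to-standard algorithm may be taken to be the one pulled back from $L$, and that it survives the whole procedure. This is precisely what Theorem~\ref{th:thick_sur_plat} provides, since its algorithm begins by choosing \emph{an arbitrary} orientation of the plat and every subsequent step is either an orientation-preserving isotopy or an $S$-move; by Definition~\ref{def:s_move} the $S$-move acts only on up-arcs and replaces each by a coherently oriented pair of down-arcs, so the oriented link type is unchanged. Thus choosing the $L$-induced orientation at the outset guarantees that the standard closure of $\beta$ is oriented-isotopic to $L$, completing the argument.
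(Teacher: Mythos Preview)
Your proposal is correct and matches the paper's own approach: the paper states this theorem as an immediate corollary of the plat-braiding result of \cite{cattabriga2018markov} together with Theorem~\ref{th:thick_sur_plat}, exactly as you chain them, without spelling out any further details. One minor correction: the reference \cite{cattabriga2018markov} is Cattabriga--Gabrov\v{s}ek, not Cattabriga--Manfredi; your orientation bookkeeping is a useful elaboration that the paper leaves implicit.
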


\section{The \text{C++} Algorithm} \label{Algorithm}

In this last section we describe the algorithm implemented in \text{C++} which realizes the procedures described in the previous sections. The source code can be found on \href{https://github.com/Paolo-Cavicchioli/plat_standard_representation}{GitHub} \cite{cavicchioligithub2}. The algorithm takes as input the word associated to a link described by a braid in plat resp. standard closure, and then it computes the word associated to an equivalent link represented by a braid via standard resp. plat closure. 

If the algorithm is asked to compute the plat representation, then it just substitutes each crossing according to Equation~(\ref{eqn:transformation}). 
On the other hand, if the algorithm is asked to compute the standard representation, it gives first an orientation to the plat braid. The pseudo-code implementation is the following: 

\begin{algorithm}
\caption{Giving an orientation to the crossings}\label{alg:cap}
\begin{algorithmic}
\While{$i \neq 1$} \\
    \quad \: First cycle $i \gets 1$ 
    \For{generators \(\in\) word from left} 
         \If{generator is \(\sigma_i\) or \(\sigma_{i-1}\)} 
            \State $i$ follows the strand \\ 
            \quad \: \quad \: \quad \, Add counter to the generator 
        \EndIf 
    \EndFor \\
    \quad \: Change strand once arrived at the bottom closure  
    \For{generators \(\in\) word from right} 
        \If{generator is \(\sigma_i\) or \(\sigma_{i-1}\)} 
            \State $i$ follows the strand 
        \EndIf 
    \EndFor \\
    \quad \: Change strand once arrived at the top closure 
\EndWhile
\end{algorithmic}
\end{algorithm}

The variables are all initialized during the orientation part of the program. 
Before applying the algorithm from Theorem~\ref{th:main}, the program checks if there are any strands oriented upward entirely over or under the rest of the braid, and proceeds to eliminate all the crossings where it is the case. In this way the program is optimized if the braid is given in canonical plat form. 

Then the main part starts. First of all, if we are not in the case of \(\mathbb{R}^3\), we deal with column zero. Every \(a_i\) or \(b_i\) generator is substituted with the relative word, with respect to its orientation (recall Sections~\ref{Handlebody} and~\ref{thickened} for details). Since all the \(S\)-moves are of the same type, we do not need to check the signs of the other \(a_i\)'s and \(b_i\)'s of the same column. For this reason the computational cost of this part of the algorithm is linear in the number of the \(a_i\) and \(b_i\) generators. 

Then, cycling over every column, the number of type \(a, b, c\) and \(d\) crossings (see Fig.~\ref{fig:crossings}) is computed, and their type (over or under crossing) is memorized. Every crossing is then substituted by the appropriate \(C_a, C_b, C_c\) or \(C_d\) formula, checking for each crossing the signs of the other crossings in the same column, in order to use the right signs for the newly generated \(\sigma_i\)'s. For this reason the computational cost of the algorithm is quadratic in the number of the \(\sigma_i\) generators. In this part we also keep track of possible Reidemeister~I move necessities. In this eventuality, we will eliminate the extra crossing, obtaining a well-defined braid. 

Now, for every odd labeled column we deal with the top and bottom part management, following the rules in Section~\ref{top_bottom_management} and checking which one of the cases depicted in Figures~\ref{fig:top_part_1}, \ref{fig:top_part_2}, \ref{fig:top_part_3}, \ref{fig:top_part_4} and \ref{fig:bottom_part_1}, \ref{fig:bottom_part_2}, \ref{fig:bottom_part_3}, \ref{fig:bottom_part_4} occurs. 

Finally, the algorithm gives as output the new word associated to the resulting braid. Since every old generator was substituted in some way, the new word pieces are written following the old sequence of generators. 

The source files of the program can be found in the GitHub repository at \cite{cavicchioligithub2}.

\section{Conclusion}

In this paper we provided algorithms for passing from the plat closure representation of a braid to the standard closure and vice versa, retaining the isotopy class of the resulting link (be it in the classical setting, in a handlebody or in an thickened surface), and estimated the computational complexity.

Our next goals are, first to  construct a direct plat braiding algorithm and a Birman-type theorem concerning plat closures of braids in handlebodies, complementing the results of H\"aring and Lambropoulou for links represented as standard closures of  braids in handlebodies  \cite{haring2002knot}.  Cf.~\cite{handlebody}. Then, to construct a direct braiding algorithm for links represented as standard closures of braids in thickened surfaces and, subsequently, a Markov-type theorem in this setting, complementing the results of Cattabriga and Gabrov{\v{s}}ek for links represented as plat closures of surface braids \cite{cattabriga2018markov}. Cf.~\cite{nextone}. These two gaps in the literature of  knot theory will be the floor to our next steps, namely the generalization of our algorithm to the case of c.c.o. 3-manifolds. 
Let \(M\) be a c.c.o. 3-manifold. We note that a handlebody (or  Heegaard) decomposition   of \(M\) is naturally associated to the plat closure of braids, while a surgery description of \(M\) is naturally associated to the standard closure of braids. 
The two braid equivalence theorems for links in c.c.o. 3-manifolds via the standard closure  \cite{lambropoulou1997markov} and via the plat closure \cite{cattabriga2018markov} are directly related to the description of the 3-manifold. 
The study of the relation between the two  braid equivalence  theorems will be a further step.  

Our results could apply to enhance the computation of link invariants, such as the Jones polynomial, for knots and links in 3-manifolds. 

\bibliographystyle{plain}
\bibliography{main}

\end{document}